\newtheorem{lem}{Lemma}
\newtheorem{rem}{Remark}
\newtheorem{cor}{Corollary}
\newtheorem{thm}{Theorem}
\def\w{\omega}
\def\p{\partial}
\DeclareMathOperator{\supp}{supp}
\title{Finite time blow-up for the hypodissipative Navier Stokes equations with a force in $L^1_t C_x^{1,\epsilon}\cap L^{\infty}_{t} L^{2}_{x}$}
\author{Diego C\'ordoba\footnote{dcg@icmat.es, Instituto de Ciencias Matem\'aticas CSIC-UAM-UCM-UC3M  },\quad Luis Mart\'inez-Zoroa\footnote{luis.martinezzoroa@unibas.ch, University of Basel}\quad and Fan Zheng \footnote{fan.zheng@icmat.es, Instituto de Ciencias Matem\'aticas CSIC-UAM-UCM-UC3M  }}
\date{}
\pgfplotsset{compat=newest}
\begin{document}

\maketitle
\begin{abstract}
    In this work we establish the formation of singularities of classical solutions with finite energy of the forced fractional Navier Stokes equations where the dissipative term is given by $|\nabla|^{\alpha}$ for any $\alpha\in [0, \alpha_0)$ ($\alpha_0 = \frac{22-8\sqrt7}{9} > 0$). We construct solutions in $\mathbb{R}^3\times [0,T]$ with a finite $T>0$ and  with an external forcing which is in $L^1_t([0, T]) C_x^{1,\epsilon}\cap L^{\infty}_{t} L^{2}_{x}$, such that on the time interval $0 \le t < T$, the velocity $u$ is in the space $C^\infty\cap L^2$ and such that as the time $t$ approaches the blow-up moment $T$, the integral $\int_0^t |\nabla u| ds$ tends to infinity.  
\end{abstract}

\section{Introduction}
We consider the initial value problem for the fractional Navier-Stokes equations  in $\mathbb{R}^{3} $ :
\begin{equation}\label{fNS}
\begin{aligned}
u_t+u\cdot\nabla u + \nabla P   + \nu |\nabla|^{\alpha} u &= f, \quad (x,t) \in \mathbb{R}^{3} \times \mathbb{R}_+ \\
\quad\quad\quad \nabla \cdot u &= 0
\end{aligned}
\end{equation}
with initial data $u_0= u(x,0)$, where
\begin{align*}
    u &=(u_1(x,t), u_2(x,t), u_3(x,t))\text{ is the velocity field},\\
    P&=P(x,t)\text{ is the pressure function},\\
    \nu&>0\text{ is the viscosity coefficient},\\
    f&=(f_1(x,t), f_2(x,t), f_3(x,t))\text{ is an external force}.
\end{align*}
 We let $|\nabla|^{\alpha} g\equiv (-\Delta)^{\frac{\alpha}{2}} g$ be the Fourier multiplier $\widehat{|\nabla|^{\alpha} g}(\xi) = |\xi|^{\alpha}\widehat{g} (\xi)$. 
 
When $\alpha=2$, the equations align with the well-recognized classical Navier-Stokes equations. If $0<\alpha<2$, these equations are called hypodissipative Navier-Stokes equations, indicating their reduced kinetic energy dissipation compared to the complete Laplacian $\Delta$. From a physical perspective, this model characterizes fluids that exhibit internal friction as described in \cite{MGSG} and has also been derived using a stochastic Lagrangian particle methodology according to \cite{Z}. On the other hand, if $\alpha>2$, the equations are considered hyperdissipative. When $\nu=0$ we have the incompressible Euler equations.

We can deduce their corresponding energy equality
$$\frac12\int_{\mathbb{R}^{3}} |u(x,t)|^2 dx + \nu\int_0^t\int_{\mathbb{R}^{3}} ||\nabla|^{\alpha/2} u(x,s)|^2 dxds   = \frac12\int_{\mathbb{R}^{3}} |u(x,0)|^2 dx +\int_0^t\int_{\mathbb{R}^{3}} u(x,s)f(x,s) dxds$$ for smooth solutions by multiplying the initial equation by u and performing integration by parts.
Also, by taking the curl of the equation we get its vorticity formulation
\begin{equation}\label{vorticity}
\omega_t+u\cdot\nabla\omega + \nu |\nabla|^{\alpha} \omega = \nabla \times f + \omega \cdot \nabla u
\end{equation}
where $u$ can now be recovered from $\omega$ using the Biot--Savart law
\[
u(x) = \omega * \mathcal K(x) = \frac1{4\pi}\int_{\mathbb R^3} \frac{\omega(y) \times (x - y)}{|x - y|^3}dy.
\]
where $\mathcal K$ is the Biot--Savart kernel.

The term $\omega \cdot \nabla u$ on the right-hand side means that in addition to being transported by the velocity,
the vorticity itself will be stretched by the deformation caused by the flow; hence its name ``the vorticity stretching term".
This term is peculiar to three dimensions: in two dimensions the vorticity always points perpendicularly to the flow,
so the vorticity stretching term vanishes. This explains the stark contrast in the global regularity theory in two and three dimensions:
In two dimensions global well-posedness is well known \cite{Hol,wol}, while in three dimensions the corresponding problem is the famous
 Millennium Prize Problems \cite{Fef}: If the initial velocity $u_0$ is smooth and the external force $f$ is uniformly smooth, does a smooth solution to these equations exist for all time, or is there a finite time singularity? This question is a cornerstone of mathematical fluid dynamics and remains unsolved, stimulating extensive research to shed light on the potential existence or non-existence of singularities.

Following the groundbreaking work by J. Leray \cite{Le}, a wealth of extensive literature has been developed on the existence theory for solutions of the Navier-Stokes equations. A significant advancement in regularity theory is the renowned partial-regularity theory developed by Caffarelli-Kohn-Nirenberg \cite{CKN}, who proved that the 1-dimensional parabolic Hausdorff measure of the singular set (the points $(x, t)$ where $u$ is unbounded in any neighborhood of $(x, t)$ is zero) for every appropriate weak solution. More recently,  this theory has been expanded in \cite{TY} to the hypodissipative range $\frac32 < \alpha < 2$ by proving that the $(5 - 2\alpha)$-dimensional Hausdorff measure of the singular set is zero for every fitting weak solution.  In the scenario of hyperdissipation, where $2 < \alpha \leq \frac52$, a parallel achievement has been recently shown in \cite{CDM} (see also  \cite{KP} and \cite{O}).

The balance of energy in the hyperdissipative Navier-Stokes equation reaches a critical scale at $\alpha = \frac52$. For this equation, global regularity is maintained for all solutions when $\alpha\geq\frac52$ (see the work of Lions \cite{L}), extending logarithmically into the range beyond the critical threshold (refer to \cite{BMR}, \cite{FFZ} and \cite{T1}). Additionally, as demonstrated in \cite{CH}, given any specific initial condition, $u_0$, belonging to the space $H^{\delta}$ with $\delta > 0$, such initial condition $u_0$ ensures the emergence of a globally smooth solution whenever $\alpha$ exceeds $\frac52 - \epsilon$, with $\epsilon$ being a small positive number determined solely by the $H^{\delta}$ magnitude of the initial condition.

Three significant results concerning the propagation of regularity for Navier-Stokes equations are the Prodi-Serrin criteria on the integrability of the velocity (see also \cite{ESS} and reference \cite{RR} for details and reviews on the criteria), the criterion on the direction field of the vorticity by Constantin-Fefferman in \cite{CF} and the Seregin-Sverak criteria in \cite{SS} on the lower bound of the pressure. These have been extensively extended in various studies to the fractional Navier-Stokes equation (see \cite{BG}, \cite{Ch}, \cite{W} and \cite{Kim} for further details and references therein). Conversely, there exist models of the fractional and classical Navier-Stokes equations that develop finite time singularities, in particular see \cite{T2} by Tao for a modified averaged Navier–Stokes system and  for a hypodissipative dyadic model see both \cite{KP2} by Katz-Pavlovic and \cite{Ches} by Cheskidov (see also \cite{ChDF} and \cite{EM}). Very recently a potential candidate for a finite time singularity for the classical Navier-Stokes equations has been presented in \cite{Hou5} by Hou.

The goal of this paper is to construct classical solutions in the well-posed regime of the hypodissipative Navier-Stokes equations that loses regularity in finite time. The main theorem is:     

\begin{thm}\label{thm}
There is $\alpha_0 = \frac{22-8\sqrt7}{9} > 0$ such that for any $\alpha\in [0, \alpha_0)$, there exist  solutions of the forced 3D incompressible fractional Navier-Stokes equations \eqref{fNS} in $\mathbb{R}^3\times [0, 1]$ with $\nu > 0$
\footnote{We will actually show the theorem for $\nu = 0.01$, but a rescaling argument shows the result for all $\nu > 0$.}
and an external forcing which is in $L^1_t([0, 1]) C_x^{1,\epsilon}\cap L^{\infty}_{t}L^{2}_{x}$ for some $\epsilon>0$, such that the velocity $u \in L_{t,x}^\infty \cap L_t^\infty L_x^2$,
and that on the time interval $0 \le t < 1$, the velocity $u$ is  smooth in $x$ and satisfies $$\lim_{t\rightarrow 1}\int_0^t |\nabla u (x,s)|_{L^{\infty}} ds = \infty.$$ 
\end{thm}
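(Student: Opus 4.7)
I would use a pseudo-solution construction: prescribe $u$ explicitly and \emph{define} the forcing $f := \partial_t u + u \cdot \nabla u + \nabla P + \nu |\nabla|^\alpha u$ as the residual making $u$ satisfy \eqref{fNS}. The task then reduces to designing a smooth divergence-free $u$ with bounded $L^\infty_{t,x} \cap L^\infty_t L^2_x$ norms, with $\int_0^t \|\nabla u(\cdot, s)\|_{L^\infty}\,ds \nearrow \infty$ as $t \nearrow 1$, and with residual $f \in L^1_t C^{1,\epsilon}_x \cap L^2_{t,x}$.

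\textbf{Why a cascade is needed.} A single self-similar profile $u(x,t) = U((x - x_*)/(T - t))$ does not work: even if $U$ is a stationary Euler solution so that $u\cdot\nabla u + \nabla P \equiv 0$ for a suitably defined pressure, a direct calculation gives $\|\partial_t u\|_{C^{1,\epsilon}_x} \sim (T-t)^{-2 - \epsilon}$, whose time integral over $[0, T)$ is infinite. I would therefore construct $u$ as a cascade $u(x,t) = \sum_{n \ge 0} u_n(x,t)$ of well-separated bumps, where $u_n$ is active on a time window $I_n = [T_n, T_{n+1}]$ with $T_n \nearrow 1$, supported near $x_n \in \mathbb{R}^3$ on a spatial scale $r_n = R^{-n} \to 0$, of amplitude $a_n$, and designed so that during $I_n$ the bump is a (near-)exact solution of the Euler equation at its own scale. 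The transition from $u_n$ to $u_{n+1}$ must be driven by nonlinear advection rather than by the external force; constructing building blocks with this cancellation is the conceptual core of the construction.

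\textbf{Residual estimates and the threshold $\alpha_0$.} I would decompose $f$ into (i) self-residuals from the imperfect Euler approximation of each $u_n$, (ii) cross-terms $u_n \cdot \nabla u_m$ for $n \ne m$ together with the nonlocal pressure couplings they produce (controlled via spatial separation of the supports and Calder\'on--Zygmund estimates applied to the Leray projector), and (iii) the dissipative contribution $\nu |\nabla|^\alpha u_n$. In the geometric ansatz with $\tau_n = T_{n+1} - T_n = R^{-\beta n}$ each piece produces a geometric series in $n$, and finiteness of the $L^1_t C^{1,\epsilon}_x$ and $L^2_{t,x}$ norms of $f$ imposes several linear inequalities in the exponents governing $(R, \beta, a_n)$. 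In particular, the dissipation bound $\|\,|\nabla|^\alpha u_n\|_{C^{1,\epsilon}_x} \lesssim a_n r_n^{-1-\alpha-\epsilon}$ is in direct tension with the BKM condition $\sum_n a_n r_n^{-1} \tau_n = \infty$. Solving the resulting linear program over all admissible exponents should yield a nonempty feasible set precisely when $\alpha < \alpha_0 = (22 - 8\sqrt{7})/9$, which matches the threshold in the statement.

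\textbf{Main obstacle.} The hardest part is the first step: designing building blocks $u_n$ that are sufficiently exact Euler solutions on their time windows so that the self-residuals meet the strong $L^1_t C^{1,\epsilon}_x$ requirement. Simultaneously, the cross-interactions --- and in particular the pressure generated by each $u_n$, which is nonlocal and does not remain supported near $x_n$ --- must be controlled in Hölder norm across all scales, and the BKM divergence and $L^2_{t,x}$ condition on the forcing must be reconciled (they push the exponents in opposite directions). Carrying out this bookkeeping carefully and verifying that the constraint system collapses exactly at $\alpha = \alpha_0$ (rather than at a weaker threshold) is where the bulk of the technical work lies.
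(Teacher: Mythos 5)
Your overall frame---prescribe the solution, define the force as the residual, use a cascade of scales, and close a system of exponent inequalities---does match the paper's philosophy, but the proposal omits the two ingredients that constitute the actual proof, and one of your structural choices points in a direction that likely fails. First, the amplification mechanism is left as an acknowledged black box (``constructing building blocks with this cancellation is the conceptual core''). The paper's answer is specific: all layers are nested at the origin rather than placed at well-separated points $x_n$; the vorticity $\omega_n$ is an oscillatory profile aligned with $e_{n+1}$ sitting inside the \emph{linear} (saddle-point) part of the velocity generated by $\omega_0,\dots,\omega_{n-1}$, cf.\ \eqref{Un-lin}, and it grows through the vorticity stretching term $\omega\cdot\nabla u$; the sinusoidal structure is chosen so that the quadratic self-interaction cancels to leading order (Lemma \ref{quadratic}). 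With well-separated bumps the velocity induced by $u_n$ near $x_{n+1}$ is weak and nearly constant there, so it transports rather than stretches, and it is unclear how one bump would amplify the next. Moreover the paper does \emph{not} hand off between scales by nonlinear advection, as you require: each $\omega_n$ is switched on purely by the force at time $T_n$, at an amplitude small enough ($A_nM_n^s/K_n\le1$) that this is admissible in $L^1_tC^s_x$, and only its subsequent growth is dynamical.

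Second, the claim that the exponent linear program ``should yield a nonempty feasible set precisely when $\alpha<\alpha_0=(22-8\sqrt7)/9$'' assumes the conclusion. The threshold is not generic bookkeeping; it arises from a second-order linear recurrence among the frequencies $M_n$, namely \eqref{recurrence}, whose solvability with $\ln M_n=R^n$ requires the discriminant condition $(2s-1-\alpha)^2\ge12\alpha$, and, after the full error accounting (self-interaction, inner--outer interactions, dissipation), leads to the choice $s=(3\alpha+2-2\sqrt{14\alpha})/4$; the requirement $s>0$ is equivalent to $9\alpha^2-44\alpha+4>0$, whose smaller root is exactly $\alpha_0$. In particular the frequencies must grow double-exponentially ($M_n=N^{R^n}$ with $R=\sqrt{2/(7\alpha)}$), not geometrically as in your ansatz $r_n=R^{-n}$: the paper notes explicitly that exponential separation of scales ``fails to produce any meaningful growth in the presence of diffusion of any order.'' Without identifying this recurrence and the super-geometric scaling it forces, the proposal cannot locate $\alpha_0$, nor even establish that the feasible set is nonempty for any $\alpha>0$.
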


\begin{rem}
    Note that, using standard methods, there is local existence for hypodissipative Navier-Stokes in the spaces we are considering (see for example \cite{D}).
\end{rem}

\subsection{Previous results on finite time singularities for 3D incompressible Euler equations.}
Recently different scenarios of blow-up has been constructed within local well-posed regime ($C^{1,\beta}$) in the inviscid case ($\nu=0$): the 3D incompressible Euler equations. More precisely, there are 4 known scenarios of blow-up with finite energy and no boundaries:
\begin{itemize}
    \item Elgindi pioneered the first demonstration of finite-time singularities in Euler equations through his research \cite{Elgindi2}. He analyzed flows with axi-symmetric properties and no swirl, focusing on velocity profiles within the $C^{1,\beta }$ H\"older space, where $\beta$ is chosen to be a very small number, and which are $C^{\infty}$ outside of a zero measure set. His findings revealed the possibility of exact self-similar blow-up solutions that possess infinite energy and are not influenced by external forces. It's important to highlight that by applying a consistent non-zero $C^{1,\beta}$ external force, one can also derive solutions that blow up yet have finite energy, as Elgindi mentions in Remark 1.4 of his paper \cite{Elgindi2}. In subsequent research \cite{Elgindi3}, Elgindi, along with Ghoul and Masmoudi, expanded on these results, showing that such self-similar blow-up solutions could be localized. By examining the stability of these solutions, they confirmed the emergence of finite-time singularities in the Euler equations without any external forces ($f = 0$), specifically for solutions with finite energy in the $C^{1,\beta}$ framework. Recent progress has led to significant advancements and computer assisted proofs in the study of self-similar singularities in axi-symmetric flows when boundaries are present (refer to  \cite{EJ},  \cite{Hou},  \cite{Hou2} and \cite{Hou4}).
    \item In our recent work \cite{CMZ}, we introduced a unique blowup mechanism in $\mathbb{R}^3$  that diverges from the typical self-similar profile. We have constructed solutions for the 3D unforced incompressible Euler equations over the domain $\mathbb{R}^3\times [-T,0]$, where the velocity profiles lie within the function space $C^{\infty}(\mathbb{R}^3 \setminus {0})\cap C^{1,\beta}\cap L^2$, where $0<\beta\ll\frac13$, for time intervals $t\in (-T,0)$. These solutions are smooth everywhere except at a singular point and develop finite-time singularities at $t=0$ (see also \cite{Che2}, where a self-similar singularity smooth outside of a point is constructed). Remarkably, while these solutions are axi-symmetric without swirl, they do not rely on self-similar profiles. They are made up instead of a sequence of vorticity regions increasingly closer to the origin, creating a hyperbolic saddle at the center. This configuration induces motion and deformation, affecting the inner vortices. Thus, the Euler equations' dynamics under this scheme can be approximated by a solvable infinite system of ordinary differential equations, offering a precise backward-in-time analysis of the dynamics leading to the blow-up, thereby enabling a detailed proof of this singular behavior.
    \item In the recent study \cite{CM}, we introduce a blow-up mechanism for the forced 3D incompressible Euler equations, specifically targeting non-axisymmetric solutions. We develop solutions in $\mathbb{R}^3$ within the function space $C^{3,\frac{1}{2}}\cap L^2$ over the time interval $[0, T)$, where $T > 0$ is a finite value, and these solutions are subject to a continuous force in $C^{1,\frac{1}{2} -\epsilon}\cap L^2$. Notably, this approach leads to a blow-up: as time $t$ nears the critical moment $T$, the integral $\int_0^t |\nabla u| ds$ diverges, indicating infinite growth, yet the solution maintains smoothness everywhere except at the origin. Our blow-up construction does not employ self-similar coordinates and successfully addresses solutions beyond the $C^{1,\frac{1}{3}+}$ regularity, which is critical for axi-symmetric solutions without swirl.
    \item Very recently, Elgindi and Pasqualotto consider the setting of axi-symmetric flows with swirl. They proved in \cite{EP} a  self-similar profile blow-up for solutions in $C^{1, \beta}$  where the singularity happens away from the origin. The solutions considered actually live in $H^{2+\delta}$, and therefore they are above the critical regularity for axi-symmetric solutions in a stronger sense than the usual $C^{1,\beta}$ solutions considered in other results. The strategy requires a computer assisted proof in order to prove the invertibility of a certain linear operator.
     \end{itemize}

\subsection{Details about the proof}
\subsubsection{Strategy of the proof}
We use a similar strategy as the one considered in \cite{CM}. Namely, we consider solutions composed of an infinite number of compactly supported smooth vorticities
$$\sum_{n=0}^{\infty}\w_{n}(x,t)$$
which we call vortex layers, fulfilling that, if $n_{2}<n_{1}$, then $\w_{n_{1}}$ is much more concentrated around the origin than $\w_{n_{2}}$. This allows us to obtain a good approximation for the interaction between different layers by using, for example, Taylor series expansions.
This allows us to approximate the interactions between outer velocity and inner vorticity as
\begin{equation}\label{approx1}
\begin{aligned}
    \sum_{j=0}^{k-1} (\w_{j}* \mathcal K)\cdot\nabla \w_{k}&\approx U_{k}\cdot\nabla \w_{k},\\
    \w_{k}\cdot\nabla \sum_{j=0}^{k-1} \w_{j} * \mathcal K&\approx \w_{k}\cdot\nabla U_{k}
\end{aligned}
\end{equation}
where $U_{k}$ is a a useful approximation for $\sum_{j=0}^{k-1}u(\w_{j})$.
The interaction between inner velocity and outer vorticity is negligible,
because the inner vorticity exhibits a larger cancellation
that makes the velocity it generates very small far away from the inner layer:
\begin{equation}\label{approx2}
  \sum_{j=0}^{k-1}\w_{j}\cdot\nabla (\w_{k} * \mathcal K)\text{ and }(\w_{k} * \mathcal K)\cdot\nabla \sum_{j=0}^{k-1}\w_{j}\approx 0.  
\end{equation}

Furthermore, we will choose the $\w_{k}$ so that the self-interactions cancel to first order, so that
\begin{equation}\label{approx3}
    (\w_{k} * \mathcal K)\cdot\nabla\w_{k}\text{ and }\w_{k}\cdot\nabla (\w_{k} * \mathcal K)\approx 0.
\end{equation}
If we, for now, ignore the contribution of the fractional dissipation,
the approximations \eqref{approx1}, \eqref{approx2} and \eqref{approx3} transforms the equation \eqref{vorticity} to
\begin{equation}\label{wk}
    \p_{t}\w_{k}+U_{k}\cdot\nabla\w_{k}=\w\cdot\nabla U_{k}
\end{equation}
which can be also written as
\begin{equation}\label{wkF}
   \p_{t}\w_{k}+\sum_{j=0}^k (\w_{j} * \mathcal K)\cdot\nabla \w_{k}+(\w_{k} * \mathcal K)\cdot\nabla \sum_{j=0}^k \w_{j}
   =\w_{k}\cdot\nabla \sum_{j=0}^k (\w_{j} * \mathcal K)+\sum_{j=0}^k \w_{j}\cdot\nabla (\w_{k} * \mathcal K)+F_k 
\end{equation}
where the force is compensating the errors we created in our approximations. Note that, if we add all the equations \eqref{wkF}, we obtain a solution to forced Euler. Furthermore, if the $U_{k}$ are chosen appropriately, \eqref{wk} can be solved explicitly. We can then use this simplified evolution equations to construct the blow-up.

Note that, \textit{a priori}, the forces $F_{k}$ will have very bad regularity, and we can only hope that they belong to some well-posedness class if the approximations we have chosen for our evolution equation are very good. This is specially challenging considering the approximations need to remain valid along the whole evolution of the layers. Furthermore, since we are dealing with fractional Navier-Stokes, we also need to obtain useful approximations for $|\nabla|^{\alpha}\w_{k}$, specifically approximations that still allow us to solve our simplified model explicitly, that do not worsen the other approximations we are considering, and that do not change the qualitative behaviour of our solutions.
\subsubsection{Challenges in the proof}
As mentioned in the last subsection, this paper considers a similar scenario for the blow-up as in \cite{CM}, where we have an infinite number of vortex layers, each one with very different scales, and lower frequency vortex layers produce growth in the higher frequency ones. This construction, when considering carefully chosen vortex layers, allows for some useful cancellation in the interaction between layers, which gives us a first-order evolution equation.

However, the inclusion of diffusion makes the problem significantly more challenging. First, the diffusion is actively opposing the growth of the vorticity, and in order to overcome this, we need to consider solutions that blow-up in a stronger sense, namely, when considering fractional diffusion of order $\alpha$, the regularity of our velocity at the time of blow-up is less than $C^{1-\alpha}$ (in contrast, in \cite{CM}, we can take the regularity at the time of the blow-up arbitrarily close to $C^1$). This makes the behaviour of our solutions more singular, and therefore harder to control, and is specially concerning when dealing with the self-interactions of the vortex layers, where, in principle, we would get a self-interaction well below the well-posedness regime.

Furthermore, the diffusion also restrict our choices regarding the frequencies of the different layers. In particular, the construction employed in \cite{CM}, with exponential separation of the frequencies, fails to produce any meaningful growth in the presence of diffusion of any order. Not only that, but choosing frequencies that are consistent with growth in the presence of diffusion worsens the interactions between different layers, creating errors of higher order.

Finally, we need to obtain approximations for the fractional diffusion that we can include in our first order evolution without changing the structure of our solutions; otherwise this would interfere with the rest of the bounds for the evolution.
\subsubsection{Vortex layers}
Our blow-up solution consists of infinitely many vortex layers, each greater in magnitude, closer to the origin and existing for a shorter perioud than the previous one. Specifically, our vorticity $\omega$ is an infinite sum of $\omega_n$ which is nonzero only on the time interval $[T_n, 1]$. At the time of blow-up,
\begin{equation}\label{om-n}
\omega_n \approx A_n\psi(L_nx_1)\psi(L_nx_2)\psi(L_nx_3)\sin(M_nx_{n+2})\sin(M_nx_{n+3})e_{n+1}
\end{equation}
\footnote{The right-hand side is not divergence free, so we need some $e_{n+3}$ component to fix that,
which is small compared to the main term because of the oscillations in $x_{n+1}$ and $x_{n+3}$ have different frequencies.
}
where $A_n$, $M_n$ and $L_n \gg 1$, $\psi$ is a cut-off function and the subscripts are taken modulo 3.
Then the velocity field generated by $\omega_0, \dots, \omega_{n-1}$, especially the one generated by $\omega_{n-1}$:
\[
\omega_{n-1} \approx A_{n-1}\psi(L_{n-1}x_1)\psi(L_{n-1}x_2)\psi(L_{n-1}x_3)\sin(M_{n-1}x_{n+1})\sin(M_{n-1}x_{n+2})e_n,
\]
is roughly a constant times
\begin{align*}
U_n&\approx (A_{n-1}/M_{n-1})\psi(L_{n-1}x_1)\psi(L_{n-1}x_2)\psi(L_{n-1}x_3)\sin(M_{n-1}x_{n+1})\cos(M_{n-1}x_{n+2})e_{n+1}\\
&-(A_{n-1}/M_{n-1})\psi(L_{n-1}x_1)\psi(L_{n-1}x_2)\psi(L_{n-1}x_3)\cos(M_{n-1}x_{n+1})\sin(M_{n-1}x_{n+2})e_{n+2}.
\end{align*}
We will arrange that $L_n \gg L_{n-1}$, so $\omega_n$ is supported much closer to the origin than $\omega_{n-1}$.
Then the velocity felt by $\omega_n$ has a first order approximation near the origin:
\begin{equation}\label{Un-lin}
U_n \approx A_{n-1}x_{n+1}e_{n+1} - A_{n-1}x_{n+2}e_{n+2}.
\end{equation}
This is a saddle point, whose effect is stretching the $x_{n+1}$ direction while compressing the $x_{n+2}$ direction.
Thanks to the vorticity stretching term $\omega \cdot \nabla u$ in the vorticity formulation \eqref{vorticity},
$\omega_n$, whose main term has been made to align with $e_{n+1}$, is thus stretched and leads to the blow-up.

\subsubsection{Constrained choice of the parameters}
The choice of the parameters are constrained by a number of factors.
First, the stretching caused by the outer vortex layers to $\omega_n$
is of magnitude $A_{n-1}$. For the blow-up to happen, the stretching should beat the dissipation,
which is of magnitude $M_n^\alpha$, so we have our first constraint:
\[
M_n^\alpha \le A_{n-1}.
\]
Assume that $\omega_n$ is stretched by a factor of $K_n$ during the process.
Then when it first appears, it has the magnitude of $A_n/K_n$,
its frequency in the $x_{n+2}$ direction is $M_n/K_n$,
while its frequency in the $x_{n+3}$ direction is still $M_n$,
so its $C^s$ norm is then $A_nM_n^s/K_n$.
We will switch on $\omega_n$ purely by force,
which should remain bounded in the $C^s$ norm, so our second constraint is
\[
A_nM_n^s/K_n \le 1.
\]
In addition, for the approximation \eqref{Un-lin} to work,
the support of $\omega_n$ should not exceed the wavelength of $\omega_{n-1}$,
so that we can use the approximation $\sin x \sim x$.
This should hold throughout the period when $\omega_n$ exists,
especially at the beginning, when its support has size $K_n/L_n$ in one of the directions. Hence
\[
K_n/L_n \le 1/M_{n-1}.
\]
Finally, when $\omega_n$ evolves, the main term of error comes from its self-interaction,
which is corrected by force. Since $\omega_n$ has magnitude $A_n$,
the $C^s$ norm of the quadratic terms $u \cdot \nabla\omega$ and $\omega \cdot \nabla u$ are of sizes
$A_n^2M_n^s$. However, the sinusodial shape of the vorticity makes main terms cancel out (see Lemma \ref{quadratic} for details).
The error of the next order is smaller by a factor of $M_n/L_n$, i.e., of size $A_n^2L_nM_n^{s-1}$.
Because the strength of stretching per unit time is $A_{n-1}$, $\omega_n$ exists for a time period $1/A_{n-1}$,
so in order to bound the $L_t^1C_x^s$ norm of the error, we must have
\[
A_n^2L_nM_n^{s-1}/A_{n-1} \le 1.
\]

Solving the constraints one by one we get
\[
K_n \ge A_nM_n^s, \quad L_n \ge K_nM_{n-1} \ge A_nM_n^sM_{n-1}
\]
and finally
\[
A_n^3M_n^{2s-1}M_{n-1} \le A_{n-1}.
\]
It is now reasonable to take the equality in the constraint $M_n^\alpha \le A_{n-1}$. Then we get
\[
M_{n+1}^{3\alpha}M_n^{2s-1-\alpha}M_{n-1} \le 1
\]
or
\begin{equation}\label{recurrence}
3\alpha\ln M_{n+1} + (2s-1-\alpha)\ln M_n + \ln M_{n-1} \le 0.
\end{equation}
Since this is a linear recurrence inequality in terms of $\ln M_n$, it makes sense to take $\ln M_n = R^n$,
for which the characteristic equation must have a real root, so
\[
\Delta = (2s-1-\alpha)^2 - 12\alpha \ge 0.
\]
We must have $2s-1-\alpha<0$, otherwise the left-hand side of \eqref{recurrence} is positive (because $M_n \gg 1$), so
\[
s \le s(\alpha) := (1 + \alpha)/2 - \sqrt{3\alpha}
\]
so if there are $\alpha$ derivatives in the dissipation, the forcing upon $\omega_n$ can be controlled in the space $L_1^tC_x^{s(\alpha)-}$.
Now all we need is $s(\alpha) > 0$, which holds if $\alpha$ is a small positive number.
Note that this is only a back-of-the-envelope calculation:
in reality there are other sources of error getting in the way.

\subsubsection{Symmetry of the construction}
In our construction, the vorticity and the velocity it generates satisfies certain symmetry with respect to reflections across coordinate planes.
We say that a vector field $V(x)$ is \textbf{polar} if for all $j = 1, 2, 3$, $V_j(x)$ is odd in $x_j$,
and that $V(x)$ is \textbf{axial} if for all $j = 1, 2, 3$, $V_j(x)$ is even in $x_j$ but odd in $x_k$ for all $k \neq j$.
The vorticity of our blow-up will thus be axial and by the Biot--Savart law, the velocity it generates is polar.

\subsection{Outline of the paper}
In Section \ref{flowmap} we study how the flow generated by (a more delicate version of) \eqref{Un-lin} deforms the vortex layer $\omega_n$.
In Section \ref{om}, we estimate how this deformation introduces various errors in the linear and bilinear terms.
In Section \ref{construction}, we spell out the construction itself and show that all the errors are under control.

\section{Flow map estimates}\label{flowmap}
The velocity $U$ that the outer vortex layers generate is polar, meaning that $U_j$ is odd in $x_j$ ($j = 1, 2, 3$),
but we will not use $U$ itself, but the following approximation to generate the flow that transports the inner vortex layers:
\[
(x_1\partial_1U_1(0, x_2, 0, t), U_2(0, x_2, 0, t), x_3\partial_3U_3(0, x_2, 0, t)).
\]
In this section we study the flow generated by this field.
Note that the $e_2$ component only depends on $x_2$,
and the $e_j$ component ($j = 1, 3$) is linear in $x_j$;
thus the flow can be solved explicitly, first in the $e_2$ component, then in the other components.

\begin{lem}\label{1DODE}
Let $u(\cdot, t)$ be a smooth scalar function that is odd in the first variable.
Assume that $x \in \mathbb R$ and that for all $t \ge 0$, $x^2\sup|\partial_1^3u(\cdot, t)| \le -\partial_1u(0, t)$.
Then the solution to
\[
\phi(x, 0) = x, \quad \partial_t\phi(x, t) = u(\phi(x, t), t)
\]
exists for all $t \ge 0$ and satisfies:

(i) $|\phi(x, t)| \le |x|$.

(ii) $\phi(x, t)$ has the sign of $x$.
    
(iii) $|\phi(x, t)| \le |x|\exp\int_0^t (\partial_1u(0, s) + \frac{x^2\sup|\partial_1^3u(\cdot, s)|}6)ds$.
   
(iv) $|\phi(x, t)| \ge |x|\exp\int_0^t (\partial_1u(0, s) - \frac{x^2\sup|\partial_1^3u(\cdot, s)|}6)ds$.
    
(v) $\partial_x\phi(x, t) \le \exp\int_0^t (\partial_1u(0, s) + \frac{x^2\sup|\partial_1^3u(\cdot, s)|}2)ds \le 1$.
    
(vi) $\partial_x\phi(x, t) \ge \exp\int_0^t (\partial_1u(0, s) - \frac{x^2\sup|\partial_1^3u(\cdot, s)|}2)ds$.
    
(vii) $|\partial_{xx}\phi(x, t)| \le |x|\int_0^t \sup|\partial_1^3u(\cdot, s)|ds$.
\end{lem}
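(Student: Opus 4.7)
The plan is to exploit the fact that $u$ is odd in its first variable, so that $u(0,t)=0$ and $\partial_1^2 u(0,t)=0$, which means Taylor expansion of $u(y,t)$ around $y=0$ has only a linear term plus a cubic remainder. The hypothesis then says that the linear part $\partial_1 u(0,t)\le 0$ dominates the cubic correction on the interval $[-|x|,|x|]$. Everything will reduce to a bootstrap: as long as $|\phi(x,t)|\le|x|$, all the required bounds hold, and those bounds in turn imply $|\phi|\le|x|$.

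For parts (i) and (ii), I would introduce $E(t)=\phi(x,t)^2$, compute $E'(t)=2\phi\,u(\phi,t)$, and use the Taylor expansion $u(y,t)=\partial_1u(0,t)\,y+\tfrac16\partial_1^3u(\xi,t)\,y^3$ (the quadratic term vanishes by oddness). As long as $|\phi|\le|x|$, the cubic remainder is bounded by $\tfrac{1}{6}\phi^2 x^2\sup|\partial_1^3 u|\le -\tfrac16\phi^2\partial_1u(0,t)$, so $E'(t)\le \tfrac{5}{3}\partial_1u(0,t)\,E(t)\le 0$. This gives (i) by continuation; (ii) follows because $\phi\equiv 0$ is a solution and solutions to the ODE are unique.

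For (iii)--(iv), using the same Taylor estimate I get the two-sided bound
\begin{equation*}
2\partial_1u(0,t)-\tfrac13 x^2\sup|\partial_1^3u(\cdot,t)|\ \le\ \frac{d}{dt}\log E(t)\ \le\ 2\partial_1u(0,t)+\tfrac13 x^2\sup|\partial_1^3u(\cdot,t)|,
\end{equation*}
and integrating and taking a square root yields (iii) and (iv). For (v)--(vi), I differentiate the ODE in $x$ to get the linear equation $\partial_t(\partial_x\phi)=\partial_1u(\phi,t)\,\partial_x\phi$ with initial data $1$, so $\partial_x\phi(x,t)=\exp\int_0^t\partial_1u(\phi(x,s),s)\,ds$. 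Since $\partial_1 u$ is even in its first variable, its second $x$-derivative vanishes at $0$ and Taylor gives $|\partial_1u(\phi,s)-\partial_1u(0,s)|\le\tfrac12\phi^2\sup|\partial_1^3u|\le \tfrac12 x^2\sup|\partial_1^3u|$; this proves (vi) and the first inequality of (v). For the second inequality of (v), the hypothesis lets me bound $\partial_1 u(0,s)+\tfrac12 x^2\sup|\partial_1^3u|\le \tfrac12\partial_1 u(0,s)\le 0$.

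Finally for (vii), differentiating once more yields the linear ODE $\partial_t\partial_{xx}\phi=\partial_1u(\phi,t)\partial_{xx}\phi+\partial_1^2u(\phi,t)(\partial_x\phi)^2$ with zero initial data, solved by Duhamel as
\begin{equation*}
\partial_{xx}\phi(x,t)=\int_0^t\exp\!\Bigl(\int_s^t\partial_1u(\phi(x,\tau),\tau)\,d\tau\Bigr)\,\partial_1^2u(\phi(x,s),s)\,(\partial_x\phi(x,s))^2\,ds.
\end{equation*}
The exponential factor is $\le 1$ by the computation in (v); $(\partial_x\phi)^2\le 1$ by (v); and since $\partial_1^2u(0,s)=0$, one more Taylor step gives $|\partial_1^2 u(\phi,s)|\le|\phi|\sup|\partial_1^3 u|\le|x|\sup|\partial_1^3u|$. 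Assembling these bounds gives (vii). The only delicate point, and the one I would write out most carefully, is the bootstrap that justifies using $|\phi|\le|x|$ in every estimate; but because it is driven by the Gr\"onwall-type inequality for $\phi^2$ together with the smallness hypothesis on the cubic term, the continuation argument is routine.
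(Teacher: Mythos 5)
Your proposal is correct and follows essentially the same route as the paper: Taylor expansion about $0$ exploiting oddness, a continuation/bootstrap argument for $|\phi|\le|x|$, Gr\"onwall for $\phi^2$, and differentiation of the ODE in $x$ for (v)--(vii). The only cosmetic differences are that the paper runs the continuation argument on the larger interval $(-2|x|,2|x|)$ (which makes the "strict improvement" step of the bootstrap explicit, the one point you flag as delicate) and proves (vii) by a sign argument on $\partial_t|\partial_{xx}\phi|$ rather than Duhamel's formula.
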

\begin{proof}
(i) For $x = 0$, the solution exists and equals 0 for all time. Now we suppose that $x \neq 0$.
Let $I$ be the maximal interval in $\mathbb R^+$ on which the solution exists and stays in $(-2|x|,2|x|)$.
Since $\phi(x, 0) = x$, by continuity, $I$ is non-empty.
For $t \in I$, by Taylor's theorem with the Lagrange remainder and the hypothesis on $\sup|\partial_1^3u(\cdot, t)|$,
\begin{align*}
|u(\phi(x, t), t) - \partial_1u(0, t)\phi(x, t)|
&\le \frac{|\phi(x, t)|^3}6\sup|\partial_1^3u(\cdot, t)|\\
&\le \frac{|\phi(x, t)|^3}6\frac{|\partial_1u(0, t)|}{x^2}
\le \frac{2|\partial_1u(0, t)\phi(x, t)|}{3}
\end{align*}
so $\partial_t\phi(x, t) = u(\phi(x, t), t)$ has the sign of $\partial_1u(0, t)\phi(x, t)$,
or that of $-\phi(x, t)$ (because $\partial_1u(0, t) \le 0$), so $|\phi(x, t)| \le |\phi(0, t)| \le |x|$.
If $I \neq \mathbb R$, then by local existence, the solution can be extended beyond $I$
while still staying in $(-2|x|,2|x|)$. Hence the solution exists for all $t \ge 0$,
and satisfies $|\phi(t)| \le |x|$.

(ii) Since $u$ is smooth, by the uniqueness theorem,
any solution with $x \neq 0$ does not reach 0,
so by continuity, $\phi(t)$ has the sign of $x$.

(iii) and (iv) By Taylor's theorem with the Lagrange remainder,
\begin{align*}
|u(\phi(x, t), t) - \partial_1u(0, t)\phi(x, t)|
&\le \frac{|\phi(x, t)|^3\sup|\partial_1^3u(\cdot, t)|}6\\
&\le \frac{x^2\sup|\partial_1^3u(\cdot, t)|}6|\phi(x, t)|.
\end{align*}
Multiplication by $2|\phi(x, t)|$ gives
\[
|\partial_t\phi(x, t)^2 - 2\partial_1u(0, t)\phi(x, t)^2|
\le \frac{x^2\sup|\partial_1^3u(\cdot, t)|}3\phi(x, t)^2
\]
so
\[
\partial_t\phi(x, t)^2 \le (2\partial_1u(0, t) + \frac{x^2\sup|\partial_1^3u(\cdot, t)|}3)\phi(x, t)^2
\]
so
\[
\phi(x, t)^2 \le x^2\exp\int_0^t (2\partial_1u(0, s) + \frac{x^2\sup|\partial_1^3u(\cdot, s)|}3)ds
\]
so
\[
|\phi(x, t)| \le |x|\exp\int_0^t (\partial_1u(0, s) + \frac{x^2\sup|\partial_1^3u(\cdot, s)|}6)ds.
\]
The inequality in the other direction follows similarly.

(v) and (vi) Taking the $x$ derivative of the ODE gives
\[
\partial_{tx}\phi(x, t) = \partial_1u(\phi(x, t), t)\partial_x\phi(x, t).
\]
Since $\partial_x\phi(x, 0) = 1$,
\[
\partial_x\phi(x, t) = \exp\int_0^t \partial_1u(\phi(x, s), s)ds.
\]
By Taylor's theorem with the Lagrange remainder,
\[
|\partial_1u(\phi(x, t), t) - \partial_1u(0, t)|
\le \frac{|\phi(x, t)|^2\sup|\partial_1^3u(\cdot, t)|}2
\le \frac{x^2\sup|\partial_1^3u(\cdot, t)|}2
\]
so
\[
\left| \int_0^t \partial_1u(\phi(x, s), s)ds - \int_0^t \partial_1u(0, s)ds \right|
\le \frac{x^2}{2}\int_0^t \sup|\partial_1^3u(\cdot, s)|ds
\]
which gives the first inequality of (v), as well as (vi). For the second inequality of (v),
Note that $x^2|\partial_1^3u(\cdot, t)| \le |\partial_1u(0, t)|$,
so $\partial_1u(0, s) + \frac{x^2\sup|\partial_1^3u(\cdot, s)|}2$
has sign of $\partial_1u(0, s) \le 0$.

(vii) Taking another $x$ derivative of the ODE gives
\[
\partial_{txx}\phi(x, t) = \partial_1^2u(\phi(x, t), t)\partial_x\phi(x, t)^2
+ \partial_1u(\phi(x, t), t)\partial_{xx}\phi(x, t).
\]
Since
\[
|\partial_1u(\phi(x, t), t) - \partial_1u(0, t)|
\le \frac{x^2\sup|\partial_1^3u(\cdot, t)|}2
\le \frac{\partial_1u(0, t)}2,
\]
$\partial_1u(\phi(x, t), t)$ has the sign of $\partial_1u(0, t)$, which in non-positive.
Then $\partial_1u(\phi(x, t), t)\partial_{xx}\phi(x, t)$ has the sign of $-\partial_{xx}\phi(x, t)$, so
\[
\partial_t|\partial_{xx}\phi(x, t)| \le \partial_1^2u(\phi(x, t), t)\partial_x\phi(x, t)^2.
\]
Using $|\partial_1^2u(\phi(x, t), t)| \le |x|\sup|\partial_1^3u(\cdot, t)|$ and (v) gives
\[
\partial_t|\partial_{xx}\phi(x, t)| \le |x|\sup|\partial_1^3u(\cdot, t)|.
\]
Since $\partial_{xx}\phi(x, 0) = 0$, (vii) follows.
\end{proof}

\begin{lem}\label{1DODE2}
Let $u(x, t)$ be a smooth scalar function that is odd in $x$.
Let $\phi(x, t_0, t)$ be its flow map, satisfying
\[
\phi(x, t_0, t_0) = x, \quad \partial_t\phi(x, t_0, t) = u(\phi(x, t_0, t)).
\]
Let
\[
K(t) = \exp\int_t^1 -\partial_xu(0, s)ds.
\]
Let $t \le 1$ and $X \ge 0$. Assume for all $s \in [t, 1]$, $X^2\sup|\partial_x^3u(\cdot, s)| \le -\partial_xu(0, s)$. Then

(i) For $|x| \le K(t)^{-1}X\exp\int_t^1 -\frac{X^2\sup|\partial_x^3u(\cdot, s)|}6ds$, $|\phi(x, 1, t)| \le X$.

(ii) For $|x| \le X$, $\partial_x\phi(x, t, 1) \le K(t)^{-1}\exp\int_t^1 \frac{x^2\sup|\partial_x^3u(\cdot, s)|}2ds \le 1$.

(iii) For $|x| \le X$, $|\partial_x^2\phi(x, t, 1)| \le |x|\int_t^1 \sup|\partial_x^3u(\cdot, s)|ds$.
\end{lem}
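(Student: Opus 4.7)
The structure of the statement suggests reducing everything to Lemma~\ref{1DODE} by a time translation. Set $\bar u(y, s) := u(y, s + t)$ for $s \in [0, 1-t]$, and let $\bar\phi$ denote its forward flow starting at time $0$, so that $\bar\phi(x, s) = \phi(x, t, s + t)$ and in particular $\bar\phi(x, 1-t) = \phi(x, t, 1)$. Under the hypothesis of the present lemma, $\bar u$ satisfies the hypothesis of Lemma~\ref{1DODE} with the same $X$, for all $s \in [0, 1-t]$.

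With this reduction, parts (ii) and (iii) follow directly by applying Lemma~\ref{1DODE}(v) and (vii) to $\bar u$ at $s = 1 - t$ and rewriting the integrals over $[0, 1-t]$ as integrals over $[t, 1]$ via $s \mapsto s - t$. Splitting the exponential in Lemma~\ref{1DODE}(v) using $K(t)^{-1} = \exp\int_{t}^{1}\partial_{x}u(0, s)\,ds$ yields the first inequality of (ii); the second inequality $\le 1$ follows from the same sign check as in part (v) of Lemma~\ref{1DODE}, using $x^{2}\sup|\partial_{x}^{3}u(\cdot, s)| \le X^{2}\sup|\partial_{x}^{3}u(\cdot, s)| \le -\partial_{x}u(0, s)$. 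Part (iii) is a verbatim transcription of Lemma~\ref{1DODE}(vii) after the time shift.

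Part (i) is the only genuinely new content and I expect it to be the main obstacle. The idea is that $\phi(\cdot, 1, t)$ is the inverse of the forward flow $\phi(\cdot, t, 1) = \bar\phi(\cdot, 1-t)$, so an upper bound on $|\phi(x, 1, t)|$ will come from inverting a lower bound on the forward flow. By oddness of $u$ and the sign-preservation property from Lemma~\ref{1DODE}(ii), it suffices to treat $x \ge 0$. Lemma~\ref{1DODE}(vi) applied to $\bar u$ shows that $r \mapsto \bar\phi(r, 1-t)$ has strictly positive derivative on $[0, X]$, hence is a strictly increasing continuous bijection from $[0, X]$ onto $[0, \bar\phi(X, 1-t)]$, and Lemma~\ref{1DODE}(iv) at $r = X$, combined with the same exponential bookkeeping as above, gives
\[
\bar\phi(X, 1-t) \ge K(t)^{-1} X \exp\int_{t}^{1} -\tfrac{X^{2}\sup|\partial_{x}^{3}u(\cdot, s)|}{6}\,ds.
\]
Consequently, whenever $x$ lies in the range prescribed by (i), there is a unique $y \in [0, X]$ with $\bar\phi(y, 1-t) = x$, and by uniqueness of the ODE flow this $y$ equals $\phi(x, 1, t)$, so $|\phi(x, 1, t)| \le X$. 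The subtlety here is that one cannot derive this backward estimate by naively reversing time and reapplying Lemma~\ref{1DODE}: the sign of $\partial_{x}u(0, s)$ is the wrong one for the reversed ODE, and the relevant hypothesis would fail. The monotone-inversion argument is what bypasses this issue, using only forward bounds from Lemma~\ref{1DODE}.
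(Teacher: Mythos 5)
Your proof is correct. Parts (ii) and (iii) coincide with the paper's, which simply invokes Lemma~\ref{1DODE} (v) and (vii) after the (implicit) time shift. For part (i) you take a genuinely different, and in fact cleaner, route. The paper argues by contradiction: assuming $|\phi(x,1,\cdot)|$ exceeds $X$ or fails to extend to time $t$, it picks a time $t_1$ at which $|\phi(x,1,t_1)| = X$, applies Lemma~\ref{1DODE}~(iv) to the forward flow started at $\pm X$ at time $t_1$, and concludes via reversibility that the hypothesis on $|x|$ is \emph{saturated}; it then needs a rigidity endgame (equality forces $\partial_x u(0,\cdot)\equiv 0$ on $[t,t_1]$, hence by the hypothesis and oddness $u\equiv 0$ there, so the flow is frozen) to reach the contradiction. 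Your monotone-inversion argument uses the same key input --- Lemma~\ref{1DODE}~(iv) at the endpoint $X$ --- but packages it as surjectivity of the strictly increasing map $y \mapsto \phi(y,t,1)$ from $[0,X]$ onto an interval containing the prescribed range of $x$, with uniqueness of the ODE identifying the preimage with $\phi(x,1,t)$ (which in particular settles existence of the backward flow down to time $t$). This sidesteps the equality case entirely and hence the rigidity step, which is the most delicate part of the paper's argument. Your closing remark that naive time reversal fails because the sign of $\partial_x u(0,\cdot)$ is wrong for the reversed ODE is exactly the obstruction that drives the paper's detour through forward estimates as well. One cosmetic point: Lemma~\ref{1DODE} is stated with the hypothesis holding for all $t \ge 0$, whereas after your time shift it is only available on $[0,1-t]$; this is harmless since the proof of Lemma~\ref{1DODE} only uses the hypothesis on the time interval actually traversed, and the paper makes the same implicit use when applying Lemma~\ref{1DODE} on $[t_1,1]$.
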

\begin{proof}
(i) Assume that $|\phi(x, 1, t)| > X$, or that the flow map cannot be extended to time $t$.
Since $\partial_xu(0, s) \le 0$, $K(t) \ge 1$, so $|\phi(x, 1, 1)| = |x| \le X$.
Then there is $t < t_1 \le 1$ such that $|\phi(x, 1, t_1)| = X$.
Since
\begin{align*}
\phi(\phi(x, 1, t_1), t_1, t_1) &= \phi(x, 1, t_1) = \pm X,\\
\partial_t\phi(\phi(x, 1, t_1), t_1, t) &= u(\phi(\phi(x, 1, t_1), t_1, t), t),
\end{align*}
$\phi(\phi(x, 1, t_1), t_1, t)$ satisfies the ODE in Lemma \ref{1DODE}, with initial data
$\pm X$ satisfying the assumption. Then by Lemma \ref{1DODE} (iv),
\begin{align*}
|\phi(\phi(x, 1, t_1), t_1, 1)| &\ge K(t_1)^{-1}X\exp\int_{t_1}^1 -\frac{X^2\sup|\partial_x^3u(\cdot, s)|}6ds\\
&\ge K(t)^{-1}X\exp\int_t^1 -\frac{X^2\sup|\partial_x^3u(\cdot, s)|}6ds.
\end{align*}
By the reversibility of the flow, $\phi(\phi(x, 1, t_1), t_1, 1) = x$.
Hence the inequality on $x$ in (i) is saturated.
If $X = 0$ then $x = 0$, so $\phi(x, 1, s) = 0$ by the uniqueness theorem.
Otherwise $K(t) = K(t_1)$, so for all $s \in [t, t_1]$, $\partial_xu(0, s) = 0$.
By the assumption, $\sup|\partial_x^3u(\cdot, s)| = 0$. Since $u$ is odd in $x$,
$u(\cdot, s) \equiv 0$ for all $s \in [t, t_1]$, so $|\phi(x, 1, t)| = |\phi(x, 1, t_1)| = X$, a contradiction.

(ii) and (iii) follow from Lemma \ref{1DODE} (v) and (vii) respectively.
\end{proof}

Now we solve the flow in the remaining components.
\begin{lem}\label{3DPDE}
Let $U(x, t)$ be a smooth, polar and divergence-free velocity field on $\mathbb R^3$ of the form
\[
U(x, t) = (x_1\partial_1U_1(0, x_2, 0, t), U_2(0, x_2, 0, t), x_3\partial_3U_3(0, x_2, 0, t)).
\]
Let $\phi(x, t_0, t)$ be its flow map, satisfying
\[
\phi(x, t_0, t_0) = x, \quad \partial_t\phi(x, t_0, t) = U(\phi(x, t_0, t)).
\]
Let
\[
K = \exp\int_t^1 -\partial_2U_2(0, s)ds.
\]
Let $t \le 1$. Assume for all $s \in [t, 1]$, $X^2\sup|\partial_2^3U_2(0, \cdot, 0, s)| \le -\partial_2U_2(0, 0, 0, s)$.
Then for $|x_2| \le X$,
\[
\phi_1(x, t, 1) = x_1Kg_1(x_2), \quad \phi_3(x, t, 1) = x_3g_3(x_2),
\]
where
\begin{align*}
|\ln g_3(x_2)| &\le \int_t^1 \sup|\partial_3U_3(0, \cdot, 0, s)|ds,\\
|\ln g_1(x_2)| &\le \int_t^1 (\sup|\partial_3U_3(0, \cdot, 0, s)|
+ \frac{x_2^2\sup|\partial_2^3U_2(0, \cdot, 0, s)|}2)ds,\\
|(\ln g_3)'(x_2)| &\le |x_2|\int_t^1 \sup|\partial_{223}U_3(0, \cdot, 0, s)|ds,\\
|(\ln g_1)'(x_2)|
&\le |x_2|\int_t^1 (\sup|\partial_{223}U_3(0, \cdot, 0, s)| + \sup|\partial_2^3U_2(0, \cdot, 0, s)|)ds,\\
|(\ln g_3)''(x_2)| &\le (1 + \ln K)\int_t^1 \sup|\partial_{223}U_3(0, \cdot, 0, s)|ds,\\
|(\ln g_1)''(x_2)|
&\le (1 + \ln K)\int_t^1 (\sup|\partial_{223}U_3(0, \cdot, 0, s)| + \sup|\partial_2^3U_2(0, \cdot, 0, s)|)ds.
\end{align*}
\end{lem}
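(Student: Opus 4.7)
Plan. The key structural observation is that the second component of $U$ depends only on $x_2$, so the flow decouples: first I solve for $\phi_2(x_2, t, s)$, then substitute it into the (linear, explicitly integrable) ODEs for $\phi_1$ and $\phi_3$. The $\phi_2$ equation is $\partial_s \phi_2 = U_2(0, \phi_2, 0, s)$, which is exactly the setup of Lemma \ref{1DODE2} applied to $u(y, s) := U_2(0, y, 0, s)$; that lemma (or its interval variant for $s \in [t, 1]$, which has the same proof) supplies $|\phi_2| \le |x_2|$, $|\partial_{x_2}\phi_2| \le 1$, and $|\partial_{x_2}^2\phi_2(x_2, t, s)| \le |x_2|\int_t^s \sup|\partial_2^3 U_2(0, \cdot, 0, r)|\,dr$, which are the only facts about $\phi_2$ I will need.

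Given $\phi_2$, the equations for $\phi_1$ and $\phi_3$ read $\partial_s \phi_j = \phi_j \,\partial_j U_j(0, \phi_2(x_2, t, s), 0, s)$ for $j = 1, 3$, and integrate to $\phi_j(x, t, 1) = x_j \exp \int_t^1 \partial_j U_j(0, \phi_2(x_2, t, s), 0, s)\, ds$. This already gives the product form $\phi_3 = x_3 g_3(x_2)$. For $\phi_1$, I use the divergence-free condition $\partial_1 U_1 + \partial_2 U_2 + \partial_3 U_3 = 0$ (in particular on the axis $x_1 = x_3 = 0$) to rewrite the exponent as $-\int_t^1 [\partial_2 U_2 + \partial_3 U_3](0, \phi_2, 0, s)\, ds$, peel off the $x_2 = 0$ contribution $-\partial_2 U_2(0, 0, 0, s)$ as $\ln K$, and declare $g_1$ to be what remains, i.e. $\ln g_1(x_2) = \int_t^1 [\partial_2 U_2(0, 0, 0, s) - \partial_2 U_2(0, \phi_2, 0, s) - \partial_3 U_3(0, \phi_2, 0, s)]\, ds$.

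The zeroth-order bounds then come from Taylor-expanding $\partial_2 U_2$ and $\partial_3 U_3$ in their $x_2$-argument around $0$ and using $|\phi_2| \le |x_2|$. For $\ln g_1$ the crucial vanishing $\partial_{22}U_2(0, 0, 0, s) = 0$ (forced by $U_2$ odd in $x_2$, so $\partial_{22}U_2$ is odd in $x_2$) kills the linear Taylor term and yields the $x_2^2/2$ factor; for $\ln g_3$ one simply uses the sup norm of $\partial_3 U_3$. The first-derivative estimates follow by differentiating under the integral through the chain rule in $\phi_2$: the integrands become $\partial_{23}U_3(0, \phi_2, 0, s)\,\partial_{x_2}\phi_2$ and its $\partial_{22}U_2$ counterpart; bounding $|\partial_{x_2}\phi_2| \le 1$ and Taylor-expanding once more in $\phi_2$ gains a factor $|x_2|$. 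For the $U_3$ cancellation one needs $\partial_{23}U_3(0, 0, 0, s) = 0$, which holds because the vorticity in the larger construction is axial, making $U_3$ even in $x_2$ and hence $\partial_{23}U_3$ odd in $x_2$.

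The main technical step I expect is the second-derivative estimate, where the factor $(1 + \ln K)$ appears. Differentiating the first-derivative expression once more, the product rule produces one term proportional to $(\partial_{x_2}\phi_2)^2$, bounded cleanly by $\int_t^1 \sup|\partial_{223}U_3|\, ds$ (and its $U_2$ analogue) and contributing the ``$1$'' piece; and a second term carrying $\partial_{x_2}^2\phi_2$, into which I would insert the bound $|\partial_{x_2}^2 \phi_2| \le |x_2| \int_t^1 \sup|\partial_2^3 U_2|\, ds$ together with $|\partial_{23}U_3(0, \phi_2, 0, s)| \le |x_2|\sup|\partial_{223}U_3|$. Their product gives a factor $|x_2|^2 \int_t^1 \sup|\partial_2^3 U_2|\, ds$, which the standing hypothesis $X^2 \sup|\partial_2^3 U_2(0, \cdot, 0, s)| \le -\partial_2 U_2(0, 0, 0, s)$ bounds (after integrating in $s$) by $\ln K$. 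Adding the two contributions delivers the claimed $(1 + \ln K)$ prefactor. The main bookkeeping care is tracking which parity — from polarity, from divergence-freeness, or from the axial symmetry of the ambient vorticity — justifies each Taylor cancellation along the way.
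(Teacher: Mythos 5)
Your proposal matches the paper's proof in all essentials: decouple the flow via the $x_2$-only second component (handled by Lemmas \ref{1DODE} and \ref{1DODE2}), integrate the linear ODEs for $\phi_1,\phi_3$ explicitly, use the divergence-free condition to write $\ln g_1$ in terms of $\partial_2U_2$ and $\ln g_3$, and then obtain each estimate by Taylor expansion with the same parity cancellations ($\partial_2^2U_2(0,0,0,s)=0$, $\partial_{23}U_3(0,0,0,s)=0$) and by absorbing the $\partial_{x_2}^2\phi_2$ contribution into $\ln K$ via the standing hypothesis. Your attribution of $\partial_{23}U_3(0,0,0,s)=0$ to the evenness of $U_3$ in $x_2$ (inherited from the axial vorticity) is, if anything, more explicit than the paper, which simply asserts this vanishing.
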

\begin{proof}
We have
\begin{align*}
|\ln g_3(x_2)| &= |\int_t^1 \partial_3U_3(0, \phi_2(x, t, s), 0, s)ds|
\le \int_t^1 \sup|\partial_3U_3(0, \cdot, 0, s)|ds,\\
\ln g_1(x_2) &= \int_t^1 (\partial_1U_1(0, \phi_2(x, t, s), 0, s) + \partial_2U_2(0, s))ds\\
&= \int_t^1 (\partial_2U_2(0, s) - \partial_2U_2(0, \phi_2(x, t, s), 0, s))ds - \ln g_3(x_2).
\end{align*}
By Lemma \ref{1DODE} (i), $|\phi_2(x, t, s)| \le |x_2|$, so by Taylor's theorem with the Langrange remainder,
\[
|\partial_2U_2(0, s) - \partial_2U_2(0, \phi_2(x, t, s), 0, s)|
\le \frac{x_2^2\sup|\partial_2^3U_2(0, \cdot, 0, s)|}2
\]
so
\[
|\ln g_1(x_2)| \le \int_t^1 (\sup|\partial_3U_3(0, \cdot, 0, s)|
+ \frac{x_2^2\sup|\partial_2^3U_2(0, \cdot, 0, s)|}2)ds.
\]
Also,
\begin{align*}
|(\ln g_3)'(x_2)|
&= |\int_t^1 \partial_{23}U_3(0, \phi_2(x, t, s), 0, s)\partial_2\phi_2(x, t, s)ds|\\
&\le \int_t^1 |\partial_{23}U_3(0, \phi_2(x, t, s), 0, s)|ds \tag{by Lemma \ref{1DODE2} (ii)}\\
&\le |x_2|\int_t^1 \sup|\partial_{223}U_3(0, \cdot, 0, s)|ds, \tag{$\partial_{23}U_3(0, s) = 0$}\\
|(\ln g_1)'(x_2)|
&\le \int_t^1 |\partial_2^2U_2(0, \phi_2(x, t, s), 0, s)\partial_2\phi_2(x, t, s|)ds + |(\ln g_3)'(x_2)|\\
&\le |x_2|\int_t^1 (\sup|\partial_{223}U_3(0, \cdot, 0, s)| + \sup|\partial_2^3U_2(0, \cdot, 0, s)|)ds.
\end{align*}
Also,
\begin{align*}
|(\ln g_3)''(x_2)|
&\le \int_t^1 |\partial_{223}U_3(0, \phi_2(x, t, s), 0, s)|ds \tag{by Lemma \ref{1DODE2} (ii)}\\
&+ \int_t^1 |\partial_{23}U_3(0, \phi_2(x, t, s), 0, s)\partial_2^2\phi_2(x, t, s)|ds\\
&\le \int_t^1 \sup|\partial_{223}U_3(0, \cdot, 0, s)|ds\\
&+ X^2\int_t^1 \sup|\partial_{223}U_3(0, \cdot, 0, s)|ds\int_t^1 \sup|\partial_2^3U_2(0, \cdot, 0, s)|ds
\tag{by Lemma \ref{1DODE2} (iii)}\\
&\le \int_t^1 \sup|\partial_{223}U_3(0, \cdot, 0, s)|ds(1 - \int_t^1 \partial_2U_2(0, s)ds)
\tag{by assumption}\\
&= (1 + \ln K)\int_t^1 \sup|\partial_{223}U_3(0, \cdot, 0, s)|ds.
\end{align*}
Similarly,
\begin{align*}
|(\ln g_1)''(x_2)|
&\le \int_t^1 |\partial_2^3U_2(0, \phi_2(x, t, s), 0, s)|ds\\
&+ \int_t^1 |\partial_2^2U_2(0, \phi_2(x, t, s), 0, s)\partial_2^2\phi_2(x, t, s)|ds + |(\ln g_3)''(x_2)|\\
&\le (1 + \ln K)\int_t^1 (\sup|\partial_{223}U_3(0, \cdot, 0, s)| + \sup|\partial_2^3U_2(0, \cdot, 0, s)|)ds.
\end{align*}
\end{proof}

The behavior of the vortex layer transported by $U$ can now be studied.
We also include a term that corresponds to the dissipation.
\begin{lem}\label{3DPDE2}
Let $U$, $\phi$, $K$, $g_1$ and $g_3$ be as in Lemma \ref{3DPDE}.
For $i = 1, 3$, let $\omega$ be a smooth and axial vector field on $\mathbb R^3$.
Let $g$ be a function of $t$ only. Then the solution to the PDE
\[
\partial_t\omega_i + U \cdot \nabla \omega_i = \omega_i\partial_iU_i + g\omega_i
\]
is
\begin{align*}
\omega_1(x, t) &= (GK)^{-1}g_1(x_2)^{-1}\omega_1(\phi(x, t, 1), 1),\\
\omega_3(x, t) &= G^{-1}g_3(x_2)^{-1}\omega_3(\phi(x, t, 1), 1)
\end{align*}
where
\[
G = e^{\int_t^1 g(s)ds}.
\]
\end{lem}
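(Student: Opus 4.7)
The plan is to integrate the PDE along characteristics of the flow generated by $U$. For each $i \in \{1, 3\}$, introduce the Lagrangian quantity $W_i(s) := \omega_i(\phi(x, t, s), s)$ for $s \in [t, 1]$. Differentiating in $s$ and using the flow equation $\partial_s\phi = U(\phi, s)$ together with the PDE satisfied by $\omega_i$ yields
\[
\frac{dW_i}{ds} = \bigl(\partial_t\omega_i + U \cdot \nabla \omega_i\bigr)(\phi(x, t, s), s) = \bigl(\partial_i U_i(\phi(x, t, s), s) + g(s)\bigr) W_i(s).
\]
This is a linear scalar ODE in $s$; integrating from $t$ to $1$ and rearranging gives
\[
\omega_i(x, t) = \omega_i(\phi(x, t, 1), 1)\,\exp\!\left(-\int_t^1 \bigl(\partial_i U_i(\phi(x, t, r), r) + g(r)\bigr)\,dr\right).
\]

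The remaining task is to match the integrating factor with the quantities from Lemma \ref{3DPDE}. The $g$-piece contributes $G^{-1}$ directly. For $i = 3$, the special form of $U$ forces $\partial_3 U_3(y, r) = \partial_3 U_3(0, y_2, 0, r)$, so along the flow
\[
\int_t^1 \partial_3U_3(\phi(x, t, r), r)\,dr = \int_t^1 \partial_3U_3(0, \phi_2(x, t, r), 0, r)\,dr = \ln g_3(x_2),
\]
where the last equality is exactly the first identity established in the proof of Lemma \ref{3DPDE}. This produces the factor $g_3(x_2)^{-1}$. For $i = 1$, the analogous integral becomes $\int_t^1 \partial_1U_1(0, \phi_2(x, t, r), 0, r)\,dr$; combining the definition $\ln K = -\int_t^1 \partial_2U_2(0, r)\,dr$ with the expression $\ln g_1(x_2) = \int_t^1 \bigl(\partial_1U_1(0, \phi_2, 0, r) + \partial_2U_2(0, r)\bigr)\,dr$ derived in the proof of Lemma \ref{3DPDE}, this integral equals $\ln(Kg_1(x_2))$, giving the factor $(Kg_1(x_2))^{-1}$. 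Multiplying the three factors yields the claimed formulas for $\omega_1$ and $\omega_3$.

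There is no real analytic obstacle here: the PDE is first-order linear and decoupled across the two relevant components, the flow map is well-defined by Lemmas \ref{1DODE} and \ref{1DODE2}, and Lemma \ref{3DPDE} has already packaged all the $x_2$-dependence into $K$, $g_1$, and $g_3$. The only points requiring care are the signs (since we transport backward from $s = 1$ to $s = t$) and the bookkeeping that identifies each exponentiated integral with its counterpart in Lemma \ref{3DPDE}; in particular the extra factor of $K^{-1}$ appearing only for $\omega_1$ is a direct consequence of the $\partial_2U_2(0, r)$ term built into $\ln g_1$, which itself reflects the divergence-free constraint of $U$.
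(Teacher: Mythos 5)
Your proof is correct. Both arguments ultimately integrate along the characteristics of $U$, but the mechanics differ: the paper exploits the axial hypothesis to write $\omega_i = x_i\omega_i^\circ$ and observes that $\omega_i^\circ = \omega_i/x_i$ is \emph{conserved} along the flow (the stretching term $\omega_i\partial_iU_i$ is exactly absorbed by $U\cdot\nabla x_i$), after which the prefactors $K^{-1}g_1^{-1}$ and $g_3^{-1}$ are simply read off from the explicit formulas $\phi_1(x,t,1)=x_1Kg_1(x_2)$ and $\phi_3(x,t,1)=x_3g_3(x_2)$ of Lemma \ref{3DPDE}. You instead solve the linear ODE for $W_i(s)=\omega_i(\phi(x,t,s),s)$ directly and then identify the accumulated integrating factor $\exp\int_t^1\partial_iU_i(0,\phi_2,0,r)\,dr$ with $Kg_1(x_2)$ resp.\ $g_3(x_2)$ via the identities in the proof of Lemma \ref{3DPDE}; since $\phi_i$ satisfies the same linear ODE with the same coefficient, this is the same computation packaged differently. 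Your route has the minor advantage of not using the oddness of $\omega_i$ in $x_i$ at all (the formula holds for arbitrary smooth data), at the cost of re-deriving what the flow formula already encodes; the paper's conserved-quantity device is slightly slicker but needs the axial hypothesis to make $\omega_i/x_i$ smooth. The sign bookkeeping you flag is handled correctly: transporting from $s=1$ back to $s=t$ produces the inverse factors $(GKg_1)^{-1}$ and $(Gg_3)^{-1}$ as claimed.
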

\begin{proof}
Using $\partial_t(G\omega_i) = G(\partial_t\omega_i - g\omega_i)$
we can assume that $g = 0$ and $G = 1$.

Since $\omega_i$ is odd in $x_i$, it vanishes when $x_i = 0$. Since it is also smooth, we can put
$\omega_i = x_i\omega_i^\circ$, where $\omega_i^\circ$ is smooth. Then
\[
\partial_t\omega_i + U \cdot \nabla \omega_i
= x_i\partial_t\omega_i^\circ + x_iU \cdot \nabla \omega_i^\circ + \omega_i^\circ U \cdot \nabla x_i
= x_i\partial_t\omega_i^\circ + x_iU \cdot \nabla \omega_i^\circ + \omega_i\partial_iU_i
\]
where we have used
$\omega_i^\circ U \cdot \nabla x_i = \omega_i^\circ U_i = \omega_i^\circ x_i\partial_iU_i = \omega_i\partial_iU_i$.
After cancelling this term from both sides of the PDE we see that
$\omega_i^\circ = \omega_i/x_i$ is conserved along the flow line. Thus
\begin{align*}
\omega_1(x, t) &= x_1\omega_1(\phi(x, t, 1), 1)/\phi_1(x, t, 1) = K^{-1}g_1(x_2)^{-1}\omega_1(\phi(x, t, 1), 1),\\
\omega_3(x, t) &= x_3\omega_3(\phi(x, t, 1), 1)/\phi_3(x, t, 1) = g_3(x_2)^{-1}\omega_3(\phi(x, t, 1), 1).
\end{align*}
\end{proof}

\section{Velocity estimates}\label{om}
Let $\psi$ be a smooth and even bump function supported in $[-1, 1]$.
We first prescribe one component of the vorticity (with amplitude renormalized):
\begin{align*}
\omega_1 &= a(x_2)\psi(L_1g_1(x_2)x_1)\sin(Mg_2(x_2))\psi(L_2g_2(x_2))\sin(Mg_3(x_2)x_3)\\
&\times \psi(L_3g_3(x_2)x_3).
\end{align*}
(Actually $a = 1/g_1$, but this generality is useful later) and estimate the velocity (and its derivatives) it generates.
Then $\omega_1$ is supported in $\{|x| \le 1\}$.
Through out this section we assume that $M \ge 1$, $L_1, L_2, L_3 \in [1, M^{1-\delta}]$ ($0 < \delta \le 1$), $|a| \le 2$, $g_1, g_3 \in [1/2, 2]$ and $|g_2'| \le 1$, unless noted otherwise.

For any multi-index $J = (j_1, j_2, j_3)$ let $|J| = \sum_{i=1}^3 j_i$ and
$\partial_J = \partial_1^{j_i}\partial_2^{j_2}\partial_3^{j_3}$.

Let $\delta(j) = 1$ if $j = 0$ and 0 if $j > 0$.
We will approximate the velocity (and its derivatives of order $J$) generated by $\omega_1$ using $\tilde u_{,J} = (0, \tilde u_{2,J}, \tilde u_{3,J})$, where
\begin{align*}
\tilde u_{2,J} &=
\frac{\delta(j_1)g_2'(x_2)^{j_2}g_3(x_2)^{1+j_3}a(x_2)\psi(L_1g_1(x_2)x_1)\sin(Mg_2(x_2) + j_2\pi/2)}
{M^{1-|J|}(g_3(x_2)^2 + g_2'(x_2)^2)}\\
&\times \psi(L_2g_2(x_2))\cos(Mg_3(x_2)x_3 + j_3\pi/2)\psi(L_3g_3(x_2)x_3),\\
\tilde u_{3,J} &=
-\frac{\delta(j_1)g_2'(x_2)^{1+j_2}g_3(x_3)^{j_3}a(x_2)\psi(L_1g_1(x_2)x_1)\cos(Mg_2(x_2) + j_2\pi/2)}
{M^{1-|J|}(g_3(x_2)^2 + g_2'(x_2)^2)}\\
&\times \psi(L_2g_2(x_2))\sin(Mg_3(x_2)x_3 + j_3\pi/2)\psi(L_3g_3(x_2)x_3).
\end{align*}

Let $\mathcal K$ be the Biot--Savart kernel.

\begin{lem}\label{BS-C01}
For $|J| \le 1$,
\[
|\tilde u_{,J} - \partial_J(\omega_1e_1 * \mathcal K)| \lesssim_\delta BM^{(|J|-2)(1-\delta)}
\]
where $B = \sum_{i=1}^3 L_i + L_3\ln M + \sup|a'| + \sup|g_1'| + ML_3^{-1}(\sup|g_3'| + \sup|g_2''|)$.
\end{lem}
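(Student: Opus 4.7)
The strategy is to identify $\tilde u_{,0}$ as the leading-order Fourier-multiplier approximation of the Biot--Savart velocity $u := \omega_1 e_1 * \mathcal K$, and then to control the error by combining leading-order algebraic cancellation with the $M^{-2}$ gain obtained by applying $\Delta^{-1}$ to frequency-$M$ oscillations. Since $u$ satisfies the explicit formulas $u_1 = 0$, $u_2 = -\Delta^{-1}\partial_3 \omega_1$, and $u_3 = \Delta^{-1}\partial_2 \omega_1$ (just rewriting the Biot--Savart convolution), I would write $u_j - \tilde u_{j,0} = -\Delta^{-1} E_j$ where $E_2 = \partial_3\omega_1 + \Delta\tilde u_{2,0}$ and $E_3 = -\partial_2 \omega_1 - \Delta \tilde u_{3,0}$. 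It then suffices to show $|E_j| \lesssim B$ with $E_j$ still oscillating at frequency $\sim M$, and to exploit the oscillation gain of $\Delta^{-1}$.

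To verify the leading-order cancellation, I would compute $-\Delta\tilde u_{2,0}$ and observe that when both second derivatives in $\Delta$ fall on the oscillatory phases of $\tilde u_{2,0}$, the contributions are $-\partial_2^2 \sin(Mg_2) \sim (Mg_2')^2 \sin(Mg_2)$ and $-\partial_3^2 \cos(Mg_3 x_3) = (Mg_3)^2 \cos(Mg_3 x_3)$, which sum to $M^2(g_3^2 + g_2'^2)\tilde u_{2,0}$. Combined with the prefactor $1/(M(g_3^2 + g_2'^2))$ built into $\tilde u_{2,0}$, this reconstitutes exactly $Mg_3 \cos(Mg_3 x_3) \cdot a\psi_1 \sin(Mg_2)\psi_2\psi_3$, which is precisely the leading part of $\partial_3 \omega_1$. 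The analogous identity holds for $E_3$. This is the key algebraic reason the denominator $g_3^2 + g_2'^2$ was put into the ansatz.

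All residual contributions to $E_j$ therefore arise when at least one derivative in $\Delta$ lands on an amplitude factor: $a(x_2)$, the denominator, one of $g_1, g_3, g_2'$, or a cutoff $\psi(L_i g_i x_i)$. I would bound each such term by a component of $B$: derivatives on cutoffs produce $L_1, L_2, L_3$; derivatives on $a$ and $g_1$ produce $\sup|a'|$ and $\sup|g_1'|$; derivatives on $g_3$ or $g_2'$ that interact with the phase $Mg_3 x_3$ acquire an extra $Mx_3$ factor which, using $|x_3| \lesssim L_3^{-1}$ on the support of $\omega_1$, becomes the coefficients $ML_3^{-1}\sup|g_3'|$ and $ML_3^{-1}\sup|g_2''|$; the $L_3 \ln M$ piece covers a near-diagonal logarithmic accumulation coming from iterated cutoff derivatives paired with the $1/|x-y|$ singularity of the Newtonian potential.

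To conclude I would apply $\Delta^{-1}$, which gains $M^{-2}$ on the oscillatory part of $E_j$ by Fourier analysis (or equivalently by one integration by parts against the Green's function $1/|x-y|$). The near-diagonal region $|x-y| \lesssim M^{-1}$, where integration by parts fails, has volume $\lesssim M^{-3}$ and the kernel is $L^1$-integrable there, so its contribution fits in the same bound. The constraint $L_i \le M^{1-\delta}$ produces the $(1-\delta)$ exponent by forcing the $L_i$ pieces of $B$ to be paid at the rate $M^{-(1-\delta)}$ rather than the ideal $M^{-1}$, yielding $|u - \tilde u_{,0}| \lesssim BM^{-2(1-\delta)}$. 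For $|J| = 1$ the same blueprint applies after differentiation: $\tilde u_{,J}$ is recognized as the dominant piece of $\partial_J \tilde u_{,0}$ where the derivative hits the oscillatory phase (which is why the defining formula has $\sin/\cos$ shifted by $j_2\pi/2, j_3\pi/2$ and an overall $M^{|J|}$ factor), while terms where $\partial_J$ lands on an amplitude are of size $BM^{-(1-\delta)}$ and fit within the claim. The main obstacle is the passage from a pointwise bound on $E_j$ to one on $\Delta^{-1}E_j$: this requires extracting the oscillation gain explicitly through integration by parts against $1/|x-y|$, with careful treatment of the diagonal singularity and careful bookkeeping of which amplitude or cutoff derivative contributes which component of $B$.
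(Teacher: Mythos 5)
Your reformulation $u_2=-\Delta^{-1}\partial_3\omega_1$, $u_3=\Delta^{-1}\partial_2\omega_1$, $u_j-\tilde u_{j,0}=-\Delta^{-1}E_j$ is correct, and the algebraic cancellation you identify (the symbol of $-\Delta$ on the phase $e^{iM(g_2(x_2)+g_3(x_2)x_3)}$ being $M^2(g_2'^2+g_3^2)$, which is exactly what the denominator in the ansatz is built to cancel) is indeed the reason the approximation works. But this route has a genuine gap in the derivative counting: to form $E_2=\partial_3\omega_1+\Delta\tilde u_{2,0}$ you must apply $\partial_2^2$ to every slowly varying factor of $\tilde u_{2,0}$, and the terms where both derivatives land on $a(x_2)$, $\psi(L_1g_1(x_2)x_1)$, $\psi(L_3g_3(x_2)x_3)$ or the denominator $g_3^2+g_2'^2$ produce $\sup|a''|$, $\sup|g_1''|$, $\sup|g_3''|$ and $\sup|g_2'''|$. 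None of these quantities appear in the constant $B$ of the statement, and the hypotheses of the lemma impose no bound on them, so $|E_j|\lesssim B$ is simply not available; at best you prove the lemma with a strictly larger $B$. This is not cosmetic: the lemma is applied downstream (Lemma \ref{BS-C12}, Lemma \ref{quadratic}, and the induction in Section \ref{construction}) with careful bookkeeping of exactly which derivatives of $a,g_1,g_2,g_3$ must be propagated, and the induction only controls $g_{ni}''$, never $g_{ni}'''$. The paper avoids this entirely by never differentiating the ansatz twice: it freezes the slowly varying coefficients to \emph{first} order in $h$ inside the convolution (errors linear in $\sup|a'|$, $\sup|g_1'|$, $\sup|g_3'|$, $\sup|g_2''|$) and then evaluates the resulting constant-coefficient oscillatory integral $\int \chi(h/M^\delta)e^{ig_2'h_2+ig_3h_3}/(4\pi|h|)\,dh$ exactly by Fourier transform, which is also where $1/(g_3^2+g_2'^2)$ emerges.

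A secondary point: the statement that $\Delta^{-1}$ ``gains $M^{-2}$ by one integration by parts'' (or two) is too optimistic. The envelope $\psi(L_3g_3(x_3+h_3))$ oscillates at scale $L_3$, which can be as large as $M^{1-\delta}$, so each integration by parts in $h_3$ only nets a factor $\max(L_3,|h|^{-1})/M$, i.e.\ as little as $M^{-\delta}$ per step in the region $|h|\ge M^{-1+\delta}$. One must integrate by parts $O(1/\delta)$ times, exactly as the paper does with $k=[(|J|+2)/\delta]+1$, and separately treat the region $h_1^2+h_2^2\le M^{-2+2\delta}$ (your near-diagonal estimate $\int_{|h|\le M^{-1+\delta}}|h|^{-1-|J|}dh\lesssim M^{(|J|-2)(1-\delta)}$ does handle that part correctly, and also explains the $L_3\ln M$ term). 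So the skeleton of your far/near split is sound, but the proof as written does not establish the lemma with the stated constant $B$.
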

\begin{proof}
We can decompose all factors of the form $\sin X$ as a linear combination of $e^{\pm iX}$ and $\cos X$
as a linear combination of $e^{\pm i(X + \pi/2)} = \pm ie^{\pm iX}$ respectively,
so we can replace all factors of the form
$\sin X$ with $e^{iX}$ and $\cos X$ with $e^{i(X + \pi/2)} = ie^{iX}$ respectively,
in both $\omega_1$ and $\tilde u_{,J}$. We then have
\begin{align*}
\partial_J(\omega_1e_1 * \mathcal K)
&= \int_{\mathbb R^3} \frac{h\times e_1}{4\pi|h|^3}\partial_J\omega_1(x + h)dh\\
&= (-1)^{|J|}PV\int_{\mathbb R^3} \omega_1(x + h)\partial_J\frac{h\times e_1}{4\pi|h|^3}dh + c_J\omega_1(x)
\end{align*}
where $c_{(0,0,0)} = 0$, and if $J_j = 1$,
$c_J$ is a constant vector depending only on $J$ because the last term comes from the boundary term
$-\lim_{r\to0}\int_{S(r)} \frac{h\times e_1}{4\pi|h|^3}\omega_1(x+h)d\vec S\cdot e_j
=-\omega_1(x)\int_{S(1)} \frac{h\times e_1}{4\pi}h_jdS$.

Integrating by parts in $h_3$ we get, for any $k \in \mathbb N$, $n = 3, 5$ and $m = 0, 1, \dots, n - 2$,
\begin{align*}
&\left| \int_{\mathbb R} \frac{h_3^me^{iMg_3(x_2 + h_2)(x_3 + h_3)}\psi(L_3g_3(x_2 + h_2)(x_3 + h_3))}{|h|^n}dh_3 \right|\\
&= \left| \int_{\mathbb R} \frac{i^ke^{iMg_3(x_2 + h_2)(x_3 + h_3)}}{M^kg_3(x_2 + h_2)^k}
\frac{\partial^k}{\partial h_3^k}\frac{h^m\psi(L_3g_3(x_2 + h_2)(x_3 + h_3))}{|h|^n}dh_3 \right|\\
&\lesssim_k M^{-k}\sum_{j=0}^k L_3^j\int_{\mathbb R} |h|^{m-n-k+j}dh_3
\lesssim_k M^{-k}\sum_{j=0}^k L_3^j(h_1^2 + h_2^2)^{-(n-m-1+k-j)/2}
\end{align*}
Since each component of $\partial_J(h/|h|^3)$ is of the form $P(h)/|h|^{2|J|+3}$
where $P$ is a homogeneous polynomial of degree $|J| + 1$,
we can apply the above with $n = 2|J| + 3$ and $m = \deg_3P \le |J| + 1 \le n - 2$.
After including $|J| + 1 - m$ factors of $h_1$ and $h_2$ we get
\begin{align*}
&\left| \int_{h_1^2+h_2^2\ge M^{-2+2\delta}} \omega_1(x + h)\partial_J\frac{h\times e_1}{|h|^3}dh \right|\\
&\lesssim_k M^{-k}\sum_{j=0}^k L_3^j\int_{h_1^2+h_2^2\ge M^{-2+2\delta}\atop|x+h|\le1} (h_1^2 + h_2^2)^{-(|J|+1+k-j)/2}dh_1dh_2
\end{align*}
because $\omega$ is supported in $\{|x| \le 1\}$. Since this domain has area at most $\pi$, the right-hand side
\begin{align*}
&\lesssim_k M^{|J|-k}\sum_{j=0}^{k-2} L_3^jM^{(1-\delta)(k-j-1)} + M^{|J|-k}(L_3^{k-1}\ln M + L_3^k)\\
&\lesssim_k M^{|J|-\delta k}(1 + M^{-1+\delta}\ln M)
\lesssim_\delta M^{-2} \tag{$k := [\frac{|J| + 2}\delta] + 1$}
\end{align*}
so if we divide $\omega=\chi(M^{1-\delta}h)\omega+(1-\chi(M^{1-\delta}h))\omega$,
where $\chi(h) = \chi(h_1, h_2)$ is a smooth bump function equal to 1 near the origin, we can use the previous bounds to obtain good estimates for the contribution coming from $(1-\chi(M^{1-\delta}h))\omega$.

Denote $f(a) - f(b)$ by $f|_a^b$. Since $g_1 \le 2$ and $|J| \le 1$,
\begin{align*}
\left| \int \chi(M^{1-\delta}h)\omega_1|_{(x_1, x_2 + h_2, x_3 + h_3)}^{x + h}\partial_J\frac{h \times e_1}{4\pi|h|^3}dh \right|
&\lesssim \int \chi(M^{1-\delta}h)\frac{\left|\omega_1|_{(x_1, x_2 + h_2, x_3 + h_3)}^{x + h}\right|}{|h|^{2+|J|}}dh\\
&\lesssim L_1\int \frac{\chi(M^{1-\delta}h)|h_1|}{|h|^{2+|J|}}dh_3dh_1dh_2\\
&\lesssim L_1\int \frac{\chi(M^{1-\delta}h)|h_1|dh_1dh_2}{(h_1^2 + h_2^2)^{(1+|J|)/2}}\\
&\lesssim L_1M^{(|J|-2)(1-\delta)}
\end{align*}
so we can replace $\omega_1(x + h)$ by $\omega_1(x_1, x_2 + h_2, x_3 + h_3)$.

Note that when computing $\partial_2(a(x_2)\psi(L_1g_1x_1)\psi(L_2g_2)\psi(L_3g_3(x_3 + h_3))e^{iMg_3(x_2)(x_3 + h_3)})$,
the following factors appear:
\begin{center}
\begin{tabular}{|c|c|c|} 
 \hline
 Factor & Bound & $x_2$ derivative bound \\ 
 \hline
 $a(x_2)$ & 2 & $\sup|a'|$ \\ 
 \hline
 $\psi(L_1g_1(x_2)x_1)$ & $\lesssim 1$ & $\lesssim \sup|g_1'|$ ($|L_1g_1x_1| \lesssim 1$, so $L_1|x_1| \lesssim 1$) \\ 
 \hline
 $\psi(L_2g_2(x_2)$ & $\lesssim 1$ & $\lesssim L_2$ (note that $|g_2'| \le 1$) \\ 
 \hline
 $\psi(L_3g_3(x_2)(x_3 + h_3))$ & $\lesssim 1$ & $\lesssim \sup|g_3'|$ (similarly $L_3|x_3 + h_3| \lesssim 1)$ \\ 
 \hline
 $e^{iMg_3(x_2)(x_3 + h_3)}$ & 1 & $\lesssim ML_3^{-1}\sup|g_3'|$ ($M|x_3 + h_3| \lesssim ML_3^{-1})$ \\ 
 \hline
 The product & $\lesssim 1$ & $\lesssim B$ (we have used $M/L_3 \ge 1$) \\ 
 \hline
\end{tabular}
\end{center}
Also, on the support of $\chi(M^{1-\delta}h)$ we have
\begin{align*}
|\exp|_{iM(g_2(x_2) + h_2g_2'(x_2))}^{iMg_2(x_2 + h_2)}|
&\le Mh_2^2\sup|g_2''| < M^\delta|h_2|\sup|g_2''|\\
&\le ML_3^{-1}|h_2|\sup|g_2''| \le B|h_2|
\end{align*}
so
\begin{align*}
&|\omega_1(x_1, x_2 + h_2, x_3 + h_3) + a(x_2)\psi(L_1g_1(x_2)x_1)e^{iM(g_2(x_2) + h_2g_2'(x_2))}\psi(L_2g_2(x_2))\\
&\times e^{iMg_3(x_2)(x_3 + h_3)}\psi(L_3g_3(x_2)(x_3 + h_3))| \lesssim B|h_2|
\end{align*}
and
\begin{align*}
&\int \frac{\chi(M^{1-\delta}h)}{|h|^{2+|J|}}|\omega_1(x_1, x_2 + h_2, x_3 + h_3) - a(x_2)\psi(L_1g_1(x_2)x_1)e^{iM(g_2(x_2) + h_2g_2'(x_2))}\\
&\times \psi(L_2g_2(x_2))e^{iMg_3(x_2)(x_3 + h_3)}\psi(L_3g_3(x_2)(x_3 + h_3))|dh\\
&\lesssim B\int \frac{\chi(M^{1-\delta}h)|h_2|}{|h|^{2+|J|}}dh_3dh_1dh_2
\lesssim B\int \frac{\chi(M^{1-\delta}h)|h_2|dh_1dh_2}{(h_1^2+h_2^2)^{(1+|J|)/2}}\\
&\lesssim BM^{(|J|-2)(1-\delta)}.
\end{align*}
Since $g_3 \le 2$, $|\psi|_{L_3g_3(x_2)x_3}^{L_3g_3(x_2)(x_3 + h_3)}| \lesssim L_3|h_3|$. Then
\begin{align*}
&\int \frac{\chi(M^{1-\delta}h)}{|h|^{2+|J|}}|a(x_2)\psi(L_1g_1(x_2)x_1)e^{iM(g_2(x_2) + h_2g_2'(x_2))}\psi(L_2g_2(x_2))e^{iMg_3(x_2)(x_3 + h_3)}\\
&\times \psi|_{L_3g_3(x_2)x_3}^{L_3g_3(x_2)(x_3 + h_3)}|dh
\lesssim L_3\int\int_{|x_3+h_3|\le1} \frac{\chi(M^{1-\delta}h)|h_3|}{|h|^{2+|J|}}dh_3dh_1dh_2.
\end{align*}
because $\omega_1$ is supported in $\{|x| \le 1\}$.
Note that the integral in $h_3$ is restricted to an interval of length $O(1)$.
If $|h_3| \ge 1$ then the integrand is also $O(1)$, so the right-hand side
\begin{align*}
&\lesssim L_3M^{-2+2\delta} + L_3\int\int_0^1 \frac{\chi(M^{1-\delta}h)h_3dh_3}{|h|^{2+|J|}}dh_1dh_2\\
&\lesssim L_3M^{-2+2\delta} +
\begin{cases}
L_3\int \chi(M^{1-\delta}h)\ln\frac{h_1^2 + h_2^2 + 1}{h_1^2 + h_2^2}dh_1dh_2, & |J|=0,\\
L_3\int \chi(M^{1-\delta}h)(h_1^2 + h_2^2)^{-1/2}dh_1dh_2, & |J|=1
\end{cases}
\\
&\lesssim L_3M^{(|J|-2)(1-\delta)}(1 + \ln M)
\le BM^{(|J|-2)(1-\delta)}.
\end{align*}

Since the factors $a(x_2)\psi(L_1g_1(x_2)x_1)$, $e^{iMg_2(x_2)}$, $\psi(L_2g_2(x_2))$,
$e^{iMg_3(x_2)x_3}$ and $\psi(L_3g_3(x_2)x_3)$ are independent of $h$ and $O(1)$,
it suffices to show that
\begin{align*}
\int \chi(M^{1-\delta}h)e^{iMg_2'(x_2)h_2}e^{iMg_3(x_2)h_3}(-1)^{|J|}\partial_J\frac{h\times e_1}{4\pi|h|^3}dh
&+ c_J = \frac{i^{|J|+1}\delta(j_1)g_2'(x_2)^{j_2}g_3(x_2)^{j_3}}{M^{1-|J|}}\\
&\times\frac{(0, g_3(x_2), -g_2'(x_2))}{g_3(x_2)^2 + g_2'(x_2)^2} + \frac{O_\delta(1)}{M^{2-|J|}}.
\end{align*}
We first integrate by parts again, absorbing back the constant $c_J$.
If the derivative falls on $\chi$, integration by parts in $h_3$ shows that the integral in $h_3$ is
$\lesssim_k M^{-k}(h_1^2 + h_2^2)^{-(1+k)/2} \le M^{1-k\delta} \le M^{-2}$
(taking $k = [3/\delta] + 1$). Since the support of $\chi(M^{1-\delta}h)$ has size $O(M^{-2+2\delta})$
and its gradient has size $O(M^{1-\delta})$, the integral is under control.
Since taking the derivative of the exponential means multiplication by $ig_2'(x_2)$ or $ig_3(x_2)$,
it suffices to show the case when $J = 0$ and all factors involving $J$ disappear.
Then we undo the cross product with $e_1$,
changing the vector on the right-hand side to $(0, g_2'(x_2), g_3(x_2))$.
Next, using $h/|h|^3 = -\nabla(1/|h|)$, we can move one more derivative to the exponentials.
Then we change variables and it suffices to show that
\[
\left| \int_{\mathbb R^3} \frac{\chi(h/M^\delta)e^{ig_2'(x_2)h_2}e^{ig_3(x_2)h_3}}{4\pi|h|}dh
- \frac{1}{g_3(x_2)^2 + g_2'(x_2)^2} \right| \lesssim_\delta M^{-1}
\]
where $\chi(h) = \chi(h_1, h_2)$ is a smooth bump function equal to 1 near the origin.
Viewing $M^\delta$ as a whole, it suffices to show that, for $M \ge 1$ and $n \in \mathbb N$,
\[
\left| \int_{\mathbb R^3} \frac{\chi(h/M)e^{ig_2'(x_2)h_2}e^{ig_3(x_2)h_3}}{4\pi|h|}dh
- \frac{1}{g_3(x_2)^2 + g_2'(x_2)^2} \right| \lesssim_n M^{-n}.
\]

Let $\mathcal F_n$ be the Fourier transform in $\mathbb R^n$. Then it suffices to show that
\[
\left| \mathcal F_3\frac{\chi(h/M)}{4\pi|h|}(0, -g_2'(x_2), -g_3(x_2))
- \frac{1}{g_3(x_2)^2 + g_2'(x_2)^2} \right| \lesssim_n M^{-n}.
\]
Since the Fourier transform takes products to convolutions,
\[
\mathcal F_3\frac{\chi(h/M)}{4\pi|h|}
= \mathcal F_3(\chi(h/M)) * \mathcal F_3\frac{1}{4\pi|h|}
= M^3\mathcal F_3\chi(M\xi) * \frac{1}{|\xi|^2}.
\]
Since $\chi$ is independent of $h_3$, $\mathcal F_3\chi = \delta(\xi_3)\mathcal F_2\chi$.
Since $\int_{\mathbb R} \delta(M\xi_3)d\xi_3 = 1/M$,
\[
\mathcal F_3\frac{\chi(h/M)}{4\pi|h|}(\xi)
= M^2\int_{\mathbb R^2} \frac{\mathcal F_2\chi(M\eta)}{|\xi - \eta|^2}d\eta
\]
where $\xi - \eta = (\xi_1 - \eta_1, \xi_2 - \eta_2, \xi_3)$.

For $j + k > 0$, $\partial_1^j\partial_2^k\chi(0, 0) = 0$, so
$\int_{\mathbb R^2} \mathcal F_2\chi(M\eta)\eta_1^j\eta_2^kd\eta = 0$, so
\begin{align*}
&\left| \int_{\mathbb R^2} \frac{\mathcal F_2\chi(M\eta)}{|\xi - \eta|^2}d\eta
- \frac{1}{|\xi|^2}\int_{\mathbb R^2} \mathcal F_2\chi(M\eta)d\eta \right|\\
&\lesssim_n \int_{|\xi-\eta|\ge|\xi|/2} |\mathcal F_2\chi(M\eta)|\frac{|\eta|^nd\eta}{|\xi|^{n+2}}
\tag{when $|\eta| < |\xi|/2$ we use Taylor's theorem; otherwise it's trivial}\\
&+ \frac{1}{\xi_3^2}\int_{|\xi-\eta|<|\xi|/2} |\mathcal F_2\chi(M\eta)|d\eta
\tag{$|\xi|$, $|\xi - \eta| \ge |(\xi - \eta)_3| = |\xi_3|$}\\
&\lesssim_n \int_{\mathbb R^2} \frac{|\mathcal F_2\chi(\eta)||\eta|^n}{|M\xi|^{n+2}}d\eta_1d\eta_2
+ \frac{1}{\xi_3^2|M\xi|^{n+2}}\int_{\mathbb R^2} \frac{d\eta_1d\eta_2}{(1 + |\eta|)^3}
\lesssim_n \frac{1 + \xi_3^{-2}}{|M\xi|^{n+2}}.
\tag{$|\mathcal F_2\chi(M\eta)| \lesssim_n (1 + |M\eta|)^{-n-5}$, $M \ge 1$
and $|\eta| \ge |\xi| - |\xi - \eta| > |\xi|/2$}
\end{align*}
Since $\int_{\mathbb R^2} \mathcal F_2\chi(M\eta)d\eta = \chi(0)M^{-2} = M^{-2}$,
\[
\left| \mathcal F_3\frac{\chi(h/M)}{4\pi|h|}(\xi) - \frac{1}{|\xi|^2} \right|
\lesssim_n \frac{1 + \xi_3^{-2}}{M^n|\xi|^{n+2}}
\]
giving the desired bound since $|\xi| \ge |\xi_3| = g_3(x_2) \ge 1/2$.
\end{proof}
\begin{cor}\label{BS-C01-cor}
For $j = 0, 1$,
\begin{align*}
\|\tilde u\|_{C^j} &\lesssim (M + B)^j/M,\\
\|\tilde u - \omega_1e_1 * \mathcal K\|_{C^j} &\lesssim_\delta BM^{(j-2)(1-\delta)}
\end{align*}
because $|\tilde u_{,J}| \lesssim M^{|J|-1}$ and for $|J| = 1$,
$|\partial_J\tilde u - \tilde u_{,J}| \lesssim B/M$.
\end{cor}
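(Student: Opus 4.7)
The corollary is essentially a triangle-inequality packaging of Lemma~\ref{BS-C01} together with two elementary pointwise estimates, so my plan is to establish those first and then assemble.

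First, I would read off from the explicit formulas for $\tilde u_{2,J}$ and $\tilde u_{3,J}$ the pointwise estimate $|\tilde u_{,J}| \lesssim M^{|J|-1}$ for $|J| \le 1$: the prefactor $M^{|J|-1}$ is explicit, the amplitudes ($a$, $g_3^{1+j_3}$, $g_2'^{j_2}$), the cut-offs $\psi$, and the trigonometric factors are all $O(1)$ by the standing assumptions, and the denominator $g_3^2 + g_2'^2 \ge g_3^2 \ge 1/4$ is bounded below. In particular $\|\tilde u\|_\infty \lesssim 1/M$, which already handles the $j=0$ case of the first inequality.

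The heart of the argument is then to show that $|\partial_J \tilde u - \tilde u_{,J}| \lesssim B/M$ for $|J|=1$, by expanding $\partial_J \tilde u$ with the Leibniz rule. I would decompose $\tilde u$ (the $J=0$ version) into the slow factors $a(x_2)$, the three cut-offs $\psi$, and the amplitude $g_3/(g_3^2+g_2'^2)$ or $g_2'/(g_3^2+g_2'^2)$, together with the two fast oscillations $\sin(Mg_2(x_2))$ and $\cos(Mg_3(x_2)x_3)$. When the derivative hits the appropriate fast factor, the $M$ it produces cancels the overall $M^{-1}$, the chain-rule factor ($g_2'$ or $g_3$) matches the exponent in $\tilde u_{,J}$, and the phase shift $\sin\to\sin(\cdot+\pi/2)$ identifies the result with $\tilde u_{,J}$ exactly. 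Every other Leibniz term has the derivative on a slow factor; using the support localizations $|x_1|\lesssim L_1^{-1}$ and $|x_3|\lesssim L_3^{-1}$ to absorb the corresponding $L_i$'s, each such term is bounded by one of the constituents of $B$ (namely $\sup|a'|$, $\sup|g_1'|$, $L_2$, $\sup|g_3'|+\sup|g_2''|$ from the amplitude, or $ML_3^{-1}\sup|g_3'|$ from differentiating $\cos(Mg_3(x_2)x_3)$ in $x_2$), while the rest of the product still carries the overall $1/M$. The $x_1$ and $x_3$ derivative cases are analogous; for $j_1=1$ one has $\tilde u_{,J}=0$, so only the $B$-type contribution appears.

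Both assertions then follow by the triangle inequality. For the first, $\|\tilde u\|_\infty\lesssim 1/M$ handles $j=0$, and for $j=1$, $|\partial_J\tilde u|\le|\tilde u_{,J}|+|\partial_J\tilde u-\tilde u_{,J}|\lesssim 1+B/M \lesssim (M+B)/M$. For the second,
\[
|\partial_J\tilde u - \partial_J(\omega_1 e_1*\mathcal K)| \le |\partial_J\tilde u - \tilde u_{,J}| + |\tilde u_{,J} - \partial_J(\omega_1 e_1*\mathcal K)|;
\]
the first summand vanishes for $|J|=0$ and is $\lesssim B/M \le BM^{-(1-\delta)}$ for $|J|=1$ (since $M\ge 1$ and $\delta>0$), while Lemma~\ref{BS-C01} controls the second by $BM^{(|J|-2)(1-\delta)}$, and both contributions fit inside $BM^{(j-2)(1-\delta)}$. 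The only non-mechanical step is the bookkeeping in the previous paragraph; the delicate case is the $ML_3^{-1}$-weighted contribution from differentiating $\cos(Mg_3(x_2)x_3)$ in $x_2$, where the naive factor $M$ is reduced to $M/L_3$ by using $|x_3|\lesssim L_3^{-1}$ on $\supp\omega_1$. Once each slow-factor derivative is paired with the correct piece of $B$, the rest is immediate.
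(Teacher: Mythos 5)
Your proposal is correct and follows essentially the same route as the paper, whose entire proof is the one-line ``because'' clause in the statement of the corollary: you simply flesh out the two pointwise facts $|\tilde u_{,J}| \lesssim M^{|J|-1}$ and $|\partial_J\tilde u - \tilde u_{,J}| \lesssim B/M$ (the Leibniz bookkeeping, including the $ML_3^{-1}\sup|g_3'|$ term tamed by $|x_3|\lesssim L_3^{-1}$, is exactly the intended computation) and then assemble via the triangle inequality with Lemma~\ref{BS-C01}. No gaps.
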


Now we use the actual $\omega_1$, with $a = 1/g_1$, so $\sup|a'| \lesssim \sup|g_1'|$,
and the respective bound has $B$ replaced by $B_0 := B$ without the term $\sup|a'|$, i.e.,
$B_0 = \sum_{i=1}^3 L_i + L_3\ln M + \sup|g_1'| + ML_3^{-1}(\sup|g_3'| + \sup|g_2''|)$.
\begin{lem}\label{BS-C12}
For $|J| \le 1$,
\[
|\nabla \tilde u_{,J} - \partial_J\nabla(\omega_1e_1 * \mathcal K)| \lesssim_\delta B_1M^{(|J|-2)(1-\delta)}
\]
where $B_1 = B_0(M + B_0) + \sup|g_1''| + ML_3^{-1}\sup|g_3''|$.
\end{lem}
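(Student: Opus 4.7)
The strategy is to reduce Lemma \ref{BS-C12} to Lemma \ref{BS-C01} by commuting one derivative through the Biot--Savart convolution. Since $\omega_1$ is compactly supported, $\nabla(\omega_1 e_1 * \mathcal K) = (\nabla\omega_1) e_1 * \mathcal K$. Each component $\partial_k\omega_1$ is a finite sum of expressions with the tensor-product structure of $\omega_1$ in Lemma \ref{BS-C01} (up to swapping $\sin$ for $\cos$ or $\psi$ for $\psi'$), obtained by distributing the derivative over the product. For each such piece we apply Lemma \ref{BS-C01} with the same $|J|\le 1$ derivatives outside the convolution and sum over all pieces.

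The pieces of $\partial_k\omega_1$ fall into two classes. The \emph{main} piece is where $\partial_k$ hits one of the oscillations $\sin(Mg_2)$ or $\sin(Mg_3 x_3)$, producing a factor $Mg_2'$ or $Mg_3$; pulling the large $M$ out so that the residual amplitude remains $O(1)$, the effective constant for Lemma \ref{BS-C01} (call it $B_{\text{new}}$) is of size $B_0$, augmented by the second-derivative contributions $\sup|g_1''|$ and $ML_3^{-1}\sup|g_3''|$ that come from two derivatives landing on the arguments $L_1 g_1 x_1$ or $Mg_3 x_3$. The corresponding error is $M\cdot B_{\text{new}}\cdot M^{(|J|-2)(1-\delta)}$, contributing $B_0 M + \sup|g_1''| + ML_3^{-1}\sup|g_3''|$ to the effective $B_1$. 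The \emph{subleading} pieces arise when $\partial_k$ hits an amplitude $a$, a cutoff $\psi$, or the argument of an oscillation, producing a prefactor of size $B_0$; Lemma \ref{BS-C01} applied to them (again with new $B$ of size $B_0$) contributes an extra $B_0^2$. Summing everything yields exactly the bound $B_1 M^{(|J|-2)(1-\delta)}$ with $B_1 = B_0(M+B_0) + \sup|g_1''| + ML_3^{-1}\sup|g_3''|$.

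It remains to identify the sum of the resulting approximations with $\nabla\tilde u_{,J}$. Inspecting the explicit formula for $\tilde u_{,J}$, the derivative $\partial_k\tilde u_{,J}$ coincides with $\tilde u_{,J+e_k}$ (which is precisely the approximation Lemma \ref{BS-C01} produces from the main piece of $\partial_k\omega_1$) up to lower-order terms where $\partial_k$ falls on a non-oscillating factor; the latter are of size $B_0 M^{|J|-1}$, which is dominated by $B_1 M^{(|J|-2)(1-\delta)}$ since $B_1 \ge B_0 M$ and $M \ge 1$. The main obstacle is the combinatorial bookkeeping: enumerating the pieces of $\nabla\omega_1$ and of $\nabla\tilde u_{,J}$, verifying for each piece the rescaled hypotheses of Lemma \ref{BS-C01} (in particular $|a|\le 2$, $|g_2'|\le 1$, $L_i\in[1,M^{1-\delta}]$), and checking that every resulting contribution is absorbed into $B_1$; the presence of $B_0^2$ and of the $g''$ terms in $B_1$ is the generic outcome of tracking second-order effects through this reduction.
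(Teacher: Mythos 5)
Your proposal follows essentially the same route as the paper: distribute one derivative over the tensor-product factors of $\omega_1$, observe that each resulting piece has the same structure with a new amplitude $a(x_2)$ (this is exactly why Lemma \ref{BS-C01} was stated for general $a$), apply that lemma to each piece while tracking how $\sup|a'|$ feeds the $g_1''$, $g_3''$ and $B_0^2$ terms into $B_1$, and finally absorb the mismatch between $\nabla\tilde u_{,J}$ and the sum of the new approximations via the bound $B_0M^{|J|-1}\le B_1M^{(|J|-2)(1-\delta)}$. The paper simply carries out the combinatorial bookkeeping you defer (a case table of which factor is hit and its effect on $B_0$), so the argument is correct and matches.
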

\begin{cor}\label{BS-C12-cor}
Similarly $\|\tilde u_{,J}\|_{C^1} \lesssim M + B_0$ and
$\|\omega_1e_1 * \mathcal K\|_{C^2} \lesssim_\delta M + B_1M^{-1+\delta}$.
\end{cor}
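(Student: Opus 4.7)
The plan is to mirror the two-step structure of Corollary \ref{BS-C01-cor}, shifted up by one derivative: first obtain a direct sup bound on $\nabla\tilde u_{,J}$ by differentiating the explicit product formula, then transfer it to $\omega_1 e_1 * \mathcal K$ using Lemma \ref{BS-C12} as the error estimate.

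For the first inequality, I would differentiate $\tilde u_{,J}$ factor by factor, exactly as in the proof of Lemma \ref{BS-C01}. The overall prefactor $M^{|J|-1}$ is at most $1$ for $|J|\le 1$, and each slowly-varying factor in the formula ($a=1/g_1$, the three cutoffs, the powers of $g_3$ and $g_2'$, and the rational factor $(g_3^2+(g_2')^2)^{-1}$) is $O(1)$ in sup with one $x$-derivative controlled by a term in $B_0$. The relevant bounds are the standing assumptions $|g_2'|\le 1$, $g_1,g_3\in[1/2,2]$ (so $\sup|a'|\lesssim\sup|g_1'|$), $L_i|x_i|\lesssim 1$ on $\supp\omega_1$, and $L_3\le M^{1-\delta}$ (so $\sup|g_2''|\lesssim (L_3/M)B_0\le B_0$). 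The two oscillatory factors $\sin(Mg_2+j_2\pi/2)$ and $\cos(Mg_3x_3+j_3\pi/2)$ are also $O(1)$, but pick up an extra $O(M)$ upon differentiation in $x_2$ or $x_3$ respectively; this is the source of the dominant $M$ term. Summing over which factor the derivative hits yields $\|\tilde u_{,J}\|_{C^1}\lesssim M^{|J|-1}(M+B_0)\le M+B_0$.

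For the second inequality, the only non-trivial part of the $C^2$ norm to bound is $\|\nabla\partial_J(\omega_1 e_1 * \mathcal K)\|_{L^\infty}$ with $|J|=1$; the $C^0$ and $C^1$ pieces are already controlled by Corollary \ref{BS-C01-cor}, by quantities much smaller than $M$. Applying the triangle inequality,
\[
\|\nabla\partial_J(\omega_1 e_1 * \mathcal K)\|_{L^\infty}
\le \|\nabla\tilde u_{,J}\|_{L^\infty}+\|\nabla\tilde u_{,J}-\nabla\partial_J(\omega_1 e_1 * \mathcal K)\|_{L^\infty},
\]
the first term is $\le \|\tilde u_{,J}\|_{C^1}\lesssim M+B_0$ by the first inequality and the second is $\lesssim_\delta B_1 M^{-1+\delta}$ by Lemma \ref{BS-C12}. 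The defining formula $B_1=B_0(M+B_0)+\sup|g_1''|+ML_3^{-1}\sup|g_3''|$ gives $B_1\gtrsim B_0 M$, so $B_0\le B_1 M^{-1+\delta}$ absorbs the residual $B_0$ into the error, producing the claimed $\lesssim_\delta M+B_1 M^{-1+\delta}$. The whole argument is essentially bookkeeping; no step presents a serious obstacle beyond the care already taken in Lemma \ref{BS-C12}.
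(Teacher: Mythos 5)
Your proposal is correct and matches the route the paper intends: the paper leaves this corollary to the reader with a ``Similarly'', meaning exactly your two steps — a direct factor-by-factor sup bound on $\nabla\tilde u_{,J}$ giving $M^{|J|-1}(M+B_0)\le M+B_0$, followed by the triangle inequality against Lemma \ref{BS-C12} (with the lower-order pieces handled by Corollary \ref{BS-C01-cor}) and the absorption $B_0\le B_1M^{-1}\le B_1M^{-1+\delta}$. No gaps.
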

\begin{proof}[Proof of Lemma \ref{BS-C12}]
When we take derivatives of
\[
g_1(x_2)^{-1}\psi(L_1g_1(x_2)x_1)\sin(Mg_2(x_2))\psi(L_2g_2(x_2))\sin(Mg_3(x_2)x_3)\psi(L_3g_3(x_2)x_3)
\]
in various directions, they may fall on different factors.
In addition to changing $\psi$ to $\psi'$ and sin to cos, which do not affect the form of the bound,
it will also change $g_1(x_2)^{-1}$ to different factors $a(x_2)$, leading to different bounds, as detailed below:
\begin{center}
\begin{tabular}{|c|c|c|c|} 
 \hline
 Factor hit by $\partial$ & $a(x_2)$ & $|a'(x_2)| \lesssim$ & Effect on $B_0 \lesssim_\delta$ \\ 
 \hline
 $\partial_1\psi(L_1g_1x_1)$ & $L_1$ & 0 & $B_0L_1 \le B_0^2$ \\ 
 \hline
 $\partial_3\sin(Mg_3x_3)$ & $Mg_3/g_1$ & $M(\sup|g_1'| + \sup|g_3'|)$ & $B_0M$ \\
 \hline
 $\partial_3\psi(L_3g_3x_3)$ & $L_3g_3/g_1$ & $L_3(\sup|g_1'| + \sup|g_3'|)$ & $B_0L_3 \le B_0^2$ \\
 \hline
 $\partial_2(1/g_1(x_2))$ & $-g_1'/g_1^2$ & $\sup|g_1''| + \sup g_1'^2$ & $B_0\sup|g_1'| + \sup|g_1''|$ \\
 \hline
 $\partial_2\sin(Mg_2(x_2))$ & $Mg_2'/g_1$ & $M(\sup|g_1'| + \sup|g_2''|)$ & $B_0M$ \\
 \hline
 $\partial_2\psi(L_2g_2(x_2))$ & $L_2g_2'/g_1$ & $L_2(\sup|g_1'| + \sup|g_2''|)$ & $B_0L_2 \le B_0^2$ \\
 \hline
\end{tabular}
\end{center}
Since
\[
\partial_2\psi(L_1g_1(x_2)x_1) = L_1g_1(x_2)x_1\psi'(L_1g_1(x_2)x_1)(g_1'/g_1)(x_2),
\]
$|L_1x_1| \lesssim 1$ on the support of $\psi$
and $x\psi'(x)$ is smooth and compactly supported, we get an effect similar to that of $\partial_2(1/g_1(x_2))$.
Similarly the effect of $\partial_2\psi(L_3g_3(x_2)x_3)$ on $B_0$ is $B_0\sup|g_3'| + \sup|g_3''|$.
Also due to the support of $\psi(L_3g_3(x_2)x_3)$, the effect of $\partial_2\sin(Mg_3(x_2)x_3)$ is $O(M/L_3)$ times larger,
so it is $\lesssim_\delta B_0ML_3^{-1}\sup|g_3'| + ML_3^{-1}\sup|g_3''| \le B_0^2 + ML_3^{-1}\sup|g_3''| \le B_1$.

Finally in $\partial_2\tilde u_{,J}$ with $J \neq (1, 0, 0)$, there are extra terms coming from
\[
\left| \partial_2\frac{(g_2'(x_2)^{j_2}g_3(x_2)^{1+j_3}, g_2'(x_2)^{1+j_2}g_3(x_2)^{j_3})}{g_3(x_2)^2 + g_2'(x_2)^2} \right|
\lesssim \sup|g_3'| + \sup|g_2''| \le B_0
\]
contributing $B_0M^{|J|-1} \le B_1M^{|J|-2} \le B_1M^{(|J|-2)(1-\delta)}$ to the bound.
\end{proof}

Now we add another component to the vorticity. Let $\omega = (\omega_1, 0, \omega_3)$, where
\begin{align*}
\omega_3 &= \int_{-\infty}^{x_3} \partial_1\omega_1(x_1, x_2, s)ds
\end{align*}
so that $\omega$ is divergence-free. We first control the error in the quadratic self-interaction.
\begin{lem}\label{quadratic}
For $s \in [0, 1]$,
\[
\|(\omega * \mathcal K) \cdot \nabla\omega\|_{C^s} + \|\omega \cdot \nabla(\omega * \mathcal K)\|_{C^s}
\lesssim_\delta B_0^{1-s}B_1^sM^{-2+2\delta}(M + B_0).
\]
\end{lem}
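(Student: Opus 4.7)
The plan is to exploit the oscillatory cancellation that was engineered into the definition of $\tilde u$, together with the smallness of the divergence-free correction $\omega_3$, and the error estimates from Lemmas~\ref{BS-C01} and \ref{BS-C12}. First, write $\omega = \omega_1 e_1 + \omega_3 e_3$. Since $\omega_3 = \int_{-\infty}^{x_3} \partial_1 \omega_1(x_1, x_2, s)\,ds$ and the integrand contains the oscillating factor $\sin(Mg_3(x_2) s)$, one integration by parts gives $|\omega_3| \lesssim L_1/M$ with similar gains for its derivatives. Correspondingly, $\omega_3 e_3 * \mathcal K$ is of order $L_1/M^2$; both $\omega_3$ and $\omega_3 e_3 * \mathcal K$ are strictly smaller than $\omega_1$ and $\tilde u$ by at least a factor of $L_1/M \le M^{-\delta}$, so any cross term involving an $\omega_3$ factor is easily absorbed into the budget.

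For $(\omega * \mathcal K) \cdot \nabla \omega$, the dominant contribution is $(\omega_1 e_1 * \mathcal K) \cdot \nabla \omega_1 e_1$. Using Corollary~\ref{BS-C01-cor} and Corollary~\ref{BS-C12-cor} to replace $\omega_1 e_1 * \mathcal K$ by $\tilde u$ costs at most $O_\delta(B_0 M^{-2+2\delta})$ in $C^0$ and $O_\delta(B_1 M^{-1+\delta})$ in $C^1$; multiplied by $\|\nabla\omega_1\|_{C^0} \lesssim M$ (or one more derivative for the $C^1$ estimate), this lies within the claimed budget. The decisive step is then to bound $\tilde u_2 \partial_2 \omega_1 + \tilde u_3 \partial_3 \omega_1$ directly. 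Expanding via the product rule, the two top-order terms, obtained when $\partial_2$ hits $\sin(Mg_2)$ and $\partial_3$ hits $\sin(Mg_3 x_3)$ and produces factors $Mg_2'$ and $Mg_3$ respectively, come with coefficients $g_3 \cdot g_2'/(g_3^2+g_2'^2)$ and $-g_2' \cdot g_3/(g_3^2+g_2'^2)$ and cancel identically. All residual terms then involve at least one derivative falling on a slowly varying factor ($a$, $g_i$, $\psi(L_i g_i\cdot)$), each of whose $C^0$ bound is contained in $B_0$ and whose $C^1$ bound is contained in $B_1$; summing them gives $\|\tilde u \cdot \nabla \omega_1\|_{C^0} \lesssim B_0/M$ and $\|\tilde u \cdot \nabla \omega_1\|_{C^1} \lesssim B_1/M + B_0$, which sit inside $B_0^{1-s}B_1^s M^{-2+2\delta}(M+B_0)$ at $s=0$ and $s=1$.

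For $\omega \cdot \nabla(\omega * \mathcal K)$, no such first-order cancellation is required. Since the only $x_1$-dependence of $\tilde u_j$ is through $\psi(L_1 g_1 x_1)$, one has $\partial_1 \tilde u_j \sim L_1/M$. Consequently $\omega_1 \partial_1 u \sim L_1/M$ and $\omega_3 \partial_3 u \sim (L_1/M)\cdot 1 = L_1/M$, both bounded by $B_0/M$ in $C^0$, with analogous $C^1$ estimates using Lemma~\ref{BS-C12}. The contributions involving $\omega_3 e_3 * \mathcal K$ (in place of $\omega_1 e_1 * \mathcal K$) are smaller by another factor of $L_1/M$. Finally, the log-convex interpolation $\|f\|_{C^s} \le \|f\|_{C^0}^{1-s}\|f\|_{C^1}^s$ promotes the endpoint estimates to the claimed $B_0^{1-s}B_1^s M^{-2+2\delta}(M+B_0)$ bound for all $s \in [0,1]$.

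The principal obstacle is bookkeeping rather than idea. Every slowly varying factor has a different $C^0$ and $C^1$ bound, and one must verify that each of the many residual terms, together with the cross terms involving $\omega_3$ and the Biot--Savart error $\omega_1 e_1 * \mathcal K - \tilde u$, fits inside the compact expression $B_0^{1-s}B_1^s M^{-2+2\delta}(M+B_0)$. The cancellation identity itself is a single-line calculation; the work lies in systematically handling all the sub-leading terms, using a case table analogous to the one already used in the proof of Lemma~\ref{BS-C12}.
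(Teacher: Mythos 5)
Your proposal is correct and follows essentially the same route as the paper's proof: decompose $\omega = \omega_1e_1 + \omega_3e_3$, replace the Biot--Savart velocity by $\tilde u$ via Lemmas \ref{BS-C01} and \ref{BS-C12}, exploit the exact cancellation of the two top-order terms of $\tilde u_2\partial_2\omega_1 + \tilde u_3\partial_3\omega_1$ (which the paper encodes as $\tilde u_2g_2'\cot(Mg_2) + \tilde u_3g_3\cot(Mg_3x_3) = 0$), use the smallness of $\partial_1$ on the velocity and of $\omega_3$ for the stretching term, and interpolate between $C^0$ and $C^1$. The one place where your gloss needs care is the claim $\|\omega_3e_3*\mathcal K\|\lesssim L_1/M^2$: this does not follow from $|\omega_3|\lesssim L_1/M$ and local integrability of $\mathcal K$ alone, but requires reapplying the oscillatory estimates of Lemma \ref{BS-C01} to each piece $I_i$ of the expansion of $\omega_3$, exactly as the paper does.
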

\begin{proof}
Recall
\begin{align*}
\omega_1 &= -g_1(x_2)^{-1}\psi(L_1g_1(x_2)x_1)\sin(Mg_2(x_2))\psi(L_2g_2(x_2))\\
&\times \sin(Mg_3(x_2)x_3)\psi(L_3g_3(x_2)x_3).
\end{align*}
Each factor is $O(1)$, and their higher derivative bounds are (constants ignored):
\begin{center}
\begin{tabular}{|c|c|c|c|} 
 \hline
 Term & First derivative & Second derivative \\ 
 \hline
 $1/g_1(x_2)$ & $\sup|g_1'|$ & $\sup g_1'^2 + \sup|g_1''|$ \\
 \hline
 $\psi(L_1g_1x_1)$ & $L_1 + \sup|g_1'|$ & $L_1^2 + \sup g_1'^2 + \sup|g_1''|$ \\ 
 \hline
 $\sin(Mg_2)$ & $M$ & $M^2 + M\sup|g_2''|$ \\
 \hline
 $\psi(L_2g_2)$ & $L_2$ & $L_2^2 + L_2\sup|g_2''|$ \\ 
 \hline
 $\psi(L_3g_3x_3)$ & $L_3 + \sup|g_3'|$ & $L_3^2 + \sup g_3'^2 + \sup|g_3''|$ \\ 
 \hline
 $\cos(Mg_3x_3)$ & $M(1 + \sup|g_3'|/L_3)$ & $M^2(1 + \sup g_3'^2/L_3^2) + ML_3^{-1}\sup|g_3''|$ \\
 \hline
 $\omega_1$ & $M + B_0$ & $M^2 + B_1$ \\
 \hline
\end{tabular}
\end{center}
Similarly, the corresponding bounds for $\tilde u$ are smaller by a factor of $M$.

Note that $\partial_2\omega_1 - Mg_2'(x_2)\omega_1\cot(Mg_2(x_2))$ is a sum of functions like $\omega_1$ with the following factor replacements,
with corresponding derivative bounds:
\begin{center}
\begin{tabular}{|c|c|c|c|} 
 \hline
 Factor & Replaced by & sup & First derivative \\ 
 \hline
 $1/g_1$ & $-g_1'/g_1^2$ & $\sup|g_1'|$ & $\sup g_1'^2 + \sup|g_1''|$ \\ 
 \hline
 $\psi(L_1g_1x_1)$ & $L_1g_1'\psi'(L_1g_1x_1)x_1$ & $\sup|g_1'|$ & $L_1\sup|g_1'| + \sup g_1'^2 + \sup|g_1''|$ \\ 
 \hline
 $\psi(L_2g_2)$ & $L_2g_2'\psi'(L_2g_2)$ & $L_2$ & $L_2^2 + L_2\sup|g_2''|$ \\ 
 \hline
 $\psi(L_3g_3x_3)$ & $L_3g_3'\psi'(L_3g_3x_3)x_3$ & $\sup|g_3'|$ & $L_3\sup|g_3'| + \sup g_3'^2 + \sup|g_3''|$ \\
 \hline
 $\sin(Mg_3x_3)$ & $Mg_3'\cos(Mg_3x_3)x_3$ & $ML_3^{-1}\sup|g_3'|$ & $* \le B_1$ \\
 \hline
\end{tabular}
\end{center}
where $* = M^2L_3^{-1}\sup|g_3'| + M^2L_3^{-2}\sup g_3'^2 + ML_3^{-1}\sup|g_3''|$.
Then for $j = 0, 1$, using $\|\tilde u\|_{C^j} \lesssim (M + B)^j/M$ and $B_0(M + B_0) \le B_1$ we have that
\begin{align*}
\|\partial_2\omega_1 - Mg_2'(x_2)\omega_1\cot(Mg_2(x_2))\|_{C^j} &\lesssim B_j,\\
\|\tilde u_2\partial_2\omega_1 - \tilde u_2Mg_2'(x_2)\omega_1\cot(Mg_2(x_2))\|_{C^j} &\lesssim B_j/M.
\end{align*}
Also, $\partial_3\omega_1 - Mg_3(x_2)\omega_1\cot(Mg_3(x_2)x_3)$ is $\omega_1$ with the following factor replaced,
with corresponding bounds on $C^0$ and $C^1$ norms:
\begin{center}
\begin{tabular}{|c|c|c|c|} 
 \hline
 Factor & Replaced by & sup & First derivative \\ 
 \hline
 $\psi(L_3g_3x_3)$ & $L_3g_3\psi'(L_3g_3x_3)$ & $L_3$ & $L_3^2 + L_3\sup|g_3'|$ \\
 \hline
\end{tabular}
\end{center}
Then for $j = 0, 1$, using $L_3 + \sup|g_3'| \le B_0$ and $\|\tilde u\|_{C^j} \lesssim (M + B_0)^j/M$ we have that
\begin{align*}
\|\partial_3\omega_1 - Mg_3(x_2)\omega_1\cot(Mg_3(x_2)x_3)\|_{C^j} &\lesssim L_3(M + B_0)^j,\\
\|\tilde u_3\partial_3\omega_1 - \tilde u_3Mg_3(x_2)\omega_1\cot(Mg_3(x_2)x_3)\|_{C^j} &\lesssim L_3(M + B_0)^j/M.
\end{align*}
Since $L_3 \le B_0$ and $M$, $L_3(M + B_0) \le 2MB_0 \le 2B_1$, so the first bound dominates the second one.
Since $\tilde u_2g_2'(x_2)\cot(Mg_2(x_2)) + \tilde u_3g_3(x_2)\cot(Mg_3(x_2)x_3) = 0$,
\[
\|\tilde u\cdot\nabla\omega_1\|_{C^j} \lesssim B_j/M.
\]

Integrating by parts we have
\begin{align*}
\omega_3 &= -L_1\psi'(L_1g_1(x_2)x_1)\sin(Mg_2(x_2))\psi(L_2g_2(x_2))I,\\
I &= \int_{-\infty}^{x_3} \sin(Mg_3(x_2)y)\psi(L_3g_3(x_2)y)dy = I_0 + R_0,\\
I_0 &= -\frac{\cos(Mg_3(x_2)x_3)\psi(L_3g_3(x_2)x_3)}{Mg_3(x_2)},\\
R_0 &= \int_{-\infty}^{x_3} \frac{\cos(Mg_3(x_2)y)L_3\psi'(L_3g_3(x_2)y)}{M}dy\\
&= \int_{-\infty}^{L_3x_3} \frac{\cos(ML_3^{-1}g_3(x_2)y)\psi'(g_3(x_2)y)}{M}dy.
\end{align*}
The derivative bounds are:
\begin{center}
\begin{tabular}{|c|c|c|c|} 
 \hline
 Term & sup & First derivative & Second derivative \\ 
 \hline
 $\psi(L_3g_3x_3)$ & 1 & $L_3 + \sup|g_3'|$ & $L_3^2 + \sup g_3'^2 + \sup|g_3''|$ \\ 
 \hline
 $\cos(Mg_3x_3)$ & 1 & $M(1 + \sup|g_3'|/L_3)$ & $M^2(1 + \sup g_3'^2/L_3^2) + ML_3^{-1}\sup|g_3''|$ \\
 \hline
 $1/g_3(x_2)$ & 1 & $\sup|g_3'|$ & $\sup g_3'^2 + \sup|g_3''|$ \\
 \hline
 $I_0$ & $1/M$ & $1 + \sup|g_3'|/L_3$ & $M(1 + \sup g_3'^2/L_3^2) + \sup|g_3''|/L_3$ \\
 \hline
 $R_0$ & $1/M$ & $\frac{1 + \sup|g_3'|}{L_3} + L_3/M$ & $\frac{M^2(1 + \sup g_3'^2)}{L_3^2} + \sup|g_3''|/L_3 + L_3 + \sup|g_3'|$ \\
 \hline
 $I$ & $1/M$ & $1 + \sup|g_3'|/L_3$ & $M(1 + \sup g_3'^2/L_3^2) + \sup|g_3''|/L_3$ \\
 \hline
\end{tabular}
\end{center}
where we have used $L_3 + \sup|g_3'| \le M(1 + \sup|g_3'|/L_3) \lesssim M(1 + \sup g_3'^2/L_3^2)$
to dominate $R_0$ with $I_0$. Continuing,
\begin{center}
\begin{tabular}{|c|c|c|c|} 
 \hline
 Term & sup & First derivative & Second derivative \\ 
 \hline
 $\psi'(L_1g_1x_1)$ & 1 & $L_1 + \sup|g_1'|$ & $L_1^2 + \sup g_1'^2 + \sup|g_1''|$ \\ 
 \hline
 $\sin(Mg_2)$ & 1 & $M$ & $M^2 + M\sup|g_2''|$ \\
 \hline
 $\psi(L_2g_2)$ & 1 & $L_2$ & $L_2^2 + L_2\sup|g_2''|$ \\ 
 \hline
 $I$ & $1/M$ & $1 + \sup|g_3'|/L_3$ & $M(1 + \sup g_3'^2/L_3^2) + \sup|g_3''|/L_3$ \\
 \hline
 $M\omega_3/L_1$ & 1 & $M + B_0$ & $M^2 + B_1$ \\
 \hline
\end{tabular}
\end{center}
Then for $j = 0, 1$, using $\|\tilde u\|_{C^j} \lesssim (M + B)^j/M$ and $(M + B_0)^2 \le M^2 + B_1$ we have that
\begin{align*}
\|\nabla\omega_3\|_{C^j} &\lesssim L_1(M^{j+1} + B_j)/M,\\
\|\tilde u\cdot\nabla\omega_3\|_{C^j} &\lesssim L_1(M^{j+1} + B_j)/M^2.
\end{align*}
Since $L_1M^j \le B_j$ and $L_1 \le M$, $L_1(M^{j+1} + B_j) \le B_j(L_1 + M) \le 2B_jM$, so
\[
\|\tilde u\cdot\nabla\omega_3\|_{C^j} \lesssim B_j/M,\quad
\|\tilde u\cdot\nabla\omega\|_{C^j} \lesssim B_j/M.
\]
Since $\|\tilde u - \omega_1e_1 * \mathcal K\|_{C^j} \lesssim_\delta B_0M^{(j-2)(1-\delta)}$,
$\|\nabla\omega\|_{C^j} \lesssim M^{j+1} + B_j$ and $M(M + B_0) \le M^2 + B_1$,
\[
\|(\tilde u - \omega_1e_1 * \mathcal K)\cdot\nabla\omega\|_{C^j} \lesssim_\delta B_0M^{-2+2\delta}(M^{j+1} + B_j)
\]
so using $B_0M \le B_1$, $\|(\omega_1e_1 * \mathcal K)\cdot\nabla\omega\|_{C^j} \le B_jM^{-2+2\delta}(M + B_0)$.

Turning to
\begin{align*}
\omega_3 &= -L_1\psi'(L_1g_1(x_2)x_1)\sin(Mg_2(x_2))\psi(L_2g_2(x_2))I,\\
I &= \int_{-\infty}^{x_3} \sin(Mg_3(x_2)s)\psi(L_3g_3(x_2)y)dy = \sum_{i=0}^k I_i + R_k,\\
I_i &= -\frac{\cos(Mg_3(x_2)x_3 + i\pi/2)L_3^i\psi^{(i)}(L_3g_3(x_2)x_3)}{M^{i+1}g_3(x_2)},\\
R_k &= \int_{-\infty}^{L_3x_3} \frac{\cos(ML_3^{-1}g_3(x_2)y + k\pi/2)L_3^k\psi^{(k+1)}(g_3(x_2)y)}{M^{k+1}}dy.
\end{align*}
The term involving $I_i$ has the same form as $\omega_1$, with
\[
a(x_2) = L_1L_3^i/(M^{i+1}g_3(x_2))
\]
so its convolution with the Biot--Savart kernel has an estimate whose main term and error term are both
$L_1L_3^i/M^{i+1}$ times that of $\omega_1$. Then for $j = 0, 1$,
\[
\|(\text{part of $\omega_3$ involving $I_i$})e_3 * \mathcal K\|_{C^j}
\lesssim_\delta L_1L_3^i(M + B_0)M^{(j-2)(1-\delta)-i-1}.
\]
Let $k = [3/\delta] + 1$. since $L_i \le M$ ($i = 1, 2, 3$),
\[
\|\text{part of $\omega_3$ involving $R_k$}\|_{C^j}
\lesssim L_1L_3^kM^{j-k-1} \lesssim_\delta L_1/M^3
\]
so, since supp $\omega_3 \subset \{|x| \le 1\}$ and the Biot--Savart kernel is locally integrable,
\[
\|(\text{part of $\omega_3$ involving $R_k$})e_3 * \mathcal K\|_{C^j}
\lesssim_\delta L_1M^{-3}.
\]
Since $B_j \ge 1$,
summing over $I_0, \dots, I_k$ and $R_k$ we get, for $j = 0, 1$,
\[
\|\omega_3e_3 * \mathcal K\|_{C^j} \lesssim_\delta L_1(M + B_0)M^{(j-2)(1-\delta)-1}.
\]
Since $\|\nabla\omega\|_{C^j} \lesssim M^{j+1} + B_j$, $M(M + B_0) \le M^2 + B_1$ and $L_1(M^{j+1} + B_j) \le B_j(L_1 + M) \le 2B_jM$,
\[
\|(\omega_3e_3 * \mathcal K)\cdot\nabla\omega\|_{C^j} \lesssim_\delta L_1(M^{j+1} + B_j)M^{-3+2\delta}(M + B_0)
\le B_jM^{-2+2\delta}(M + B_0)
\]
so $(\omega * \mathcal K)\cdot\nabla\omega$ also has the same bound.

Turning to the other term, we have
\[
\|\partial_1\tilde u\|_{C^j} \lesssim L_1M^{j-1}. \tag{$j = 0, 1$}
\]
Since $\partial_1\omega_1$ is of the same form as $\omega_1$, with the factor $1/g_1$ replaced by $L_1$,
\[
\|\partial_1(\tilde u - \omega_1e_1 * \mathcal K)\|_{C^j} \lesssim_\delta L_1B_0M^{(j-2)(1-\delta)} \tag{$j = 0, 1$}
\]
so
\[
\|\partial_1(\omega_1e_1 * \mathcal K)\|_{C^j} \lesssim_\delta L_1(M + B_0)M^{(j-2)(1-\delta)}. \tag{$j = 0, 1$}
\]
Since $\|\omega_1\|_{C^j} \lesssim (M + B_0)^j$ ($j = 0, 1$),
\[
\|\omega_1\partial_1(\omega_1e_1 * \mathcal K)\|_{C^j} \lesssim_\delta L_1(M + B_0)^{j+1}M^{-2+2\delta}. \tag{$j = 0, 1$}
\]
Since $\partial_3\omega_1$ is of the same form as $\omega_1$, with the factor $1/g_1$ replaced by $Mg_3/g_1$ and $L_3g_3/g_1$,
the bounds for $\partial_3(\omega_1e_1 * \mathcal K)$ are $M/L_1$ times those of $\partial_1(\omega_1e_1 * \mathcal K)$.
Since the bounds for $\omega_3$ are $L_1/M$ times those of $\omega_1$,
$\omega_3\partial_3(\omega_1e_1 * \mathcal K)$ has the same bounds as $\omega_1\partial_1(\omega_1e_1 * \mathcal K)$,
so $\omega\cdot\nabla(\omega_1e_1 * \mathcal K)$ also has the same bounds.

Since $\partial_1\omega_1$ is of the same form as $\omega_1$, with the factor $1/g_1$ replaced by $L_1$, for $j = 0, 1$,
\[
\|\partial_1(\omega_3e_3 * \mathcal K)\|_{C^j} \lesssim_\delta L_1^2(M + B_0)M^{(j-2)(1-\delta)-1}.
\]
Since $\|\omega_1\|_{C^j} \lesssim (M + B_0)^j$,
\[
\|\omega_1\partial_1(\omega_3e_3 * \mathcal K)\|_{C^j} \lesssim_\delta L_1^2(M + B_0)^{j+1}M^{-3+2\delta}.
\]
Similarly $\omega_3\partial_3\nabla(\omega_3e_3 * \mathcal K)$ also has the same bounds.
Since $L_1(M + B_0)^j \le M(M + B_0)^j \le B_1$,
\[
\|\omega \cdot \nabla(\omega * \mathcal K)\|_{C^j} \lesssim_\delta L_1(M + B_0)^{j+1}M^{-2+2\delta}
\le B_j(M + B_0)M^{-2+2\delta}. \tag{$j = 0, 1$}
\]
The results now follow from interpolation and $B_0 \le MB_0 \le B_1$.
\end{proof}

\begin{lem}\label{BS-C1}
Assume that $g_2(0) = 0$. Then
\begin{align*}
\left| \partial_3(\omega * \mathcal K)_3(0, 0, 0) + \frac{g_2'(0)g_3(0)}{g_1(0)(g_3(0)^2 + g_2'(0)^2)} \right|
&\lesssim_\delta B_0M^{\delta-1},\\
|\nabla(\omega * \mathcal K)_1| &\lesssim_\delta L_1(M + B_0)M^{\delta-2}.
\end{align*}
\end{lem}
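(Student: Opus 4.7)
The plan is to reduce both estimates to Lemma \ref{BS-C01} and to the $C^1$ bound on $\omega_3 e_3 * \mathcal K$ that was established inside the proof of Lemma \ref{quadratic}, by exploiting the following structural cancellation in the Biot--Savart law: for any scalar function $f$, the $j$-th component of $f e_j * \mathcal K$ vanishes identically, since $e_j \times (x - y)$ is perpendicular to $e_j$. Applied to $\omega = (\omega_1, 0, \omega_3)$, this yields $(\omega_1 e_1 * \mathcal K)_1 \equiv 0$ and $(\omega_3 e_3 * \mathcal K)_3 \equiv 0$, so $(\omega * \mathcal K)_3 = (\omega_1 e_1 * \mathcal K)_3$ and $(\omega * \mathcal K)_1 = (\omega_3 e_3 * \mathcal K)_1$.

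For the first estimate, I invoke Lemma \ref{BS-C01} with $J = (0, 0, 1)$. Since $a = 1/g_1$ and $g_1 \in [1/2, 2]$ imply $\sup|a'| \lesssim \sup|g_1'|$, and hence $B \lesssim B_0$, the lemma gives $|\partial_3(\omega_1 e_1 * \mathcal K)_3 - \tilde u_{3, (0,0,1)}| \lesssim_\delta B_0 M^{\delta - 1}$. I then substitute $x = 0$, $g_2(0) = 0$, $a(0) = 1/g_1(0)$, and $(j_1, j_2, j_3) = (0, 0, 1)$ into the explicit formula for $\tilde u_{3, J}$: every sine and cosine at a multiple of $\pi/2$ collapses to $\pm 1$, every cut-off factor reduces to $\psi(0) = 1$, and the denominator $M^{1 - |J|}$ becomes $1$. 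The result is $\tilde u_{3, (0,0,1)}(0) = -g_2'(0) g_3(0) / (g_1(0)(g_3(0)^2 + g_2'(0)^2))$, which together with the structural identity $(\omega * \mathcal K)_3 = (\omega_1 e_1 * \mathcal K)_3$ produces the claimed bound.

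For the second estimate, it suffices to bound $|\nabla(\omega * \mathcal K)_1| \le \|\omega_3 e_3 * \mathcal K\|_{C^1}$. Inside the proof of Lemma \ref{quadratic}, the sharp bound $\|\omega_3 e_3 * \mathcal K\|_{C^j} \lesssim_\delta L_1(M + B_0) M^{(j - 2)(1 - \delta) - 1}$ was obtained by integrating by parts in $x_3$ to expand $\omega_3$ into a finite sum of pieces of the same form as $\omega_1$ (with amplitudes $a = L_1 L_3^i / (M^{i+1} g_3)$) plus a remainder that is negligible after convolution with $\mathcal K$, and then applying Lemma \ref{BS-C01} term-wise. Specializing to $j = 1$ gives exactly $L_1(M + B_0) M^{\delta - 2}$.

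There is no real obstacle: both estimates are essentially bookkeeping, packaging at a single point the leading-order value and a sharp $C^1$ remainder already implicit in Lemmas \ref{BS-C01} and \ref{quadratic}. The only delicacy is recognizing that the cancellation $(\omega_j e_j * \mathcal K)_j \equiv 0$ eliminates what would otherwise be a much larger mixed-component contribution, allowing the right-hand sides to depend only on the (smaller) bounds for $\omega_3 e_3 * \mathcal K$ rather than on the bulkier bounds for $\omega_1 e_1 * \mathcal K$.
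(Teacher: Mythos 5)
Your proof is correct and follows essentially the same route as the paper's: both estimates come from Lemma \ref{BS-C01} (evaluated at the origin with $J=(0,0,1)$, where $\tilde u_{3,J}(0)$ reduces to the claimed constant) combined with the $C^1$ bound $\|\omega_3 e_3 * \mathcal K\|_{C^1} \lesssim_\delta L_1(M+B_0)M^{\delta-2}$ from the proof of Lemma \ref{quadratic}, plus the vanishing of the $j$-th component of $\omega_j e_j * \mathcal K$. The only cosmetic difference is that for the first estimate the paper bounds the $\omega_3 e_3 * \mathcal K$ contribution and observes it is dominated by $B_0 M^{\delta-1}$ (using $L_1 M, L_1 B_0 \le B_0 M$), whereas you note that $(\omega_3 e_3 * \mathcal K)_3$ vanishes identically; both are fine.
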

\begin{proof}
In the proof of Lemma \ref{quadratic} we saw
$\|\omega_3e_3 * \mathcal K\|_{C^1} \lesssim_\delta L_1(M + B_0)M^{\delta-2}$.
Since $(\omega_1e_1 * \mathcal K)_1 = 0$, we get the second bound.
The second term in the first bound is exactly $-\partial_3\tilde u_3(0, 0, 0)$,
so by combining Lemma \ref{BS-C01} and the $C^1$ bound for $\omega_3e_3 * \mathcal K$,
we get the first bound, which dominates the second one because $L_1M$ and $L_1B_0 \le B_0M$.
\end{proof}

Then we control the error in the bilinear interaction between the outer and inner vortex layers.
Let $D = \max\{|x|: x \in \supp\omega\}$.
\begin{lem}\label{farfield}
There is an absolute constant $C$ such that if $|x| > CD$,
then for all $j, n = 0, 1, \dots$, $|\nabla^j(\omega * \mathcal K)(x)| \lesssim_{j,n,\delta} M^{-n}$.
\end{lem}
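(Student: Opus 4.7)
The idea is to exploit the mismatch between the high-frequency oscillation of $\omega$ (at frequency $M$) and the smoothness of the Biot--Savart kernel far from $\supp\omega$: each integration by parts against the oscillation will produce a factor bounded by $M^{-\delta}$, and iterating enough times yields decay faster than any prescribed power $M^{-n}$.

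More concretely, I would take $C=2$. Then for $|x|>2D$ and all $y\in\supp\omega$ the distance $|x-y|$ is bounded below by $|x|/2\ge D$, so $y\mapsto\nabla_x^j\mathcal K(x-y)$ together with all its $y$-derivatives are smooth on $\supp\omega$ with $L^\infty$ bounds that are $M$-independent. Since the $x$-derivatives $\nabla_x^j$ may be pushed onto the kernel, it suffices to bound $\int \tilde{\mathcal K}(x-y)\,\omega(y)\,dy$ where $\tilde{\mathcal K}(x-\cdot)$ is now a generic smooth function on $\supp\omega$ with $M$-independent derivative bounds. Next, I would integrate by parts in $y_3$ against $\sin(Mg_3(y_2)y_3)$ in $\omega_1$, using
\[
\sin(Mg_3(y_2)y_3)=-\frac{1}{Mg_3(y_2)}\partial_{y_3}\cos(Mg_3(y_2)y_3),
\]
valid because $g_3\ge 1/2$. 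Each $\partial_{y_3}$ lands either on $\psi(L_3g_3(y_2)y_3)$, costing at most $L_3\le M^{1-\delta}$, or on $\tilde{\mathcal K}$, costing an $M$-independent constant; boundary terms vanish by the compact support of $\omega$. Thus each integration by parts produces a net gain bounded by $M^{-\delta}$. After $k=\lceil n/\delta\rceil$ iterations the integrand is pointwise $O_{n,\delta}(M^{-n})$, and integrating over the bounded $\supp\omega$ gives the claim for $\omega_1 e_1*\mathcal K$.

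For the $\omega_3 e_3*\mathcal K$ piece I would first apply the integration-by-parts decomposition $I=I_0+R_0$ from the proof of Lemma \ref{quadratic}, after which $\omega_3$ inherits an explicit oscillatory factor $\cos(Mg_3(y_2)y_3)$ together with an $M^{-1}$ prefactor, and then the same iterated IBP argument applies. The main step to execute carefully is the bookkeeping for each integration by parts, since the derivative splits into a sum of terms according to which factor it hits; with the standing assumptions on $L_i$, $g_i$ and $a$ from the start of Section \ref{om} it is routine to verify that no differentiated factor ever contributes more than $L_3\le M^{1-\delta}$, so the $M^{-\delta}$ gain per step is genuine.
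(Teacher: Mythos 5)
Your overall strategy --- repeated integration by parts in $y_3$ against the frequency-$M$ oscillation, gaining a factor $M^{-\delta}$ per step and iterating $O(n/\delta)$ times --- is the same as the paper's, which performs exactly this integration by parts in $h_3$ (recycled from the proof of Lemma \ref{BS-C01}). But there is a genuine error in your key quantitative claim. You assert that for $|x|>2D$ the functions $y\mapsto\nabla_x^j\mathcal K(x-y)$ and all their $y$-derivatives are bounded on $\supp\omega$ by $M$-independent constants. This is false: $D$ is comparable to $1/L_1$ (and $1/L_3$), which can be as small as $M^{-1+\delta}$, and the only lower bound your choice $C=2$ gives is $|x-y|>D$. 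Hence $|\nabla^{j+k}\mathcal K(x-y)|\sim|x-y|^{-2-j-k}$ can be as large as $M^{(1-\delta)(2+j+k)}$. So the branch of your integration by parts in which the derivative falls on the kernel does not cost ``an $M$-independent constant''; it costs a factor $\lesssim 1/|x-y|\lesssim L_1\le M^{1-\delta}$, and the undifferentiated kernel already contributes $D^{-2-j}$, so the integrand after $k$ steps is not pointwise $O(M^{-\delta k})$ as you claim. The argument is repairable --- the kernel cost per derivative is no worse than the cutoff cost $L_3\le M^{1-\delta}$, so the net gain per step is still $M^{-\delta}$, and the accumulated fixed power of $D^{-1}$ is absorbed by the volume $|\supp\omega|\lesssim D^3$ or by a few extra integrations by parts --- but this bookkeeping is precisely what is missing, and it is the substance of the paper's proof, which tracks the powers of $(h_1^2+h_2^2)^{-1/2}$ and $|h|^{-1}$ explicitly using the lower bound $\gtrsim M^{-1+\delta}$ on the relevant distances.

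The second gap is in the $\omega_3$ piece. A single step $I=I_0+R_0$ does not suffice: $R_0$ is only $O(1/M)$ in sup norm and, being an antiderivative of an oscillatory function, carries no explicit oscillatory factor in $y_3$ against which to integrate by parts; its contribution to $\omega_3e_3*\mathcal K$ is therefore only $O(1/M)$, which is not $O(M^{-n})$ for $n\ge2$. You need the full iterated decomposition $I=\sum_{i=0}^kI_i+R_k$ with $k\approx n/\delta$, as the paper uses, so that the remainder $R_k=O(L_3^kM^{-k-1})=O(M^{-1-\delta k})$ can be estimated trivially via the local integrability of the kernel, while each $I_i$ carries the explicit factor $\cos(Mg_3(y_2)y_3)$ with prefactor $L_3^iM^{-i-1}$ and is handled by the same argument as $\omega_1$.
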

\begin{proof}
We first show the result for $j = 0$. Recall that
\[
\omega_1e_1 * \mathcal K = \int_{\mathbb R^3} \frac{h\times e_1}{4\pi|h|^3}\omega_1(x + h)dh.
\]
In the proof of Lemma \ref{BS-C01} we have shown, for any $k \in \mathbb N$,
\begin{align*}
&\left| \int_{\mathbb R} \frac{h\times e_1\sin(Mg_3(x_2 + h_2)(x_3 + h_3))\psi(L_3g_3(x_2 + h_2)(x_3 + h_3))}{|h|^3}dh_3 \right|\\
&\lesssim_k M^{-k}\sum_{j=0}^k L_3^j(h_1^2 + h_2^2)^{-(1+k-j)/2}.
\end{align*}
Since $D \gtrsim 1/L_1 \ge M^{-1+\delta}$, if $|x_i|$ or $|x_2| > CD$ then
on the support of $\omega_1$, $h_1^2 + h_2^2 \ge M^{-2+2\delta}$,
so we get the desired bound as in the proof of Lemma \ref{BS-C01}.
If $|x_3| > CD$, then similarly on the support of $\omega_1$, $|h_3| \ge M^{-1+\delta}$,
so when integrating by parts in $h_3$ we get
\begin{align*}
&\left| \int_{\mathbb R} \frac{h\times e_1\sin(Mg_3(x_2 + h_2)(x_3 + h_3))\psi(L_3g_3(x_2 + h_2)(x_3 + h_3))}{|h|^3}dh_3 \right|\\
&\lesssim_k M^{-k}\sum_{j=0}^k L_3^j\int_{M^{-1+\delta}}^\infty \frac{|h_1| + |h_2| + h_3}{|h|^{3+k-j}}dh_3
\le M^{-k}\sum_{j=0}^k L_3^j\int_{M^{-1+\delta}}^\infty \frac{3dh_3}{|h|^{2+k-j}}\\
&\lesssim_k M^{1-k}\sum_{j=0}^k L_3^jM^{(1-\delta)(k-j)} \lesssim_k M^{1-\delta k}
\lesssim_{n,\delta} M^{-n}. \tag{$k := [\frac{n+1}{\delta}] + 1$}
\end{align*}
Now we integrate in $h_1$ and $h_2$, noting that the area of the domain of integration is $\pi$.

Turning to
\begin{align*}
\omega_3 &= -L_1\psi'(L_1g_1(x_2)x_1)\sin(Mg_2(x_2))\psi(L_2g_2(x_2))I,\\
I &= \int_{-\infty}^{x_3} \sin(Mg_3(x_2)s)\psi(L_3g_3(x_2)y)dy = \sum_{i=0}^k I_i + R_k,\\
I_i &= -\frac{\cos(Mg_3(x_2)x_3 + i\pi/2)L_3^i\psi^{(i)}(L_3g_3(x_2)x_3)}{M^{i+1}g_3(x_2)},\\
R_k &= \int_{-\infty}^{L_3x_3} \frac{\cos(ML_3^{-1}g_3(x_2)y + k\pi/2)L_3^k\psi^{(k+1)}(g_3(x_2)y)}{M^{k+1}}dy.
\end{align*}
Let $k = [n/\delta] + 1$. Since $L_3 \le M^{1-\delta}$,
\[
\|\text{part of $\omega_3$ involving $R_k$}\|_{C^0}
\lesssim_k L_1L_3^kM^{-k-1} \lesssim_{n,\delta} M^{-n}
\]
so, since supp $\omega_3 \subset \{|x| \le 1\}$ and the Biot--Savart kernel is locally integrable,
\[
\|(\text{part of $\omega_3$ involving $R_k$})e_3 * \mathcal K\|_{C^0}
\lesssim_{n,\delta} M^{-n}.
\]
Since the terms involving $I_i$ has similar estimates as $L_1L_3^iM^{-i-1}\omega_1$, and $L_1, L_3 \le M$,
their convolutions with the Biot--Savart kernel has the same bound as $\omega_1e_1 * \mathcal K$,
so does $\omega_3e_3 * \mathcal K$, and hence so does $\omega * \mathcal K$.

For $j > 0$, we fall the derivatives on the Biot--Savart kernel.
Since $|h| \ge M^{1-\delta}$, the bound is larger by a factor of $M^{j(1-\delta)}$,
so the result still holds.
\end{proof}

\begin{lem}\label{inner-interaction}
Let $\Omega$ be an axial vector field. Then
\begin{align*}
\|(\omega * \mathcal K) \cdot \nabla\Omega\|_{C^j}
&\lesssim_\delta (M + B_0)\sum_{j_1+j_2=j}M^{(j_1-2)(1-\delta)}\|\Omega\|_{C^{j_2+1}}. \tag{$j = 0, 1$}\\
\|\Omega \cdot \nabla(\omega * \mathcal K)\|_{C^s}
&\lesssim_\delta (M + B_0)M^{(s-1)(1-\delta)}D\|\Omega\|_{C^1}. \tag{$s \in [0, 1]$}
\end{align*}
\end{lem}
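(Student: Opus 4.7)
The plan is to handle the two inequalities separately: the first by the Leibniz rule for $C^j$ norms, and the second by establishing the endpoints $s=0$ and $s=1$ and then interpolating.

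For the first inequality, I apply the product rule
\[
\|(\omega * \mathcal K) \cdot \nabla \Omega\|_{C^j} \lesssim \sum_{j_1+j_2=j} \|\omega * \mathcal K\|_{C^{j_1}} \|\Omega\|_{C^{j_2+1}}
\]
and plug in the bound $\|\omega * \mathcal K\|_{C^{j_1}} \lesssim_\delta (M+B_0) M^{(j_1-2)(1-\delta)}$ for $j_1 = 0, 1$. This bound comes from combining Corollaries \ref{BS-C01-cor} and \ref{BS-C12-cor} (which control $\omega_1 e_1 * \mathcal K$ via the explicit approximation $\tilde u$) with the estimate $\|\omega_3 e_3 * \mathcal K\|_{C^{j_1}} \lesssim_\delta L_1(M+B_0) M^{(j_1-2)(1-\delta)-1}$ derived inside the proof of Lemma \ref{quadratic}, noting that $L_1 \le M$.

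For the second inequality, the key structural input is that $\Omega$ is axial, so $\Omega(0) = 0$ and therefore $|\Omega(x)| \le |x| \|\Omega\|_{C^1}$. For $s=0$, I split space into the near-field $\{|x| \le CD\}$ and the far-field $\{|x| > CD\}$, where $C$ is the constant from Lemma \ref{farfield}. In the near-field, $|\Omega(x)| \le CD \|\Omega\|_{C^1}$ pairs with $\|\nabla(\omega * \mathcal K)\|_{L^\infty} \lesssim_\delta (M+B_0) M^{-(1-\delta)}$ to yield the desired bound; in the far-field, Lemma \ref{farfield} provides $|\nabla(\omega * \mathcal K)(x)| \lesssim_{n,\delta} M^{-n}$ for arbitrarily large $n$, easily absorbing the uniform bound on $\Omega$. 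For $s=1$, Leibniz gives $\nabla(\Omega \cdot \nabla u) = (\nabla \Omega) \cdot \nabla u + \Omega \cdot \nabla^2 u$ where $u = \omega * \mathcal K$; the first summand is absorbed into $D(M+B_0) \|\Omega\|_{C^1}$ using $D \gtrsim M^{-(1-\delta)}$ (which holds because $\omega$ is supported in a box of side at least $1/L_i \ge M^{-(1-\delta)}$), while the second summand is again split near/far, using $\|\nabla^2 u\|_{L^\infty} \lesssim M+B_0$ in the near-field and Lemma \ref{farfield} in the far-field. Finally, the standard Hölder interpolation $\|f\|_{C^s} \lesssim \|f\|_{L^\infty}^{1-s} \|f\|_{C^1}^s$ yields the bound for all $s \in (0, 1)$, and a direct computation confirms that $A^{1-s}B^s = D(M+B_0)M^{(s-1)(1-\delta)}\|\Omega\|_{C^1}$ with $A, B$ the $s=0$ and $s=1$ bounds.

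The main technical obstacle I anticipate is justifying $\|\nabla^2 u\|_{L^\infty} \lesssim M + B_0$, since Corollary \ref{BS-C12-cor} a priori gives only $\|\omega_1 e_1 * \mathcal K\|_{C^2} \lesssim_\delta M + B_1 M^{-1+\delta}$; one must verify that the term $B_1 M^{-1+\delta}$ is dominated by $M + B_0$ under the standing assumptions ($L_i \in [1, M^{1-\delta}]$ together with appropriate bounds on the $g_i$ and their derivatives). Once this absorption is in place, the rest of the argument is routine bookkeeping of the Leibniz rule together with the far-field decay from Lemma \ref{farfield}.
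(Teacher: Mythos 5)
Your proof follows essentially the same route as the paper's: the first bound via the Leibniz rule together with $\|\omega * \mathcal K\|_{C^{j_1}} \lesssim_\delta (M+B_0)M^{(j_1-2)(1-\delta)}$, and the second via the near/far decomposition (Lemma \ref{farfield} in the far field, the vanishing $\Omega(0)=0$ supplying the factor $D$ in the near field) at the endpoints $j=0,1$ followed by H\"older interpolation. The one obstacle you flag --- absorbing $B_1M^{-1+\delta}$ into $M+B_0$ so that $\|\nabla(\omega*\mathcal K)\|_{C^1}\lesssim M+B_0$ --- is passed over silently in the paper's own proof as well (it simply asserts $\|\nabla(\omega * \mathcal K)\|_{C^j} \lesssim_\delta (M+B_0)M^{(j-1)(1-\delta)}$), and your use of $D \gtrsim M^{-(1-\delta)}$ (rather than the paper's stated $D \gtrsim M^{-1}$) is the correct form of the inequality needed to absorb the cross term $(\nabla\Omega)\cdot\nabla(\omega*\mathcal K)$.
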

\begin{proof}
The first bound follows from $\|\omega * \mathcal K\|_{C^j} \lesssim_\delta (M + B_0)M^{(j-2)(1-\delta)}$ ($j = 0, 1$).

For the second bound, when $|x_i| > C/L_i$ for some $i = 1, 2, 3$, by Lemma \ref{farfield},
$\|\nabla(\omega * \mathcal K)\|_{C^1} \lesssim_\delta M^{-1}$
so $\|\Omega \cdot \nabla(\omega * \mathcal K)\|_{C^j}
\lesssim_\delta M^{-1}\|\Omega\|_{C^j}$ for $j = 0, 1$.
When $|x_i| \le C/L_i$ for all $i = 1, 2, 3$, since $\Omega$ is odd, $\Omega(0) = 0$, so
$\|\Omega\|_{C^j} \lesssim D^{1-j}\|\Omega\|_{C^1}$ for $j = 0, 1$.
Since $\|\nabla(\omega * \mathcal K)\|_{C^j} \lesssim_\delta (M + B_0)M^{(j-1)(1-\delta)}$,
\begin{align*}
\|\Omega \cdot \nabla(\omega * \mathcal K)\|_{C^j}
&\lesssim_\delta \sum_{j_1+j_2=j} (M + B_0)M^{(j_1-1)(1-\delta)}D^{1-j_2}\|\Omega\|_{C^1}\\
&\lesssim_\delta (M + B_0)M^{(j-1)(1-\delta)}D\|\Omega\|_{C^1}. \tag{$j = 0, 1$}
\end{align*}
because $D \gtrsim M^{-1}$. This dominates the previous bound
$\|\Omega \cdot \nabla(\omega * \mathcal K)\|_{C^j} \lesssim_\delta M^{-1}\|\Omega\|_{C^j}$.
The result follows from interpolation.
\end{proof}

\begin{lem}\label{outer-interaction}
Let $\Omega$ be an axial vector field. Let
\[
U(x_1,x_2,x_3) = (x_1\partial_1(\Omega * \mathcal K)_1(0, x_2, 0), (\Omega * \mathcal K)_2(0, x_2, 0), x_3\partial_3(\Omega * \mathcal K)_3(0, x_2, 0)).
\]
Then for $j = 0, 1$,
\begin{align*}
\|(\Omega * \mathcal K - U) \cdot \nabla\omega\|_{C^j}
&\lesssim_\delta (M^{j+1} + B_j)(1/L_1 + 1/L_3)^2D^{1-\delta}\|\Omega\|_{C^2},\\
\|\omega \cdot \nabla(\Omega * \mathcal K - U)\|_{C^j}
&\lesssim_\delta (M + B_0)^j(1/L_1 + 1/L_3)D^{1-\delta}\|\Omega\|_{C^2}.
\end{align*}
\end{lem}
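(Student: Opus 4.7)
The strategy is to show that $W := V - U$, where $V = \Omega * \mathcal K$, vanishes together with its first-order Taylor expansion in $(x_1, x_3)$ at the line $\{x_1 = x_3 = 0\}$, and that the few surviving second-order terms are themselves small when $|x_2|$ is small. Since $\Omega$ is axial, a direct check with the Biot--Savart formula gives the refined parities: $V_j$ is odd in $x_j$ and even in $x_k$ for $k \ne j$. These parities force $V(0, x_2, 0) = V_2(0, x_2, 0)\, e_2$ and make $\partial_1 V_1(0, x_2, 0)\, e_1$ and $\partial_3 V_3(0, x_2, 0)\, e_3$ the only non-vanishing first derivatives at $(0, x_2, 0)$; hence $U$ is precisely the first-order Taylor polynomial of $V$ at $(0, x_2, 0)$ in the $(x_1, x_3)$ variables, so $W(0, x_2, 0) = 0$ and $\nabla W(0, x_2, 0) = 0$.

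Examining the Hessian of $W$ at $(0, x_2, 0)$, the parities force every entry to vanish except $\partial_1^2 W_2 = \partial_1^2 V_2$ and $\partial_3^2 W_2 = \partial_3^2 V_2$. The key observation is that $V_2$ is additionally odd in $x_2$, so these two quantities vanish at $(0, 0, 0)$; combined with the Calder\'on--Zygmund H\"older estimate $\|V\|_{C^{2, 1-\delta}} \lesssim_\delta \|\Omega\|_{C^{1, 1-\delta}} \lesssim \|\Omega\|_{C^2}$ this yields
\[
|\partial_j^2 V_2(0, x_2, 0)| \lesssim_\delta |x_2|^{1-\delta}\|\Omega\|_{C^2} \qquad (j = 1, 3).
\]

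Taylor expanding $W$ in $(x_1, x_3)$ around $(0, x_2, 0)$ to second order, the zeroth, first, and all but two second-order terms vanish; the surviving second-order contribution is $\lesssim_\delta (x_1^2 + x_3^2)|x_2|^{1-\delta}\|\Omega\|_{C^2}$, while the H\"older Taylor remainder is controlled by $|x_\perp|^{3-\delta}\|\Omega\|_{C^2}$. Using $|x_\perp| \le 1/L_1 + 1/L_3$, $|x_\perp| \le D$ and $|x_2| \le D$ on $\supp\omega$, both contributions are bounded by $(1/L_1 + 1/L_3)^2 D^{1-\delta}\|\Omega\|_{C^2}$. An analogous Taylor argument, now starting from $\nabla W(0, x_2, 0) = 0$, yields $|\nabla W(x)| \lesssim_\delta (1/L_1 + 1/L_3) D^{1-\delta}\|\Omega\|_{C^2}$ on $\supp\omega$.

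The conclusion follows on multiplying these pointwise bounds by $\|\nabla \omega\|_{C^j} \lesssim M^{j+1} + B_j$ for the first inequality and by $\|\omega\|_{C^j} \lesssim (M + B_0)^j$ for the second, as read off from the derivative tables in the proof of Lemma \ref{quadratic}. For the $C^1$ versions one distributes the extra derivative via the product rule, using an analogous pointwise estimate for $\nabla^2 W$ (whose only non-vanishing entries at $(0, x_2, 0)$ are small in $|x_2|^{1-\delta}$) to handle the case where the derivative lands on $W$, and the $C^1$ bounds on $\omega$, $\nabla \omega$ to handle the other case. The main obstacle is identifying the $|x_2|^{1-\delta}$ smallness of $\partial_j^2 V_2(0, x_2, 0)$: without combining the $x_2$-oddness of $V_2$ with a non-integer H\"older bound for Biot--Savart, the $V_2$-contribution to $W$ would only yield a bound of order $(1/L_1 + 1/L_3)^2\|\Omega\|_{C^2}$, missing the crucial factor of $D^{1-\delta}$.
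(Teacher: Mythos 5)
Your proposal is correct and follows essentially the same route as the paper: you identify $U$ as the first-order Taylor polynomial of $\Omega * \mathcal K$ in $(x_1,x_3)$ along the $x_2$-axis, use the axial/polar parities to kill all Hessian entries except $\partial_1^2(\Omega*\mathcal K)_2$ and $\partial_3^2(\Omega*\mathcal K)_2$, extract the factor $|x_2|^{1-\delta}$ from the oddness of $(\Omega*\mathcal K)_2$ in $x_2$ together with the Schauder bound $\|\Omega*\mathcal K\|_{C^{2,1-\delta}}\lesssim_\delta\|\Omega\|_{C^2}$, and control the remainder by $|x_\perp|^{3-\delta}$ before multiplying by the $C^j$ bounds on $\omega$ and $\nabla\omega$. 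This is exactly the paper's argument, merely organized around the Hessian of $W$ at the line rather than component by component.
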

\begin{proof}
Since $\Omega$ is axial, for $i = 1, 3$,
$(\Omega * \mathcal K - U)_i(\cdot, x_2, \cdot)$ vanishes to order 3 at (0, 0).
Then for $\beta = 1 - \delta \in (0, 1)$,
\[
|\nabla^2(\Omega * \mathcal K - U)_i(x_1, x_2, x_3)|
\le (|x_1|^\beta + |x_3|^\beta)\|\Omega * \mathcal K\|_{C^{2+\beta}}
\lesssim_\beta (|x_1|^\beta + |x_3|^\beta)\|\Omega\|_{C^2}
\]
so by Taylor's theorem with the Lagrange remainder, for $j = 0, 1, 2$,
\[
|\nabla^j(\Omega * \mathcal K - U)_i(x_1, x_2, x_3)|
\lesssim_\delta (|x_1|^{3-j+\delta} + |x_3|^{3-j+\delta})\|\Omega\|_{C^2}.
\]
For the second component, since $\Omega$ is odd, both $(\Omega * \mathcal K)_2$ and $U_2$ vanish when $x_2 = 0$,
so does $\nabla_{1,3}^2(\Omega * \mathcal K - U)_2$, so
\[
|\nabla_{1,3}^2(\Omega * \mathcal K - U)_2(x_1, x_2, x_3)|
\le |x_2|^{1-\delta}\|\Omega * \mathcal K\|_{C^{3-\delta}}
\lesssim_\delta |x_2|^{1-\delta}\|\Omega\|_{C^2}.
\]
Since $(\Omega * \mathcal K - U)_2(\cdot, x_2, \cdot)$ vanishes to order 2 at (0, 0), for $j = 0, 1, 2$,
\[
|\nabla^j(\Omega * \mathcal K - U)_2(x_1, x_2, x_3)|
\lesssim_\delta (|x_1|^{2-j} + |x_3|^{2-j})|x_2|^{1-\delta}\|\Omega\|_{C^2}.
\]
Since on the support of $\omega$, $|x_i| \lesssim 1/L_i$ ($i = 1, 3$) and $|x_2| \le D$,
\[
\|\Omega * \mathcal K - U\|_{C^j(\supp\omega)}
\lesssim_\delta (1/L_1 + 1/L_3)^{2-j}D^{1-\delta}\|\Omega\|_{C^2}.
\]
Since $\|\omega\|_{C^j} \lesssim (M + B_0)^j$, $\|\nabla\omega\|_{C^j} \lesssim M^{j+1} + B_j$ ($j = 0, 1$)
and $L_1, L_3 \le M$, the result follows.
\end{proof}

Finally we estimate the dissipation and control the error.
\begin{lem}\label{nabla-alpha}
For $\alpha \in (0, 1)$,
\[
||\nabla|^\alpha\omega_1 - M^\alpha(g_2'(x_2)^2 + g_3(x_2)^2)^{\alpha/2}\omega_1|
\lesssim_{\alpha,\delta} BM^{(\alpha-1)(1-\delta)}
\]
where $B = \sum_{i=1}^3L_i + L_3\ln M + \sup|a'| + \sup|g_1'| + ML_3^{-1}(\sup|g_3'| + \sup|g_2''|)$.
\end{lem}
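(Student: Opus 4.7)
The plan is to analyze $|\nabla|^\alpha\omega_1$ through its singular integral representation
\[
|\nabla|^\alpha f(x) = c_{3,\alpha}\,\mathrm{PV}\int_{\mathbb R^3}\frac{f(x)-f(x+h)}{|h|^{3+\alpha}}\,dh,
\]
viewing $\omega_1$ locally as a plane wave. Decomposing the two sines into complex exponentials expresses $\omega_1$ as a linear combination of terms $c\,A(x)e^{iM\Phi(x)}$, with amplitude $A(x)=a(x_2)\psi(L_1g_1x_1)\psi(L_2g_2)\psi(L_3g_3x_3)$ of size $O(1)$ and phase $\Phi(x)=\epsilon_2g_2(x_2)+\epsilon_3g_3(x_2)x_3$ with $\epsilon_2,\epsilon_3\in\{\pm1\}$. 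Since $|\nabla|^\alpha$ acts on a plane wave $e^{iM\xi\cdot h}$ by multiplication by $M^\alpha|\xi|^\alpha$, and on $\supp\omega_1$ the bound $|x_3|\lesssim 1/L_3$ gives
\[
|\nabla\Phi(x)|^2 = (g_2'(x_2)+\epsilon_2\epsilon_3 g_3'(x_2)x_3)^2+g_3(x_2)^2 = g_2'(x_2)^2+g_3(x_2)^2+O(\sup|g_3'|/L_3),
\]
the claimed coefficient $M^\alpha(g_2'^2+g_3^2)^{\alpha/2}$ agrees with the frozen-phase prediction $M^\alpha|\nabla\Phi|^\alpha$ up to an error absorbed by the $ML_3^{-1}\sup|g_3'|$ summand of $B$, using $g_3\ge 1/2$ to control the derivatives of $r\mapsto r^{\alpha/2}$.

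I then split the integration at the natural scale $|h|=M^{-1+\delta}$. On the near field $\{|h|\le M^{-1+\delta}\}$, I Taylor-expand both the amplitude (to first order, with remainder $O(|h|\,\|A\|_{C^1})$ where $\|A\|_{C^1}\lesssim\sum_i L_i+\sup|a'|+\sup|g_i'|$) and the phase (to first order, with remainder of size $|h|^2(\sup|g_2''|+\sup|g_3'|+\sup|g_3''|/L_3)$). The linearized piece evaluates exactly to $A(x)e^{iM\Phi(x)}M^\alpha|\nabla\Phi(x)|^\alpha$ via the identity
\[
c_{3,\alpha}\int_{\mathbb R^3}\frac{1-e^{iM\nabla\Phi(x)\cdot h}}{|h|^{3+\alpha}}\,dh=M^\alpha|\nabla\Phi(x)|^\alpha,
\]
while the two remainder pieces, integrated against $|h|^{-3-\alpha}$ over $\{|h|\le M^{-1+\delta}\}$, produce the $\|A\|_{C^1}M^{(\alpha-1)(1-\delta)}$ and $M(\sup|g_2''|+\sup|g_3'|+\sup|g_3''|/L_3)M^{(\alpha-2)(1-\delta)}$ error bounds. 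On the far field $\{|h|>M^{-1+\delta}\}$, repeated integration by parts in $h_2$ and $h_3$, using $|\partial_2\Phi|+|\partial_3\Phi|\gtrsim g_3\gtrsim 1$, gains a factor of $M^{-1}$ per iteration and, after $k\sim\lceil(n+1)/\delta\rceil$ iterations, yields $O_{\alpha,\delta,n}(M^{-n})$, exactly as in the proof of Lemma \ref{BS-C01}; the same argument controls the corresponding far-field tail of the plane-wave identity that must be subtracted.

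The main obstacle I anticipate is the careful bookkeeping required to match every summand of $B=\sum_i L_i+L_3\ln M+\sup|a'|+\sup|g_1'|+ML_3^{-1}(\sup|g_3'|+\sup|g_2''|)$ against a specific source with constants depending only on $\alpha$ and $\delta$: the pieces $\sum_i L_i+\sup|a'|+\sup|g_1'|$ come from $\|A\|_{C^1}$; $ML_3^{-1}\sup|g_3'|$ absorbs both the $g_3'x_3$ crossterm in $|\nabla\Phi|$ and the $x_3$-derivative of the cutoff $\psi(L_3g_3x_3)$ in $\|A\|_{C^1}$; $ML_3^{-1}\sup|g_2''|$ absorbs the second-order Taylor remainder in the phase (where the constraint $L_3\le M^{1-\delta}$ is crucial to absorb the $\sup|g_3'|+\sup|g_3''|/L_3$ contributions into this summand); and the logarithmic $L_3\ln M$ correction arises when integrating an $|h_3|$-moment of $|h|^{-3-\alpha}$ over the slab $|h_3|\lesssim 1/L_3$ forced by the $x_3$-support of $\omega_1$, producing a logarithm just as in the proof of Lemma \ref{BS-C01}. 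Fitting all three contributions (amplitude, phase, and far-field) simultaneously into the single budget $BM^{(\alpha-1)(1-\delta)}$ is the delicate step.
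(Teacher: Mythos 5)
Your overall strategy is the same as the paper's: write $|\nabla|^\alpha$ as a singular integral, split at the scale $M^{-1+\delta}$, kill the far field by non-stationary-phase integration by parts in $h_3$ (the paper does this only in $h_3$, using $g_3\ge 1/2$; your appeal to integration by parts ``in $h_2$ and $h_3$'' is at best unnecessary, since $\partial_{h_2}$ of the phase is not bounded below), freeze $\omega_1(x+h)$ to a plane wave in the near field, and read off the symbol. However, there is a genuine gap in how you linearize the phase. You Taylor-expand the full phase $\Phi(x+h)=\epsilon_2g_2(x_2+h_2)+\epsilon_3g_3(x_2+h_2)(x_3+h_3)$ to first order jointly in $(h_2,h_3)$, and the second-order remainder contains the term $g_3''(x_2+\theta h_2)(x_3+h_3)h_2^2$, i.e.\ a contribution of size $M\sup|g_3''|L_3^{-1}|h_2|^2$, which after integration against $|h|^{-3-\alpha}$ over $\{|h|\le M^{-1+\delta}\}$ gives $M^{\delta}L_3^{-1}\sup|g_3''|\cdot M^{(\alpha-1)(1-\delta)}$. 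The quantity $\sup|g_3''|$ does not appear anywhere in $B$ (it appears only in $B_1$ of Lemma \ref{BS-C12}), and your proposed absorption of $\sup|g_3''|/L_3$ into the summand $ML_3^{-1}\sup|g_2''|$ cannot work: $g_3''$ and $g_2''$ are independent, and the section's standing assumptions place no bound on $g_3''$. As written, your argument proves the lemma only with $B$ enlarged by $M^{\delta}L_3^{-1}\sup|g_3''|$, which is a strictly weaker statement than the one claimed.

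The fix is the paper's choice of comparison wave: do not use the full first-order Taylor polynomial $\Phi(x)+\nabla\Phi(x)\cdot h$. Instead keep the $h_3$-phase exactly as $g_3(x_2)(x_3+h_3)$ and remove the $h_2$-dependence of the factor $e^{iMg_3(\cdot)(x_3+h_3)}$ by the mean value theorem, at cost $M|x_3+h_3|\sup|g_3'|\,|h_2|\lesssim ML_3^{-1}\sup|g_3'|\,|h_2|$, while Taylor-expanding only $g_2(x_2+h_2)$ to first order, at cost $M\sup|g_2''|h_2^2\le ML_3^{-1}\sup|g_2''|\,|h_2|$ (using $|h_2|\le M^{-1+\delta}\le L_3^{-1}$). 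This requires only $g_2''$ and $g_3'$, exactly the quantities in $B$, and it also makes the frozen frequency exactly $(0,g_2'(x_2),g_3(x_2))$, so the symbol is $(g_2'^2+g_3^2)^{\alpha/2}$ on the nose and your extra correction step (replacing $|\nabla\Phi|^\alpha$ by $(g_2'^2+g_3^2)^{\alpha/2}$ at cost $M^\alpha\sup|g_3'|/L_3$, which by itself is admissible) becomes unnecessary. A secondary point: the paper's cutoff $\chi(h_1,h_2)$ is a cylinder independent of $h_3$, precisely so that the repeated $h_3$-integrations by parts in the far field produce no boundary terms; with your sharp spherical cutoff at $|h|=M^{-1+\delta}$ you would have to track boundary contributions at each step, which is avoidable.
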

\begin{proof}
Again we can replace all trignometric factors with exponential ones. We have
\[
|\nabla|^\alpha\omega_1 = c_\alpha\int \frac{\omega_1(x + h) - \omega_1(x)}{|h|^{3+\alpha}}dh,\quad
c_\alpha = \frac{2^\alpha\Gamma(\frac{\alpha+3}{2})}{\pi^{3/2}\Gamma(-\alpha/2)}.
\]
Let $\chi(h) = \chi(h_1, h_2)$ is a smooth bump function equal to 1 near the origin.
As in Lemma \ref{BS-C01} we integrate by parts in $h_3$ to get
\begin{align*}
&\left| \int_{\mathbb R} \frac{e^{iMg_3(x_2 + h_2)(x_3 + h_3)}\psi(L_3g_3(x_2 + h_2)(x_3 + h_3))}{|h|^{3+\alpha}}dh_3 \right|\\
&\lesssim_k M^{-k}\sum_{j=0}^k L_3^j(h_1^2 + h_2^2)^{-(2+\alpha+k-j)/2}
\end{align*}
which is the same as the corresponding bound in Lemma \ref{BS-C01} with $|J| = 1 + \alpha$,
so by the same reasoning,
\[
\int_{\mathbb R^2} (1 - \chi(M^{1-\delta}h))\int_{\mathbb R} \frac{\omega_1(x + h)}{|h|^{3+\alpha}}dh_3dh_1dh_2
\]
has the desired bound. Since $|h|^{-3-\alpha}$ is integrable at infinity,
\begin{align*}
&\int_{\mathbb R^2} (1 - \chi(M^{1-\delta}h))\int_{\mathbb R} \frac{a(x_2)\psi(L_1g_1(x_2)x_1)\psi(L_2g_2(x_2))\psi(L_3g_3(x_2)x_3)}
{|h|^{3+\alpha}}\\
&\times e^{iM(g_2(x_2)+g_2'(x_2)h_2+g_3(x_2)(x_3+h_3))}dh_3dh_1dh_2
\end{align*}
also has the same bound, so in $(1 - \chi(M^{1-\delta}h))(\omega_1(x + h) - \omega_1(x))$,
we can replace $\omega_1(x + h)$ with the planar wave
\[
a(x_2)\psi(L_1g_1(x_2)x_1)\psi(L_2g_2(x_2))\psi(L_3g_3(x_2)x_3)e^{iM(g_2(x_2)+g_2'(x_2)h_2+g_3(x_2)(x_3+h_3))}.
\]
In $\chi(M^{1-\delta}h)(\omega_1(x + h) - \omega_1(x))$,
we can also do so as in Lemma \ref{BS-C01},
with $|J|$ replaced by $1 + \alpha$, with acceptable error.
Now we end up with the action of $|\nabla|^\alpha$ on a factor independent of $h$ times
the planar wave $e^{iM(g_2'(x_2)h_2+g_3(x_2)h_3)}$,
whose result is multiplication by
$(g_2'(x_2)^2 + g_3(x_2)^2)^{\alpha/2}$; hence the result follows.
\end{proof}
\begin{cor}\label{nabla-alpha-0}
For $t \in [T_n, 1]$, $\alpha \in (0, 1)$ and $x \in \supp \omega_n(\cdot, t)$,
\begin{align*}
||\nabla|^\alpha\omega_1 - M^\alpha(g_2'(0)^2 + g_3(0)^2)^{\alpha/2}\omega_1|
&\lesssim_{\alpha,\delta} BM^{(\alpha-1)(1-\delta)}\\
&+ M^\alpha|x_2|\sup(|g_2''| + |g_3'|)
\end{align*}
where $B$ is the same as before.
\end{cor}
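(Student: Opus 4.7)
The strategy is to reduce the corollary to Lemma \ref{nabla-alpha} plus a simple Taylor/mean-value bound on the multiplicative factor. By the triangle inequality,
\[
\bigl| |\nabla|^\alpha\omega_1 - M^\alpha(g_2'(0)^2 + g_3(0)^2)^{\alpha/2}\omega_1 \bigr|
\le E_1 + E_2,
\]
where $E_1 = \bigl| |\nabla|^\alpha\omega_1 - M^\alpha(g_2'(x_2)^2 + g_3(x_2)^2)^{\alpha/2}\omega_1 \bigr|$ is bounded by Lemma \ref{nabla-alpha} by $\lesssim_{\alpha,\delta} BM^{(\alpha-1)(1-\delta)}$, and $E_2 = M^\alpha\bigl|(g_2'(x_2)^2 + g_3(x_2)^2)^{\alpha/2} - (g_2'(0)^2 + g_3(0)^2)^{\alpha/2}\bigr|\,|\omega_1|$ is the quantity to control.

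To bound $E_2$, set $h(s) = (g_2'(s)^2 + g_3(s)^2)^{\alpha/2}$. Since the standing hypotheses of Section \ref{om} give $g_3 \in [1/2, 2]$ and $|g_2'| \le 1$, the quantity $g_2'(s)^2 + g_3(s)^2$ is bounded above and below by positive constants, so $h$ is smooth in $s$ with
\[
h'(s) = \tfrac{\alpha}{2}(g_2'(s)^2 + g_3(s)^2)^{\alpha/2-1} \cdot 2\bigl(g_2'(s) g_2''(s) + g_3(s) g_3'(s)\bigr),
\]
and hence $|h'(s)| \lesssim_\alpha |g_2''(s)| + |g_3'(s)|$. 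By the mean value theorem,
\[
|h(x_2) - h(0)| \le |x_2|\,\sup|h'| \lesssim_\alpha |x_2|\,\sup(|g_2''| + |g_3'|).
\]
Combined with $|\omega_1| \lesssim 1$ (which follows from $|a| \le 2$ and the bump factors being bounded), this yields $E_2 \lesssim_\alpha M^\alpha|x_2|\sup(|g_2''| + |g_3'|)$, and adding the two contributions gives the claim.

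There is no real obstacle here; the only point that deserves attention is the lower bound on $g_2'(s)^2 + g_3(s)^2$ so that the factor $(g_2'(s)^2 + g_3(s)^2)^{\alpha/2-1}$ does not blow up, and this is secured by the uniform lower bound $g_3 \ge 1/2$. Note also that the restriction $x \in \supp\omega_n(\cdot, t)$ plays no role in the estimate itself beyond legitimising the appearance of $|x_2|$ (which will ultimately be small thanks to the compact support of $\omega_n$); the bound on $E_1$ from Lemma \ref{nabla-alpha} is valid pointwise in $x$.
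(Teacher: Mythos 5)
Your proof is correct and is essentially the paper's own argument: the paper's one-line proof likewise invokes Lemma \ref{nabla-alpha} and bounds $(g_2'^2+g_3^2)^{\alpha/2}\big|_0^{x_2} \lesssim |x_2|\sup(|g_2''|+|g_3'|)$ using $|g_2'|\le 1$ and $g_3\in[1/2,2]$. Your added remarks on the lower bound for $g_2'^2+g_3^2$ and on $|\omega_1|\lesssim 1$ are correct elaborations of what the paper leaves implicit.
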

\begin{proof}
Since $|g_2'| \le 1$ and $|g_3| \le 2$,
$(g_2'^2 + g_3^2)^{\alpha/2}|_0^{x_{n+2}} \lesssim |x_2|\sup(|g_2''| + |g_3'|)$.
\end{proof}
\begin{cor}\label{nabla-alpha-Cs}
For $t \in [T_n, 1]$ and $0 < r < s < 1 - \alpha$,
\begin{align*}
\||\nabla|^\alpha\omega_1 - M^\alpha(g_2'(0)^2 + g_3(0)^2)^{\alpha/2}\omega_1\|_{C^r}
&\lesssim_{r,s,\alpha,\delta} BM^{\alpha+(s-1)(1-\delta)}\\
&+ M^{\alpha+s}|x_2|\sup(|g_2''| + |g_3'|).
\end{align*}
where $B$ is the same as before.
\end{cor}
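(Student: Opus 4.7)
The plan is to decompose
\[
V := |\nabla|^\alpha\omega_1 - M^\alpha(g_2'(0)^2 + g_3(0)^2)^{\alpha/2}\omega_1 = W + R,
\]
where
\[
W := |\nabla|^\alpha\omega_1 - M^\alpha(g_2'(x_2)^2 + g_3(x_2)^2)^{\alpha/2}\omega_1
\]
is the error controlled by Lemma \ref{nabla-alpha}, and
\[
R := M^\alpha\bigl[(g_2'(x_2)^2 + g_3(x_2)^2)^{\alpha/2} - (g_2'(0)^2 + g_3(0)^2)^{\alpha/2}\bigr]\omega_1
\]
is the correction coming from evaluating the leading coefficient at $x_2 = 0$. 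I will bound the $C^r$ norms of $W$ and $R$ separately; the two terms on the right-hand side of the claim will correspond to these two contributions.

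For $R$, let $f(x_2) := (g_2'(x_2)^2 + g_3(x_2)^2)^{\alpha/2}$. As in the proof of Corollary \ref{nabla-alpha-0}, $|f(x_2) - f(0)|\lesssim |x_2|\sup(|g_2''|+|g_3'|)$, and differentiation in $x_2$ yields $|f'(x_2)|\lesssim \sup(|g_2''|+|g_3'|)$. Combined with $\|\omega_1\|_{L^\infty}\lesssim 1$ and $\|\omega_1\|_{C^1}\lesssim M+B$ (as tracked in the derivative tables in the proofs of Lemmas \ref{BS-C01} and \ref{quadratic}), the Leibniz rule yields $\|R\|_{L^\infty(\supp\omega_n)}\lesssim M^\alpha|x_2|\sup(|g_2''|+|g_3'|)$ and $\|R\|_{C^1(\supp\omega_n)}\lesssim M^{\alpha+1}|x_2|\sup(|g_2''|+|g_3'|)$. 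Hölder interpolation via $[h]_{\dot C^r}\lesssim\|h\|_{L^\infty}^{1-r}\|h\|_{C^1}^{r}$ then gives
\[
\|R\|_{C^r}\lesssim M^{\alpha+r}|x_2|\sup(|g_2''|+|g_3'|) \le M^{\alpha+s}|x_2|\sup(|g_2''|+|g_3'|)
\]
since $r < s$, matching the second term of the claimed bound.

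For $W$, Lemma \ref{nabla-alpha} supplies $\|W\|_{L^\infty}\lesssim_{\alpha,\delta} BM^{(\alpha-1)(1-\delta)}$. Paired with a higher-regularity estimate $\|W\|_{C^{s_0}}\lesssim \||\nabla|^\alpha\omega_1\|_{C^{s_0}}+M^\alpha\|\omega_1\|_{C^{s_0}}\lesssim M^{\alpha+s_0}$ (using that $\omega_1$ is essentially frequency-localized at scale $M$, so that its $C^{s_0}$ norm is controlled by $M^{s_0}$, and likewise for $|\nabla|^\alpha\omega_1$), the interpolation $[W]_{\dot C^r}\lesssim\|W\|_{L^\infty}^{1-r/s_0}\|W\|_{C^{s_0}}^{r/s_0}$ produces a bound of the form $B^{1-r/s_0}M^{(\alpha-1)(1-\delta)(1-r/s_0)+(\alpha+s_0)r/s_0}$. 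Choosing $s_0$ slightly greater than $s$, and absorbing $r,s,\alpha,\delta$-dependent factors (together with $B^{-r/s_0}\le 1$) into the implied constant, this collapses to the first term $BM^{\alpha+(s-1)(1-\delta)}$ on the right-hand side.

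The main obstacle will be ensuring the interpolation on $W$ is tight enough to produce the exact exponent $\alpha+(s-1)(1-\delta)$ uniformly for all $r<s$. Because naive interpolation between $L^\infty$ and $C^{s_0}$ loses a power of $M$ that degenerates as $r\to s$, one may have to revisit the proof of Lemma \ref{nabla-alpha} and directly estimate Hölder norms of each error source --- the far-field tail, the Taylor remainder of the phase factor bounded by $B|h_2|$ inside the cutoff region, and the planar-wave evaluation of $|\nabla|^\alpha$. Such a refinement essentially encodes a Bernstein-type estimate reflecting the frequency-$M$ structure of $\omega_1$, and would yield a direct bound $\|W\|_{C^s}\lesssim BM^{\alpha+(s-1)(1-\delta)}$, from which the claimed $C^r$ estimate follows at once since $r<s$.
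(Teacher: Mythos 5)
There is a genuine gap in your treatment of the term $W = |\nabla|^\alpha\omega_1 - M^\alpha(g_2'(x_2)^2+g_3(x_2)^2)^{\alpha/2}\omega_1$. As you yourself observe, interpolating between $\|W\|_{L^\infty} \lesssim BM^{(\alpha-1)(1-\delta)}$ and a crude bound $\|W\|_{C^{s_0}} \lesssim M^{\alpha+s_0}$ degenerates as $r \to s_0$: at $r = s_0 = s$ the exponent is $\alpha+s$, whereas the target is $\alpha+(s-1)(1-\delta) \approx \alpha+s-1$, so you lose essentially a full power of $M$, and no choice of $s_0$ slightly above $s$ repairs this uniformly in $r < s$. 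Your proposed remedy --- re-proving Lemma \ref{nabla-alpha} directly in H\"older norms via a Bernstein-type refinement --- is only asserted, not carried out, so the first term of the claimed bound is not established. A telltale sign that an ingredient is missing is that your argument never uses the hypothesis $s < 1-\alpha$.

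The paper closes this gap by a different and cleaner mechanism: it uses the embedding $L^\infty\cap|\nabla|^{-s}L^\infty \subset C^r$ for $r<s$, so it suffices to bound $\|V\|_{L^\infty}$ (which is Corollary \ref{nabla-alpha-0}) and $\||\nabla|^sV\|_{L^\infty}$. For the latter it writes
\begin{align*}
|\nabla|^sV &= \bigl(|\nabla|^{\alpha+s}\omega_1 - M^{\alpha+s}(g_2'(0)^2+g_3(0)^2)^{(\alpha+s)/2}\omega_1\bigr)\\
&\quad - M^\alpha(g_2'(0)^2+g_3(0)^2)^{\alpha/2}\bigl(|\nabla|^s\omega_1 - M^s(g_2'(0)^2+g_3(0)^2)^{s/2}\omega_1\bigr)
\end{align*}
and applies the $L^\infty$ estimate of Corollary \ref{nabla-alpha-0} twice more, at orders $\alpha+s$ and $s$; this is exactly where $s<1-\alpha$ is needed, so that $\alpha+s<1$ keeps both applications within the scope of Lemma \ref{nabla-alpha}. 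This preserves the gain of $M^{-(1-\delta)}$ relative to the main term at the higher regularity level, which your interpolation cannot. Your handling of the coefficient-correction term $R$ is essentially sound and consistent with the second term of the claimed bound, but the argument for $W$ must be replaced by something like the paper's three-fold application of the $L^\infty$ lemma (or an actually executed H\"older-norm version of Lemma \ref{nabla-alpha}).
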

\begin{proof}
Thanks to the embedding $L^\infty\cap|\nabla|^{-s}L^\infty \subset C^r$,
the result follows from the special case $r = 0$ shown above, and
\begin{align*}
&||\nabla|^{\alpha+s}\omega_1 - M^\alpha(g_2'(0)^2 + g_3(0)^2)^{\alpha/2}|\nabla|^s\omega_1|\\
&\le ||\nabla|^{\alpha+s}\omega_1 - M^{\alpha+s}(g_2'(0)^2 + g_3(0)^2)^{(\alpha+s)/2}\omega_1|\\
&+ M^\alpha(g_2'(0)^2 + g_3(0)^2)^{\alpha/2}
||\nabla|^s\omega_1 - M^s(g_2'(0)^2 + g_3(0)^2)^{s/2}\omega_1|\\
&\lesssim_{s,\alpha,\delta} BM^{(\alpha+s-1)(1-\delta)}
+ BM^{\alpha+(s-1)(1-\delta)}
+ M^{\alpha+s}|x_2|\sup(|g_2''| + |g_3'|)
\end{align*}
where the first term on the right-hand side are dominated by the second one,
and the corresponding bound with $s = 0$.
\end{proof}

\section{The construction}\label{construction}
Let $\alpha \in (0, \frac{22-8\sqrt7}{9})$.
Since $8\sqrt7 = \sqrt{448} > \sqrt{441} = 21$, $\alpha < 1/9$.
Let $R = \sqrt{\frac{2}{7\alpha}}$ and $s = (3\alpha+2-2\sqrt{14\alpha})/4$.
Then $R > \sqrt2$ and $s > 0$.
Let $A = N^{\sqrt{2\alpha/7}}$ and $L = N^{1+\alpha-s-2\sqrt{2\alpha/7}}$.
Then $L < N^{5/6}$. For $t \in [0, 1]$ let
\[
\omega_0(x, t) = A\psi(Lx_1)\psi(Lx_2)\psi(Lx_3)\sin(Nx_2)\sin(Nx_3)e_1 + \cdots e_3
\]
where the coefficient of $e_3$ is chosen to make $\omega_0$ divergence free,
see Lemma \ref{quadratic}.
Suppose $\omega_k(x, t)$ is defined for $0 \le k \le n - 1$ and $t \in [0, 1]$.
Let $\Omega_n = \sum_{k=0}^{n-1} \omega_k$ and
\begin{equation}\label{U-def}
\begin{aligned}
U_n(x, t) &= x_{n+1}\partial_{n+1}(\Omega_n * \mathcal K)_{n+1}(x_{n+2}e_{n+2}, t)e_{n+1}\\
&+ (\Omega_n * \mathcal K)_{n+2}(x_{n+2}e_{n+2}, t)e_{n+2}\\
&+ x_{n+3}\partial_{n+3}(\Omega_n * \mathcal K)_{n+3}(x_{n+2}e_{n+2}, t)e_{n+3}
\end{aligned}
\end{equation}
where the subscripts on the right-hand side are taken modulo 3.
Let $\phi_n$ be the flow of $U_n$.
We then define $\omega_n$ on $[T_n, 1]$ by solving the transport equation
\[
\partial_t\omega_n + U_n\cdot\nabla\omega_n = \omega_n\cdot\nabla U_n - g_n(t)\omega_n
\]
where
\[
g_n(t) = 0.01N^{\alpha R^n}(g_{n2}'(0)^2 + g_{n3}(0)^2)^{\alpha/2}
\]
with the data
\[
\omega_n(x, 1) = A^{R^n}\prod_{i=1}^3\psi(L^{R^n}x_i)\sin(N^{R^n}x_{n+2})\sin(N^{R^n}x_{n+3})e_{n+1}
+ \cdots e_{n+3}
\]
where the subscripts on the right-hand side are taken modulo 3,
and the coefficient of $e_{n+3}$ is chosen to make $\omega_n$ divergence-free,
see Lemma \ref{quadratic}.

\begin{lem}
For all $t \in [T_n, 1]$, the $e_{n+2}$ component of $\omega_n$ vanishes.
\end{lem}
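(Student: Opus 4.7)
The plan is to show that $(\omega_n)_{n+2}$ solves a decoupled linear homogeneous transport equation whose terminal data at $t=1$ vanishes, and then conclude by uniqueness that it is identically zero.

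The initial (terminal) data is handled by inspecting the explicit formula given above the lemma. The displayed $e_{n+1}$ piece of $\omega_n(x, 1)$ contributes nothing in the $e_{n+2}$ direction, and the construction specifies that the divergence corrector is placed in $e_{n+3}$ (this is possible: the divergence of the $e_{n+1}$ piece is an $x_{n+1}$-derivative, which can be cancelled by an $e_{n+3}$ term obtained by integrating in $x_{n+3}$, exactly as in the construction of $\omega_3$ in Section \ref{om}). Hence $(\omega_n)_{n+2}(x, 1) \equiv 0$.

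The key structural observation is that the $e_{n+2}$ projection of the transport equation decouples. Projecting
\[
\partial_t \omega_n + U_n \cdot \nabla \omega_n = \omega_n \cdot \nabla U_n - g_n(t)\omega_n
\]
onto $e_{n+2}$ gives
\[
\partial_t (\omega_n)_{n+2} + U_n \cdot \nabla (\omega_n)_{n+2} = \sum_{j} (\omega_n)_j \partial_j (U_n)_{n+2} - g_n(t) (\omega_n)_{n+2}.
\]
By the definition \eqref{U-def}, $(U_n)_{n+2}(x, t) = (\Omega_n * \mathcal K)_{n+2}(x_{n+2} e_{n+2}, t)$ depends only on $x_{n+2}$ (and $t$), so $\partial_j (U_n)_{n+2} = 0$ for $j \not\equiv n+2 \pmod 3$. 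Hence the stretching term reduces to
\[
(\omega_n \cdot \nabla U_n)_{n+2} = (\omega_n)_{n+2} \cdot \partial_{n+2}(\Omega_n * \mathcal K)_{n+2}(x_{n+2} e_{n+2}, t),
\]
and the evolution equation for $(\omega_n)_{n+2}$ is a linear scalar transport equation with smooth coefficients, driven only by $(\omega_n)_{n+2}$ itself.

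To finish, one applies backward uniqueness for this ODE along characteristics of $U_n$: since $U_n$ is smooth (as a finite sum of Biot--Savart velocities generated by smooth, compactly supported $\omega_k$, restricted to a smooth profile in $x_{n+2}$ and linear in $x_{n+1}, x_{n+3}$), its flow map $\phi_n(x, 1, t)$ exists and is smooth on $[T_n, 1]$, and along each characteristic $(\omega_n)_{n+2}$ satisfies a linear ODE with zero data at $t=1$, hence vanishes identically. The only non-routine point is the structural observation about $(U_n)_{n+2}$; once it is made, the decoupling and uniqueness argument is immediate.
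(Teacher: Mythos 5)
Your proposal is correct and follows essentially the same route as the paper: both rest on the observation that $(U_n)_{n+2}$ depends only on $x_{n+2}$, so the $e_{n+2}$ component of the stretching term reduces to $(\omega_n)_{n+2}\,\partial_{n+2}(U_n)_{n+2}$, turning the $e_{n+2}$ projection into a decoupled linear homogeneous transport equation with vanishing terminal data, which is then zero by uniqueness. Your additional verification that the terminal data has no $e_{n+2}$ component is a harmless (and correct) elaboration of what the paper takes as given from the construction.
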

\begin{proof}
Let $\omega$ be that component and $u$ be the $e_{n+2}$ component of $U_n$.
Then $\omega$ vanishes at time 1 and satisfies
\[
\partial_t\omega + U_n\cdot\nabla\omega = \omega_n\cdot\nabla u
= \omega\partial_{n+2}u - g_n(t)\omega
\]
because $u$ depends only on $x_{n+2}$.
This is a linear transport equation in terms of $\omega$,
so the lemma follows from the uniqueness theorem.
\end{proof}

For $i \not\equiv n + 2 \pmod 3$, the $i$-th component of the transport equation is
\begin{align*}
\partial_t(\omega_n)_i + U_n\cdot\nabla(\omega_n)_i
&= \omega_n\cdot\nabla(U_n)_i - g_n(t)\omega_n\\
&= (\omega_n)_i\partial_i(U_n)_i - g_n(t)\omega_n
\end{align*}
because the $i$-th component of $U_n$, depends only on $x_i$ and $x_{n+2}$,
and only the partial derivative in $x_i$ matters thanks to the lemma above.
Then, using the notation in Lemma \ref{3DPDE} and Lemma \ref{3DPDE2},
\begin{equation}\label{om-n-xt}
\begin{aligned}
\omega_n(x, t) &= G_n(t)K_n(t)^{-1}g_{n1}(x_{n+2})^{-1}\omega_n(\phi_n(x, t, 1), 1)_{n+1}e_{n+1}\\
&+ G_n(t)g_{n3}(x_{n+2})^{-1}\omega_n(\phi_n(x, t, 1), 1)_{n+3}e_{n+3}\\
&= G_n(t)K_n(t)^{-1}A^{R^n}\psi(L^{R^n}K_n(t)g_{n1}(x_{n+2})x_{n+1})\psi(L^{R^n}g_{n2}(x_{n+2}))\\
&\times\sin(N^{R^n}g_{n2}(x_{n+2}))\psi(L^{R^n}g_{n3}(x_{n+2})x_{n+3})\\
&\times\sin(N^{R^n}g_{n3}(x_{n+2})x_{n+3})e_{n+1} + \cdots e_{n+3}
\end{aligned}
\end{equation}
where
\begin{align*}
G_n(t) &= \exp\int_t^1 g_n(s)ds, &
K_n(t) &= \int_t^1 -\partial_{n+2}(U_n)_{n+2}(0, 0, 0, s)ds
\end{align*}
and the coefficient of $e_{n+3}$ makes $\omega_n(x, t)$ divergence free.

\subsection{The induction}
Note that $\omega_0$ is independent of $t$, so we have $G_0(t) = K_0(t) = g_{01} = g_{03} = g_{02}' = 1$
for all $t \in [0, 1]$.
Let $T_n^* = 34R^nA^{-R^{n-1}}\ln(AN^s)$.
Assume, as the induction hypothesis, that for all $0 \le k \le n - 1$,
and $t \in [T_{k+1}^*, 1]$ we have
$1 \le G_k(t) \le K_k(t) \le 2$,
$g_{k1}, g_{k3} \in [1/2, 2]$, $g_{k2}' \in [1/2, 1]$,
$|g_{k1}'|$, $|g_{k3}'|$ and $|g_{k2}''| < (N^2/L^R)^{R^{k-1}}$ and that
$|g_{k1}''|$ and $|g_{k3}''| < N^{2R^{k-1}}$. Then for $t \in [T_{k+1}^*, 1]$
we can apply the velocity estimates in Section \ref{om} to $\omega_k$,
with $M = N^{R^k}$, $L_2 = L_3 = L^{R^k}$ and $L_1 \le 2L^{R^k}$.

Also recall the definition of $U_n$ from \eqref{U-def}.
\begin{lem}\label{U-C1-bound}
If $N$ is sufficiently large then for $t \in [T_n^*, 1]$,
\[
-\partial_{n+2}(U_n)_{n+2}(0, 0, 0, t) \in (1/34, 35/34)A^{R^{n-1}}.
\]
\end{lem}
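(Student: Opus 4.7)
The plan is to decompose $-\partial_{n+2}(U_n)_{n+2}(0,0,0,t)$ by linearity and show it is dominated by the contribution of the innermost outer layer $\omega_{n-1}$, with every other term a small perturbation. From the definition \eqref{U-def}, the $e_{n+2}$ component of $U_n$ equals $(\Omega_n*\mathcal K)_{n+2}(x_{n+2}e_{n+2},t)$, which depends only on $x_{n+2}$, so
\[
\partial_{n+2}(U_n)_{n+2}(0,0,0,t)=\sum_{k=0}^{n-1}\partial_{n+2}(\omega_k*\mathcal K)_{n+2}(0,0,0,t).
\]

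The main step is to isolate the term $k=n-1$. After cyclically relabeling coordinates so that $(e_n,e_{n+1},e_{n+2})\mapsto(e_1,e_2,e_3)$, the explicit formula \eqref{om-n-xt} identifies $\omega_{n-1}$ with the template of Section \ref{om}, with $M=N^{R^{n-1}}$, $L_2=L_3=L^{R^{n-1}}$, $L_1=L^{R^{n-1}}K_{n-1}(t)$, and overall amplitude $G_{n-1}(t)K_{n-1}(t)^{-1}A^{R^{n-1}}$. Since the terminal data has $g_{(n-1)2}(x_{n+2})=x_{n+2}$ and the polar structure of $U_{n-1}$ preserves this along the flow, $g_{(n-1)2}(0,t)=0$ for all $t\in[T_n^*,1]$; Lemma \ref{BS-C1}, scaled by the amplitude factor, then yields
\[
-\partial_{n+2}(\omega_{n-1}*\mathcal K)_{n+2}(0,0,0,t)=G_{n-1}(t)K_{n-1}(t)^{-1}A^{R^{n-1}}\rho_{n-1}(t)+E_{n-1},
\]
where
\[
\rho_{n-1}(t):=\frac{g_{(n-1)2}'(0,t)\,g_{(n-1)3}(0,t)}{g_{(n-1)1}(0,t)\bigl(g_{(n-1)3}(0,t)^2+g_{(n-1)2}'(0,t)^2\bigr)},
\]
and $|E_{n-1}|\lesssim_\delta A^{R^{n-1}}B_0\,N^{R^{n-1}(\delta-1)}$ with $B_0\lesssim L^{R^{n-1}}R^{n-1}\ln N$.

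Under the induction hypothesis $g_{(n-1)1},g_{(n-1)3}\in[1/2,2]$, $g_{(n-1)2}'\in[1/2,1]$, and $1\le G_{n-1}(t)\le K_{n-1}(t)\le 2$, elementary optimization of $\rho(a,b,c)=bc/(a(b^2+c^2))$ (whose maximum along $b=c$ equals $1/(2a)$, and whose minimum over the hypothesis box equals $4/(17a)$, attained at $(b,c)=(1/2,2)$) gives $\rho_{n-1}(t)\in[2/17,1]$. Combined with $G_{n-1}/K_{n-1}\in[1/2,1]$, the main term lies in $[1/17,1]\cdot A^{R^{n-1}}$, which sits comfortably inside the target interval $(1/34,35/34)\cdot A^{R^{n-1}}$. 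For the remaining indices $k\le n-2$, Corollary \ref{BS-C01-cor} bounds $\|\nabla(\omega_k*\mathcal K)\|_{L^\infty}\lesssim A^{R^k}$, so their summed contribution is $O(A^{R^{n-2}})=o(A^{R^{n-1}})$ because $R>\sqrt 2$ and $A=N^{\sqrt{2\alpha/7}}\to\infty$ with $N$.

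The principal obstacle is showing $E_{n-1}=o(A^{R^{n-1}})$, which reduces to verifying $(L/N^{1-\delta})^{R^{n-1}}R^{n-1}\ln N\to 0$ uniformly in $n$. The paper's choices $L=N^{1+\alpha-s-2\sqrt{2\alpha/7}}$ and $s=(3\alpha+2-2\sqrt{14\alpha})/4$ ensure $s+2\sqrt{2\alpha/7}-\alpha>0$ for $\alpha\in(0,(22-8\sqrt7)/9)$, so there exists $\delta>0$ with $L<N^{1-\delta}$ and the geometric factor decays. Assembling the estimates, for $N$ sufficiently large both the $k\le n-2$ contributions and $E_{n-1}$ are each bounded by $A^{R^{n-1}}/34$, and we conclude $-\partial_{n+2}(U_n)_{n+2}(0,0,0,t)\in(1/34,35/34)A^{R^{n-1}}$.
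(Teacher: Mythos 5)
Your proposal follows essentially the same route as the paper: decompose $\partial_{n+2}(U_n)_{n+2}(0,0,0,t)$ by linearity, extract the main term from $\omega_{n-1}$ via Lemma \ref{BS-C1} (landing in $[1/17,1]A^{R^{n-1}}$ after the same optimization of $g_{2}'g_3/(g_1(g_3^2+g_2'^2))$ over the induction-hypothesis box), and treat the $k\le n-2$ layers and the Biot--Savart approximation error as $o(A^{R^{n-1}})$ perturbations using $L<N^{1-\delta}$. The only flaw is the final error budget: since the main term can be as small as $\tfrac1{17}A^{R^{n-1}}=\tfrac2{34}A^{R^{n-1}}$, allowing \emph{each} of the two error terms to be as large as $A^{R^{n-1}}/34$ only yields a lower bound of $0$, not $1/34$; you need the \emph{total} error below $A^{R^{n-1}}/34$ (the paper takes each below $A^{R^{n-1}}/68$), which is equally available from your estimates by taking $N$ larger, so this is a trivially repairable arithmetic slip rather than a structural gap.
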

\begin{proof}
We have
\begin{align*}
B_0 &= \sum_{i=1}^3 L_i + L_3\ln M + \sup|g_{k1}'| + ML_3^{-1}(\sup|g_{k3}'| + \sup|g_{k2}''|)\\
&< L^{R^k}(4 + \ln M) + (1 + 2(N/L)^{R^k})(N^2/L^R)^{R^{k-1}}\\
&\lesssim L^{R^k}\ln M + (N^{1+2/R}/L^2)^{R^k}. \tag{$\ln M \ge \ln N \gg 1$, $N/L \gg 1$}
\end{align*}
We now check that $L > N^{(1+2/R)/3}$. Indeed, $L = N^{1+\alpha-s-2\sqrt{2\alpha/7}} = N^{(2+\alpha)/4+\frac32\sqrt{2\alpha/7}}
= N^{1/3+(2+3\alpha)/12+\frac32\sqrt{2\alpha/7}} > N^{1/3+\sqrt{14\alpha}/6+\frac32\sqrt{2\alpha/7}}
= \sqrt[3]N^{1+8\sqrt{2\alpha/7}} > \sqrt[3]N^{1+7\sqrt{2\alpha/7}} = N^{(1+2/R)/3}$.
Then $N^{1+2/R}/L^2 < L$, so the first term dominates.

Since $L_2 = L_3 = L^{R^k} \le M^{5/6}$, and $L_1 \le 2L^{R^k} \le 2M^{5/6}$,
we can apply Lemma \ref{BS-C01} and Corollary \ref{BS-C01-cor} to get,
for $t \in [T_{k+1}^*, 1]$ and $\delta \in (0, 1)$,
\[
\|\omega_k(t) * \mathcal K\|_{C^1} \lesssim_\delta A^{R^k}(1 + B_0M^{\delta-1}).
\]
Let $r = g_{k3}/g_{k2}'$. Then $r \in [1/2, 4]$, so
\[
\frac{G_k(t)g_{k2}'g_{k3}}{K_k(t)g_{k1}(g_{k3}^2 + g_{k2}'^2)}
\in \frac{[1/4, 2]}{r + 1/r} \in [1/17, 1]. \tag{$r = g_{k3}/g_{k2}' \in [1/2, 4]$}
\]
By Lemma \ref{BS-C1}, for $i \equiv k \pmod 3$,
\[
-\partial_i(\omega_k * \mathcal K)_i(0, 0, 0, t)
\in [1/17, 1]A^{R^k} + O_\delta(A^{R^k}B_0M^{\delta-1}).
\]
Since $B_0 \lesssim L^{R^k}\ln M \lesssim_\delta M^{5/6+\delta}$,
\[
-\partial_i(\omega_k * \mathcal K)_i(0, 0, 0, t)
\in [1/17, 1]A^{R^k} \pm O_\delta(A^{R^k}M^{2\delta-1/6}).
\]
Let $\delta \le 0.08$. For $k = n - 1$, $M^{2\delta-1/6} \le N^{2\delta-1/6}$ has a negative exponent,
so if $N$ is sufficiently large then for $t \in [T_n^*, 1]$,
\begin{equation}\label{U-C1-bound1}
-\partial_{n+2}(\omega_{n-1} * \mathcal K)_{n+2}(0, 0, 0, t) \in (3/68, 69/68)A^{R^{n-1}}.
\end{equation}
For $0 \le k \le n - 2$ and $t \in [T_{k+1}^*, 1]$,
\[
\|\omega_k(t) * \mathcal K\|_{C^1}
\lesssim A^{R^k}(1 + M^{2\delta-1/6}) \lesssim A^{R^k}
\]
so for $t \in [T_n^*, 1]$, if $A = N^{\sqrt{2\alpha/7}}$ is sufficiently large then
\begin{equation}\label{U-C1-bound2}
\sum_{k=0}^{n-2} \|\omega_k(t) * \mathcal K\|_{C^1}
\lesssim \sum_{k=0}^{n-2} A^{R^k}
\begin{cases}
\lesssim A^{R^{n-2}}, & n \ge 2\\
= 0, & n = 1
\end{cases}
< A^{R^{n-1}}/68.
\end{equation}
By \eqref{U-def}, $\partial_{n+2}(U_n)_{n+2}(0, 0, 0, t) = \partial_{n+2}(\Omega * \mathcal K)_{n+2}(0, 0, 0, t)$,
so the result follows from \eqref{U-C1-bound1} and \eqref{U-C1-bound2}.
\end{proof}

Next we want to find a subinterval $[T_n, 1] \subset [T_n^*, 1]$
in which the assumptions needed to apply the flow map estimates in Section \ref{flowmap} hold.
For that purpose we need to estimate higher derivatives.
\begin{lem}\label{U-C3-bound}
There is a constant $C > 1$ such that if $N$ is sufficiently large then for $t \in [T_n^*, 1]$ and $i = 1, 2, 3$,
$\|\partial_i(U_n)_i(t)\|_{C^2} \le C(AN^2)^{R^{n-1}}$.
\end{lem}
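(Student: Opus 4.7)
The plan is to reduce the $C^2$ bound on $\partial_i(U_n)_i$ to mixed third-order derivatives of the Biot--Savart convolutions $\omega_k * \mathcal K$, and then to bound each summand by extending the velocity estimates of Section \ref{om} by one order. First, by the definition \eqref{U-def}, for each $i \in \{n+1, n+2, n+3\}$ the scalar function $\partial_i(U_n)_i(x,t)$ depends only on $x_{n+2}$ and equals a single spatial derivative of $(\Omega_n * \mathcal K)_i$ evaluated on the axis $\{x_{n+1} = x_{n+3} = 0\}$. Its $C^2$ norm in $x_{n+2}$ is therefore controlled by
\[
\sum_{j=0}^{2} \|\partial_{n+2}^j \partial_i(\Omega_n * \mathcal K)_i\|_{L^\infty},
\]
and, since $\Omega_n * \mathcal K = \sum_{k=0}^{n-1} \omega_k * \mathcal K$, it suffices to control mixed derivatives of order at most three of each $\omega_k * \mathcal K$ in $L^\infty$.

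Next, for each $k$, the layer $\omega_k$ has the explicit form \eqref{om-n-xt}, which fits the framework of Section \ref{om} with $M = N^{R^k}$, $L_i \le 2L^{R^k}$, and amplitude $\lesssim A^{R^k}$; the induction hypothesis ensures the coefficient functions $g_{ki}$ satisfy the relevant assumptions. Corollary \ref{BS-C12-cor} already yields $\|\omega_k * \mathcal K\|_{C^2} \lesssim_\delta A^{R^k}(M + B_1 M^{\delta-1}) \lesssim A^{R^k}M$, since the $B_1$ term is subdominant to $M^2$ thanks to $L < N^{5/6}$ (as verified in the proof of Lemma \ref{U-C1-bound}). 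I would then extend this to a $C^3$-type bound by repeating the argument of Lemma \ref{BS-C12} with one additional derivative applied to the explicit planar-wave approximation $\tilde u_{,J}$. Each further derivative in an oscillating direction costs at most a factor $M$; each derivative hitting a cut-off $\psi(L_i \cdot)$ costs at most $L^{R^k} \le M^{5/6}$; and each derivative falling on a coefficient $g_{ki}$ is controlled by the induction-hypothesis bounds $|g_{ki}'|, |g_{ki}''| \le N^{2R^{k-1}}$. Bookkeeping as in the tables in the proofs of Lemmas \ref{BS-C01} and \ref{BS-C12}, every cross term is dominated by $M^2$, yielding
\[
\|\omega_k * \mathcal K\|_{C^3} \lesssim_\delta A^{R^k}M^2 = (AN^2)^{R^k}.
\]

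Finally, I would sum over $k = 0, 1, \ldots, n-1$: since $R > \sqrt 2 > 1$ and $AN^2 \gg 1$ for $N$ large, the sum $\sum_{k=0}^{n-1}(AN^2)^{R^k}$ is dominated, up to a constant factor, by its last term $(AN^2)^{R^{n-1}}$, giving the claimed bound. The main obstacle is the $C^3$ extension of Corollary \ref{BS-C12-cor}: the planar-wave reduction and the derivative tables of Lemmas \ref{BS-C01} and \ref{BS-C12} must be carried out one order higher, which introduces several new combinations involving $\partial^2 g_{ki}$ and the Biot--Savart remainders. These additional terms are all controlled by the induction-hypothesis bounds on $|g_{ki}''|$ together with $L < N^{5/6}$, so no genuinely new technique beyond Section \ref{om} is required, only more tedious bookkeeping.
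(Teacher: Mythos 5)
Your reduction (bound $\partial_i(\Omega_n*\mathcal K)_i$ layer by layer, then sum the geometric series $\sum_k (AN^2)^{R^k}$) matches the paper's, but the middle step --- extending Corollary \ref{BS-C12-cor} to a genuine $C^3$ bound on $\omega_k*\mathcal K$ --- has a gap that the paper is specifically engineered to avoid. A third derivative of the velocity in the distinguished direction $x_{k+2}$ necessarily produces third derivatives of the flow-map coefficients: differentiating $\sin(N^{R^k}g_{k2}(x_{k+2}))$ three times yields a term with $N^{R^k}g_{k2}'''$, and likewise $\psi(L_1g_{k1}x_{k+1})$, $\psi(L_3g_{k3}x_{k+3})$ and the explicit factor $g_{k2}'$ in $\tilde u$ generate $g_{k1}'''$, $g_{k3}'''$ and $g_{k2}'''$. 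The induction hypothesis only controls $g_{ki}'$ and $g_{ki}''$; there is no bound on third derivatives, and obtaining one would force you to strengthen Lemmas \ref{1DODE}--\ref{3DPDE} to one order higher (bounds on $\partial_x^3\phi$, hence assumptions on $\partial^4U$), cascading through the whole induction. So your assertion that the new terms ``are all controlled by the induction-hypothesis bounds on $|g_{ki}''|$'' is where the argument fails; this is not mere bookkeeping.

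The paper sidesteps this entirely. For the two coordinate directions in which $\omega_k$ has explicit product structure (i.e.\ $i\not\equiv k+2$), the derivative $\partial_i\omega_k$ is again a vorticity of the standard form of Section \ref{om}, with the amplitude $a$ multiplied by $L_1g_{k1}$, $L_3g_{k3}$ or $Mg_{k3}$ and no new derivatives of the $g$'s created. One then applies the existing $C^2$ estimate (Corollary \ref{BS-C12-cor}) to the \emph{vorticity} $\partial_i\omega_k$, obtaining $\|\partial_i(\omega_k*\mathcal K)_i\|_{C^2}\lesssim A^{R^k}(M^2+B_1M^\delta)\lesssim (AN^2)^{R^k}$; the remaining direction $i\equiv k+2$ is recovered for free from $\nabla\cdot(\omega_k*\mathcal K)=0$. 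This guarantees that at most two derivatives ever land on the $g$'s, which is exactly what the induction hypothesis supplies. If you want to salvage your route, you should replace the ``$C^3$ extension'' by this commute-one-derivative-onto-the-vorticity argument plus divergence-freeness.
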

\begin{proof}
For $0 \le k \le n - 1$, there are two values of $i \in \{1, 2, 3\}$ such that
$\partial_i\omega_k$ is of the same form as $\omega_k$,
with an extra factor being $L_1g_{k1}(x_{k+2})$, $L_3g_{k3}(x_{k+2})$ or $Mg_{k3}(x_{k+2})$,
so for these two values of $i$ and $t \in [T_{k+1}^*, 1]$, by Corollary \ref{BS-C12-cor},
\[
\|\partial_i(\omega_k(t) * \mathcal K)_i\|_{C^2}
\lesssim_\delta A^{R^k}(M^2 + B_1M^\delta).
\]
Since $\omega_k * \mathcal K$ is divergence free, the above then holds for all $i = 1, 2, 3$.
Since $B_0 \lesssim L^{R^k}\ln M \lesssim_\delta M^{5/6+\delta} \le M$ for $M$ sufficiently large,
\begin{align*}
B_1 &= B_0(M + B_0) + \sup|g_{k1}''| + ML_3^{-1}\sup|g_{k3}''|\\
&\lesssim MB_0 + (1 + (N/L)^{R^k})N^{2R^{k-1}}
\lesssim ML^{R^k}\ln M + L^{2R^k} \lesssim ML^{R^k}\ln M
\end{align*}
where we have used the bound $N^{1+2/R}/L < L^2$ proven before.
Since $B_1 \lesssim_\delta M^{11/6+\delta}$ and $M \gg 1$,
for $t \in [T_{k+1}^*, 1]$ and $i = 1, 2, 3$,
\[
\|\partial_i(\omega_k(t) * \mathcal K)_i\|_{C^2}
\lesssim A^{R^k}M^2 = (AN^2)^{R^k}.
\]
Summing from 0 to $n - 1$ and noting that $\partial_i(U_n)_i(x, t) = \partial_i(\Omega_n * \mathcal K)_i(x_{n+2}e_{n+2}, t)$ gives the result.
\end{proof}

Let $X = N^{-R^{n-1}}P^{-R^n}$, where $P = 6C\ln A$, with $C$ given in Lemma \ref{U-C3-bound}.
We check that this choice of $X$ allows us to apply the lemmas in Section \ref{flowmap}.
\begin{lem}\label{X2UC3-le-U-C1}
If $A > e$ and $t \in [T_n^*, 1]$, then
$X^2\|\partial_{n+2}(U_n)_{n+2}(t)\|_{C^2} < -\partial_{n+2}(U_n)_{n+2}(0, 0, 0, t)$.
\end{lem}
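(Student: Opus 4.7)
The plan is simply to combine Lemma \ref{U-C3-bound} (upper bound on the $C^2$ norm) and Lemma \ref{U-C1-bound} (lower bound on $-\partial_{n+2}(U_n)_{n+2}(0,0,0,t)$), and check that the chosen $X$ is small enough to absorb the large factor $(N^2)^{R^{n-1}}$ by means of the $P^{-2R^n}$ factor in its definition.

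Concretely, I would proceed as follows. First, by Lemma \ref{U-C3-bound}, for $t \in [T_n^*, 1]$,
\[
X^2\|\partial_{n+2}(U_n)_{n+2}(t)\|_{C^2}
\le X^2 \cdot C(AN^2)^{R^{n-1}}
= N^{-2R^{n-1}}P^{-2R^n}\cdot C A^{R^{n-1}}N^{2R^{n-1}}
= CA^{R^{n-1}}P^{-2R^n},
\]
where I have substituted $X = N^{-R^{n-1}}P^{-R^n}$ and cancelled the powers of $N$. Meanwhile, by Lemma \ref{U-C1-bound}, for $t \in [T_n^*, 1]$,
\[
-\partial_{n+2}(U_n)_{n+2}(0,0,0,t) > \tfrac{1}{34}A^{R^{n-1}}.
\]
So it suffices to show $C A^{R^{n-1}}P^{-2R^n} < A^{R^{n-1}}/34$, i.e.\ $P^{2R^n} > 34C$.

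Finally, this is where the definition $P = 6C\ln A$ together with $A > e$ enters: $\ln A > 1$ gives $P > 6C$, and since $R > \sqrt 2 > 1$ we have $R^n \ge 1$, so
\[
P^{2R^n} \ge P^2 > 36C^2 > 34C,
\]
the last inequality holding because $C > 1$ (as stated in Lemma \ref{U-C3-bound}). This completes the proof. There is really no obstacle here: the point of the lemma is just to record that the specific choice $P = 6C\ln A$ was engineered exactly so that the ratio of the $C^2$ bound to the $C^1$ lower bound is swallowed by $X^2$, and the verification is a one-line computation once the two earlier lemmas are in place.
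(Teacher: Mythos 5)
Your proof is correct and follows essentially the same route as the paper: substitute $X$ into the bound from Lemma \ref{U-C3-bound} to get $CA^{R^{n-1}}P^{-2R^n}$, compare with the lower bound from Lemma \ref{U-C1-bound}, and use $P=6C\ln A$ with $\ln A>1$, $C>1$, $R^n\ge 1$ to absorb the constant. Your intermediate step $P^{2R^n}\ge P^2>36C^2>34C$ is in fact a slightly cleaner rendering of the paper's own inequality chain.
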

\begin{proof}
Using $X^2 \le N^{-2R^{n-1}}P^{-2R^n}$ and Lemma \ref{U-C3-bound} we get
\begin{align}
\label{X2UC3-bound}
X^2\|\partial_{n+2}(U_n)_{n+2}(t)\|_{C^2} \le CA^{R^{n-1}}P^{-2R^n}.
\end{align}
Since $R$, $C$ and $\ln A > 1$, $P^{2R^n} > P > 36C$,
so the right-hand side is less than $A^{R^{n-1}}/36$.
Comparing this bound with the one given by Lemma \ref{U-C1-bound} shows the claim.
\end{proof}

By Lemma \ref{U-C1-bound}, there is $T_n$ such that $1 - T_n \in (34/35, 34)R^nA^{-R^{n-1}}\ln(AN^s)$
(in particular $T_n \in [T_n^*, 1]$) and
\[
\ln K_n(T_n) = \int_{T_n}^1 -\partial_{n+2}(U_n)_{n+2}(0, 0, 0, s)ds = R^n\ln(AN^s).
\]
Recall that $\phi_n$ is the flow map of the velocity field $U_n$ defined in \eqref{U-def}.
\begin{lem}\label{phin-bound}
If $N$ is sufficiently large (so is $A = N^{\sqrt{2\alpha/7}}$) then
for $x \in \supp \omega_n(\cdot, 1)$ and $t \in [T_n, 1]$,
$|\phi_n(x, 1, t)_{n+2}| \le X$.
\end{lem}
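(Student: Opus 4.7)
The plan is to reduce the flow of $U_n$ to a one-dimensional ODE in $x_{n+2}$ and apply Lemma \ref{1DODE2}(i). By \eqref{U-def}, the $e_{n+2}$ component of $U_n$ equals $(\Omega_n * \mathcal K)_{n+2}(x_{n+2}e_{n+2}, t)$ and depends only on $x_{n+2}$ and $t$, so the $(n+2)$th component of $\phi_n$ decouples and satisfies an autonomous 1D ODE. Setting $u(y, t) := (U_n)_{n+2}(0, ye_{n+2}, 0, t)$, this $u$ is odd in $y$ because $\Omega_n$ is axial and hence $\Omega_n * \mathcal K$ is polar. The contraction integral from Lemma \ref{1DODE2} is then precisely $K_n(t)$.

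The hypothesis $X^2 \sup|\partial_y^3 u(\cdot, s)| \le -\partial_y u(0, s)$ for $s \in [t, 1]$ is furnished by Lemma \ref{X2UC3-le-U-C1}, so Lemma \ref{1DODE2}(i) reduces the claim to verifying, for $t \in [T_n, 1]$ and $|x_{n+2}| \le L^{-R^n}$ (the $x_{n+2}$-projection of $\supp \omega_n(\cdot, 1)$ coming from the factor $\psi(L^{R^n}x_{n+2})$ in the initial data), the inequality
\[
L^{-R^n} \le K_n(t)^{-1} X \exp\Bigl(-\tfrac{1}{6}\int_t^1 X^2 \sup|\partial_y^3 u(\cdot, s)|\,ds\Bigr).
\]

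The worst case is $t = T_n$, where $K_n(T_n) = (AN^s)^{R^n}$ by construction of $T_n$. The exponential factor is essentially $1$: by \eqref{X2UC3-bound} the integrand is bounded by $CA^{R^{n-1}} P^{-2R^n}$, and since $1 - T_n \lesssim R^n A^{-R^{n-1}}\ln(AN^s)$, the integral is $\lesssim R^n\ln(AN^s) P^{-2R^n}$, which tends to $0$ for $N$ large because $P = 6C\ln A$ grows with $N$ and $P^{2R^n}$ eats any polynomial factor in $R^n$. So it suffices to verify
\[
L^{-R^n} \le \tfrac12 (AN^s)^{-R^n} N^{-R^{n-1}} P^{-R^n},
\]
or equivalently, after taking logarithms and dividing by $\ln N$,
\[
1 + \alpha - s - 2\sqrt{2\alpha/7} \;\ge\; \sqrt{2\alpha/7} + s + \sqrt{7\alpha/2} + \tfrac{\ln P + O(1)}{\ln N}.
\]

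Setting $\beta := \sqrt{2\alpha/7}$ and using $\sqrt{7\alpha/2} = 7\beta/2$, $\sqrt{14\alpha} = 7\beta$, and $s = (3\alpha + 2 - 14\beta)/4$, the difference between the two sides (ignoring the $o(1)$ correction) simplifies to $(\beta - \alpha)/2$. Since $\alpha < \alpha_0 < 1/9 < 2/7$, we have $\beta > \alpha$ strictly, so this gap is a fixed positive constant times $\ln N$, which absorbs the $\ln P/\ln N = O(\ln\ln N/\ln N)$ correction once $N$ is large enough. This yields the required bound. The main obstacle is arithmetic: carefully tracking that the exponential correction from Lemma \ref{1DODE2}(i) is negligible, and that the specific algebraic choices of $R$, $s$, $A$, and $L$ leave exactly the slack $(\beta-\alpha)/2>0$ needed for the scaling to close.
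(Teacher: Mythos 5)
Your proposal is correct and follows essentially the same route as the paper: reduction to the decoupled 1D ODE in $x_{n+2}$, hypothesis from Lemma \ref{X2UC3-le-U-C1}, application of Lemma \ref{1DODE2}(i) with $K_n(T_n)=(AN^s)^{R^n}$, and the same arithmetic reduction to $L>AN^{1/R+s}P$ with slack $(\sqrt{2\alpha/7}-\alpha)/2>0$. The only cosmetic difference is that you show the exponential correction tends to $1$ outright, whereas the paper bounds it by $(A^cN^{cs})^{R^n}$ with $c<1/\ln A$ and absorbs it into the exponent comparison; both are fine.
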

\begin{proof}
Since $K_n(t) \le K_n(T_n) = (AN^s)^{R^n}$,
we have $K_n(t)^{-1}X\ge (AN^{1/R+s}P)^{-R^n}$. By \eqref{X2UC3-bound},
\[
\int_t^1 \frac{X^2\|\partial_{n+2}(U_n)_{n+2}(s)\|_{C^2}}6ds
\le \frac{C(1 - T_n)}{6P^{2R^n}}A^{R^{n-1}}
< cR^n\ln(AN^s). \tag{$c = 6C/P^2 < 1/\ln A$}
\]
Then the exponential of the integral is less than $(A^cN^{cs})^{R^n}$, so
\[
K_n(t)^{-1}Xe^{\int_t^1 -\frac{X^2\|\partial_{n+2}(U_n)_{n+2}(s)\|_{C^2}}6ds}
> (A^{1+c}N^{1/R+s+cs}P)^{-R^n}.
\]
We now check that $L > AN^{1/R+s}$. Indeed, via the logarithm it is equivalent to
$1 + \alpha - s - 2\sqrt{2\alpha/7} < \sqrt{2\alpha/7} + \sqrt{7\alpha/2} + s$.
After substituting $s$ and rearranging, this is equivalent to $\sqrt{2\alpha/7} > \alpha$,
which is true because $\alpha < 2/7$. As $A \to \infty$, $c < 1/\ln A \to 0$,
so the above inequality (which is really between the exponents) can be improved to
$L > A^{1+c}N^{1/R+s+cs}$. Also thanks to the margin in the exponent,
together with the fact that $P \lesssim \ln A$, the inequality can be further improved to
$L > A^{1+c}N^{1/R+s+cs}P$. Then on supp $\omega_n(\cdot, 1)$ we have,
\[
|x_{n+2}| \le L^{-R^n} < K_n(t)^{-1}Xe^{\int_t^1 -\frac{X^2\|\partial_{n+2}(U_n)_{n+2}(s)\|_{C^2}}6ds}.
\]
Then the claim follows from Lemma \ref{1DODE2} (i).
\end{proof}

Next we record a useful limit.
\begin{lem}\label{rar->0}
If $r > 1$ then $ra^r$ and $ra^r\ln a \to 0$ as $a \to 0+$, uniformly in $r$.
\end{lem}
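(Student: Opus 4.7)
The plan is to reduce both claims to evaluating the supremum over $r$ of one-parameter families and to check these suprema are achieved at the endpoint $r=1$ for all sufficiently small $a$. Since $a < 1$ makes $\ln a < 0$, treating $a$ as fixed and differentiating,
\[
\frac{d}{dr}(ra^r) = a^r(1 + r\ln a), \qquad \frac{d}{dr}(ra^r|\ln a|) = a^r|\ln a|(1 + r\ln a),
\]
so both derivatives have the common critical point $r_* = 1/|\ln a|$.

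Next, I observe that for $a < 1/e$ one has $|\ln a| > 1$, hence $r_* < 1$. Therefore on the interval $r \in (1, \infty)$ the factor $1 + r\ln a$ is strictly negative, making both $r \mapsto ra^r$ and $r \mapsto ra^r|\ln a|$ strictly decreasing. Consequently,
\[
\sup_{r > 1} ra^r \le a, \qquad \sup_{r > 1} ra^r|\ln a| \le a|\ln a|.
\]

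Finally, since $a \to 0+$ implies $a \to 0$ and $a|\ln a| \to 0$ (the latter by the standard limit $\lim_{a \to 0+}a\ln a = 0$), both suprema tend to 0, which is exactly the claim of uniform convergence in $r$. There is no real obstacle here; the only thing to be careful about is that $ra^r\ln a$ is negative, so ``$\to 0$'' means its absolute value tends to 0, and that is precisely what the second estimate controls.
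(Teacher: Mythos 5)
Your proof is correct and follows essentially the same route as the paper: differentiate in $r$, observe that $1 + r\ln a < 0$ for small $a$ so that both functions are decreasing in $r$ on the relevant range, and bound the supremum by the endpoint value. The only (minor) difference is in the second limit, where you exploit directly that $|\ln a|$ is constant in $r$ to get the clean bound $\sup_{r>1} ra^r|\ln a| \le a|\ln a| \to 0$, whereas the paper instead bounds $|\ln a| < 2a^{-r/2}$ and reduces to the first claim with exponent $r/2$; your version is slightly more direct.
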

\begin{proof}
We have $\partial_r(ra^r) = (1 + r\ln a)a^r < 0$ when $r > 1/2$ and $a < 1/e^2$.
Then $ra^r < \sqrt a/2 \to 0$ as $a \to 0+$, uniformly in $r > 1/2$. Moreover,
$|\ln a| = \ln(1/a) = 2\ln(1/\sqrt a) < 2/\sqrt a < 2/\sqrt{a^r}$, so
$|ra^r\ln a| < 2ra^{r/2} = 4(r/2)a^{r/2} \to 0$ as $a \to 0+$, uniformly in $r > 1$.
\end{proof}

Now we close the induction hypothesis for $n$.
\begin{lem}\label{induction}
(i) For $t \in [T_{n+1}^*, 1]$, $1 \le K_n(t) < 2$.

(ii) For $t \in [T_n, 1]$, $g_{n1}$ and $g_{n3} \in (1/2, 2)$.

(iii) For $t \in [T_{n+1}^*, 1]$, $|g_{n1}'|$, $|g_{n3}'|$ and $|g_{n2}''| < (N^2/L^R)^{R^{n-1}}$.

(iv) For $t \in [T_{n+1}^*, 1]$, $|g_{n1}''|$ and $|g_{n3}''| < N^{2R^{n-1}}$.

(v) For $t \in [T_n, 1]$, $1 \le G_n(t) \le K_n(t)$.
\end{lem}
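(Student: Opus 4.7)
The plan is to verify parts (i)--(v) in order, leveraging the definitions of $K_n(t)$ and $G_n(t)$ as exponentials of time integrals, the flow-map bounds of Lemmas \ref{1DODE}--\ref{3DPDE}, and the velocity-gradient bounds of Lemmas \ref{U-C1-bound} and \ref{U-C3-bound}. The recurring theme will be that prefactors of the form $A^{-R^{n-1}(R-1)}$ multiplying polynomial expressions in $n$ and $\ln N$ can be made arbitrarily small by Lemma \ref{rar->0} (applied with $a=A^{-(R-1)}$ and $r=R^{n-1}$; both $ra^r\to 0$ and $ra^r\ln(1/a)\to 0$ uniformly in $r$).

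For (i) and (v), I would work from the formulas $K_n(t) = \exp\int_t^1 -\partial_{n+2}(U_n)_{n+2}(0,0,0,s)\,ds$ and $G_n(t) = \exp\int_t^1 g_n(s)\,ds$. Both lower bounds are immediate since the integrands are non-negative (for $K_n$ by Lemma \ref{U-C1-bound}). For $K_n<2$, combining the integrand upper bound $(35/34)A^{R^{n-1}}$ with $1-T_{n+1}^*=34R^{n+1}A^{-R^n}\ln(AN^s)$ yields $\ln K_n(t)\le 35R^{n+1}A^{-R^{n-1}(R-1)}\ln(AN^s)$, which is less than $\ln 2$ via Lemma \ref{rar->0}. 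For $G_n\le K_n$ it suffices to show the pointwise inequality $g_n(s)\le -\partial_{n+2}(U_n)_{n+2}(0,0,0,s)$; using the identity $\alpha R=\sqrt{2\alpha/7}$ to rewrite $N^{\alpha R^n}=A^{R^{n-1}}$, together with the induction hypotheses $g_{n2}'(0)\in[1/2,1]$ and $g_{n3}(0)\in[1/2,2]$, one gets $g_n(s)\le 0.01\cdot 5^{\alpha/2}A^{R^{n-1}}$, dominated by the lower bound $A^{R^{n-1}}/34$ from Lemma \ref{U-C1-bound} thanks to the small coefficient $0.01$.

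For (ii), I would apply Lemma \ref{3DPDE} to express $|\ln g_{n3}|$ and $|\ln g_{n1}|$ as time integrals of $\sup|\partial_{n+3}(U_n)_{n+3}(0,\cdot,0,s)|$, with an extra $x_{n+2}^2\sup|\partial^3 U|/2$ term in the $g_{n1}$ estimate. Decomposing $\Omega_n=\sum_{k=0}^{n-1}\omega_k$, the dominant piece from $\omega_{n-1}$ has the feature that $\partial_{n+3}(\omega_{n-1}*\mathcal K)_{n+3}$ corresponds in the local coordinates of Section \ref{om} to $\partial_1(\omega*\mathcal K)_1$, the \emph{small direction} controlled by Lemma \ref{BS-C1}. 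This gives a bound $\lesssim A^{R^{n-1}}L^{R^{n-1}}M^{\delta-1}$ with $M=N^{R^{n-1}}$; multiplying by the interval length $1-T_n\sim R^nA^{-R^{n-1}}\ln(AN^s)$ and tracking $N$-exponents reduces matters to $s>\alpha-2\sqrt{2\alpha/7}$, equivalently (substituting $s$) to $(2-\alpha)/4>3\sqrt{\alpha/14}$, which holds comfortably for $\alpha<\alpha_0$. For $\omega_k$ with $k<n-1$, crude bounds $\|\omega_k*\mathcal K\|_{C^1}\lesssim A^{R^k}$ combine geometrically to $A^{R^{n-2}}$, contributing at most $R^nA^{-R^{n-2}(R-1)}\ln(AN^s)\to 0$ by Lemma \ref{rar->0}. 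The $x_{n+2}^2$-term in the $g_{n1}$ bound is handled by Lemma \ref{U-C3-bound} combined with $|x_{n+2}|\lesssim L^{-R^n}$, as explained below.

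Parts (iii) and (iv) are the main obstacle, as they require precise matching of exponents. For $g_{n2}''$ I would apply Lemma \ref{1DODE}(vii), and for $(\ln g_{n1})'$, $(\ln g_{n3})'$, $(\ln g_{n1})''$, $(\ln g_{n3})''$ the explicit bounds of Lemma \ref{3DPDE}; all of these involve $\sup|\partial^3 U_n|$ which by Lemma \ref{U-C3-bound} is $\le C(AN^2)^{R^{n-1}}$. The crucial observation is that the $|x_2|$-factor appearing in the first-derivative bounds should be bounded \emph{not} by the loose $X$ from Lemma \ref{phin-bound}, but by the actual $x_{n+2}$-support of $\omega_n$, which by part (i) is $|x_{n+2}|\le K_n(t)L^{-R^n}\le 2L^{-R^n}$. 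Multiplying by the short interval length $1-T_{n+1}^*=34R^{n+1}A^{-R^n}\ln(AN^s)$ and by $C(AN^2)^{R^{n-1}}$ yields the uniform bound
\[
|g_{n2}''|,\ |(\ln g_{n1})'|,\ |(\ln g_{n3})'|\lesssim R^{n+1}\ln(AN^s)\,A^{-R^{n-1}(R-1)}\,(N^2/L^R)^{R^{n-1}},
\]
and the prefactor is sub-unit by Lemma \ref{rar->0}. For (iv) the $|x_2|$-factor in Lemma \ref{3DPDE}'s second-derivative bounds is replaced by $1+\ln K_n(t)<2$ from (i), giving $|(\ln g_{n1})''|,\ |(\ln g_{n3})''|\lesssim R^{n+1}\ln(AN^s)\,A^{-R^{n-1}(R-1)}\,N^{2R^{n-1}}$, again within the target after absorbing the prefactor. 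Finally, converting $(\ln g)'$ to $g'$ uses $g\in(1/2,2)$ from (ii), while converting $(\ln g)''$ to $g''$ additionally requires $((\ln g)')^2\ll N^{2R^{n-1}}$; using the (iii) estimate this reduces to $L^R>N$, which translates to $R/2+\sqrt{2\alpha/7}/4>4/7$ and holds by the choice $R=\sqrt{2/(7\alpha)}$ for $\alpha<\alpha_0$.
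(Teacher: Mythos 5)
Your proposal is correct and follows essentially the same route as the paper: the same exponentiated\textendash integral formulas for $K_n$ and $G_n$, the same use of Lemma \ref{BS-C1} for the small velocity component $\partial_{n+3}(\omega_{n-1}*\mathcal K)_{n+3}$ in (ii), the same key observation in (iii)--(iv) that the $|x_2|$-factor must be taken from the actual support size rather than $X$, and the same absorption of the prefactors via Lemma \ref{rar->0}, including the $L^R>N$ check needed to pass from $(\ln g)''$ to $g''$. The only slip is in (ii): on $[T_n,T_{n+1}^*]$ the $x_{n+2}$-support of $\omega_n$ is only $\lesssim K_n(t)L^{-R^n}\le X$, not $\lesssim L^{-R^n}$ (since $K_n(t)$ can reach $(AN^s)^{R^n}$ there), but the $x_2^2\sup|\partial_2^3U_2|$ term is still under control with that substitution --- indeed the paper simply bounds it by $X^2\|\partial_{n+2}(U_n)_{n+2}\|_{C^2}$.
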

\begin{rem}
Recall that
\begin{align*}
1 - T_n &\in (34/35, 34)R^nA^{-R^{n-1}}\ln(AN^s),\\
1 - T_{n+1}^* &= 34R^{n+1}A^{-R^n}\ln(AN^s)
\end{align*}
so
\[
\frac{1 - T_{n+1}^*}{1 - T_n} < 35RA^{R^{n-1}-R^n} < 35RA^{1-R}
= 35R(A^{\frac{1-R}{R}})^R.
\]
Since $R > \sqrt2$, as $A \to +\infty$, $A^{\frac{1-R}{R}} \to 0+$, uniformly in $R$.
Then the right-hand side $\to 0$, also uniformly in $R$ by Lemma \ref{rar->0}.
In particular it can be made smaller than 1. Then $T_{n+1}^* > T_n$,
so (ii) and (v) also hold on $[T_{n+1}^*, 1]$, the time interval needed to recover the induction hypothesis.
\end{rem}
\begin{proof}
(i) For $t \in [T_{n+1}^*, 1]$ we have
\begin{align*}
0 &\le \ln K_n(t) = \int_t^1 -\partial_{n+2}(U_n)_{n+2}(0, 0, 0, s)ds \lesssim (1 - T_{n+1})A^{R^{n-1}}\\
&\lesssim R^{n+1}A^{R^{n-1}-R^n}\ln(AN^s) \lesssim R^{n+1}(A^{\frac{1-R}{R^2}})^{R^{n+1}}\ln A.
\end{align*}
Since $R > \sqrt2$, as $A \to +\infty$, $A^{\frac{1-R}{R^2}} \to 0+$, uniformly in $R$.
Then the right-hand side $\to 0$, also uniformly in $R$ by Lemma \ref{rar->0}.
In particular it can be made smaller than ln2.

(ii) By Lemma \ref{BS-C1}, for $0 \le k \le n - 1$ and $t \in [T_{k+1}^*, 1]$,
\[
|\nabla(\omega_k(t) * \mathcal K)_{k+1}|
\lesssim_\delta A^{R^k}L_1(M + B_0)M^{\delta-2}
\lesssim A^{R^k}L_1M^{\delta-1}.
\]
Since $AL_1 \le 2AL = N^{1+\alpha-s-\sqrt{2\alpha/7}} = N^{(2+\alpha)/4+2.5\sqrt{2\alpha/7}}$
with an exponent less than 1, taking $\delta$ sufficiently small we have that
$|\nabla(\omega_k(t) * \mathcal K)_{k+1}| \lesssim 1$.
In particular, this holds for $\partial_n(\omega_{n-1}(t) * \mathcal K)_n$
on the time interval $[T_n^*, 1]$. On the other hand,
\[
\sum_{k=0}^{n-2} \|\omega_k * \mathcal K\|_{C^1}
\lesssim \sum_{k=0}^{n-2} A^{R^k} \lesssim A^{R^{n-2}}
\]
so for $t \in [T_n^*, 1]$,
\[
|\partial_n(U_n(t))_n| = |\partial_n(\Omega_n(t) * \mathcal K)_n| \lesssim A^{R^{n-2}},
\]
\[
\int_t^1 |\partial_n(U_n)_n(s)|ds
\lesssim R^nA^{R^{n-2}-R^{n-1}}\ln(AN^s)\\
\lesssim R^n(A^{\frac{1-R}{R^2}})^{R^n}\ln A \to 0
\]
as $A \to \infty$, uniformly in $R$ and $n$ by Lemma \ref{rar->0}.
Also by \eqref{X2UC3-bound},
\begin{equation}\label{int-X2UC3->0}
\begin{aligned}
\int_t^1 \frac{X^2\|\partial_{n+2}(U_n)_{n+2}(s)\|_{C^2}}2ds
&\lesssim \frac{1 - T_n}{P^{2R^n}}A^{R^{n-1}}
\lesssim \frac{R^n\ln(AN^s)}{P^{2R^n}}\\
&\lesssim R^n(\ln A)^{1-2R^n}
\le R^n(\ln A)^{-R^n} \to 0
\end{aligned}
\end{equation}
as $A \to \infty$, uniformly in $R$ and $n$ by Lemma \ref{rar->0}.
Then by Lemma \ref{3DPDE}, when $A$ is sufficiently large,
$|\ln g_{n3}|$ and $|\ln g_{n1}| < \ln2$ for $t \in [T_n, 1]$,

(iii) By Lemma \ref{1DODE} (iv), for $t \in [T_{n+1}^*, 1]$ and $|x_2| \le X$,
\[
|\phi_n(x, t, 1)_{n+2}|
\ge K_n(t)^{-1}|x_{n+2}|\exp\int_t^1 -\frac{X^2\|\partial_{n+2}(U_n)_{n+2}(s)\|_{C^2}}6ds
\gtrsim |x_{n+2}|
\]
so for $x \in \supp\omega_n(\cdot, 1)$, $|\phi_n(x, 1, t)_{n+2}| \lesssim |x_{n+2}| \lesssim L^{R^{-n}}$.
Then by Lemma \ref{U-C3-bound}, for $i = 1, 2, 3$,
\[
|\phi_n(x, 1, t)_{n+2}|\|\partial_i(U_n)_i(t)\|_{C^2} \lesssim (AN^2/L^R)^{R^{n-1}}
\]
so for $t \in [T_{n+1}^*, 1]$,
\begin{align*}
|\phi_n(x, 1, t)_{n+2}|\int_t^1 \|\partial_i(U_n)_i(s)\|_{C^2}ds
&\lesssim R^{n+1}A^{R^{n-1}-R^n}\ln(AN^s)(N^2/L^R)^{R^{n-1}}\\
&< (N^2/L^R)^{R^{n-1}}/4
\end{align*}
when $A = N^{\sqrt{2\alpha/7}}$ is sufficiently large.
Then by Lemmas \ref{1DODE2} and \ref{3DPDE},
$|g_{n2}''|$, $|(\ln g_{n3})'|$ and $|(\ln g_{n1})'| < (N^2/L^R)^{R^{n-1}}/2$.
Since $|g_{n1}|$, $|g_{n3}| < 2$, we have $|g_{n3}'|$,
$|g_{n1}'| < (N^2/L^R)^{R^{n-1}}$ for $t \in [T_{n+1}^*, 1]$.

(iv) Similarly for $t \in [T_{n+1}^*, 1]$ and $i = 1, 2, 3$,
when $A = N^{\sqrt{2\alpha/7}}$ is sufficiently large,
\[
\int_t^1 \|\partial_i(U_n)_i(s)\|_{C^2}ds
\lesssim R^{n+1}A^{R^{n-1}-R^n}\ln(AN^s)N^{2R^{n-1}} < N^{2R^{n-1}}/8.
\]
Since $K_n(t) \le 2$, $1 + \ln K_n(t) \le 2$,
so by Lemma \ref{3DPDE}, $|(\ln g_{n3})''|$ and $|(\ln g_{n1})''| < N^{2R^{n-1}}/4$.
Then for $i = 1, 3$,
\[
|g_{ni}''/g_{ni}| \le |(\ln g_{ni})''| + (g_{ni}'/g_{ni})^2
= |(\ln g_{ni})''| + ((\ln g_{ni})')^2 < N^{2R^{n-1}}/2.
\]
Since $|g_{n1}|$ and $|g_{n3}| < 2$, we have $|g_{n3}''|$ and $|g_{n1}''| < N^{2R^{n-1}}$ for $t \in [T_{n+1}^*, 1]$.

(v) For $t \in [T_n, 1]$ we have $g_n(t) = 0.01N^{\alpha R^n}(g_{n2}'(0)^2 + g_{n3}(0)^2)^{\alpha/2} \ge 0$,
$g_n(t) < 0.01\sqrt[18]{5}N^{\alpha R^n} < N^{\alpha R^n}/68 = A^{R^{n-1}}/68 < -\partial_{n+2}(\Omega * \mathcal K)_{n+2}(0, 0, 0, t)/2$,
so integrating gives $1 \le G_n(t) \le K_n(t)$.
\end{proof}

\subsection{Estimating $\omega_n$}
By Lemma \ref{induction} (ii), $|\ln g_{n1}|$ and $|\ln g_{n3}| < \ln2$ for $t \in [T_n, 1]$.
Since $K_n(t) \le K_n(T_n) = (AN^s)^{R^n}$,
by Lemma \ref{1DODE} (iv) and \eqref{int-X2UC3->0}, for $|x_2| \le X$,
\begin{align*}
|\phi_n(x, t, 1)_{n+2}|
&\ge K_n(t)^{-1}|x_{n+2}|\exp\int_t^1 -\frac{X^2\|\partial_{n+2}(U_n)_{n+2}(s)\|_{C^2}}6ds\\
&\gtrsim K_n(t)^{-1}|x_{n+2}|
\end{align*}
so for $x \in \supp\omega_n(\cdot, 1)$, $|\phi_n(x, 1, t)_{n+2}| \lesssim K_n(t)L^{-R^n}$.
Then by Lemma \ref{U-C3-bound}, for $i = 1, 2, 3$,
\[
|\phi_n(x, 1, t)_{n+2}|\|\partial_i(U_n)_i(t)\|_{C^2} \lesssim K_n(t)(AN^2/L^R)^{R^{n-1}}.
\]
Since $1 - T_n \lesssim R^nA^{-R^{n-1}}\ln(AN^s) \lesssim R^nA^{-R^{n-1}}\ln N$,
\[
|\phi_n(x, 1, t)_{n+2}|\int_t^1 \|\partial_i(U_n)_i(s)\|_{C^2}ds
\lesssim K_n(t)R^n(N^2/L^R)^{R^{n-1}}\ln N.
\]
Since $X^2\|\partial_{n+2}(U_n)_{n+2}(t)\|_{C^2} < -\partial_{n+2}(U_n)_{n+2}(0, 0, 0, t)$ and
$|\phi_n(x, 1, t)_{n+2}| < X$, Lemma \ref{1DODE2} and \ref{3DPDE} apply to give
\[
|g_{n2}''|, |(\ln g_{n1})'|, |(\ln g_{n3})'|
\lesssim K_n(t)R^n(N^2/L^R)^{R^{n-1}}\ln N.
\]
Since $|g_{n1}|, |g_{n3}| \le 2$, $|g_{n1}'|$ and $|g_{n3}'|$ have the same bound.
Also, for $i = 1, 2, 3$,
\[
\int_t^1 \|\partial_i(U_n)_i(s)\|_{C^2}ds
\lesssim R^nN^{2R^{n-1}}\ln N
\]
so, using Lemma \ref{3DPDE} and $K_n(t) \le (AN^s)^{R^n} = N^{(\sqrt{2\alpha/7}+s)R^n} < N^{(3\alpha+2)R^n/4} < N^{R^n}$,
\[
|(\ln g_{n1})''|, |(\ln g_{n3})''|
\lesssim (1 + \ln K_n(t))R^nN^{2R^{n-1}}\ln N
\lesssim R^nN^{2R^{n-1}}\ln^2N.
\]

When $t \in [T_n, 1]$, for $\omega_n$ we have $M = N^{R^n}$, $L_2 = L_3 = L^{R^n}$, $L_1 = L^{R^n}K_n(t)$.
Then
\begin{align*}
B_0 &= \sum_{i=1}^3 L_i + L_3\ln M + \sup|g_1'| + ML_3^{-1}(\sup|g_3'| + \sup|g_2''|)\\
&\lesssim K_n(t)L^{R^n}\ln M + K_n(t)(N^{1+2/R}/L^2)^{R^n}\ln M\\
&\lesssim K_n(t)L^{R^n}\ln M
\end{align*}
because the first term dominates as before.

Since $L = N^{1+\alpha-s-2\sqrt{2\alpha/7}} = N^{1-s+\alpha-\sqrt{2\alpha/7}}/A$, we have
\[
K_n(t)L^{R^n} \le (ALN^s)^{R^n} = N^{(1+\alpha-\sqrt{2\alpha/7})R^n} = M^{1+\alpha-\sqrt{2\alpha/7}}.
\]
Moreover $\alpha < 2/7$, so Lemma \ref{BS-C01} and Lemma \ref{BS-C12} apply and $B_0 \lesssim M$. Then
\begin{align*}
B_1 &= B_0(M + B_0) + \sup|g_1''| + ML_3^{-1}\sup|g_3''|\\
&\lesssim MB_0 + (N/L)^{R^n}N^{2R^{n-1}}\ln^2M\\
&\lesssim K_n(t)ML^{R^n}\ln M + L^{2R^n}\ln^2M \le 2K_n(t)ML^{R^n}\ln M
\le 2M^2\ln M
\end{align*}
because $K_n(t) \ge 1$ and $L^{R^n}\ln^2M < M^{5/6}\ln^2M < M$ when $M \gg 1$.

\subsection{Estimating the force}
In our construction, the vorticity is an infinite sum whose terms are turned on at different time scales:
\[
\omega(x, t) = \sum_{n=0}^\infty \rho_n(t)\omega_n(x, t).
\]
where $\rho_n$ is supported on $[T_n, 1]$, has values in [0, 1], and is a smooth approximation of $1_{[T_n,1]}$ in $L^1$.
By \eqref{om-n-xt}, each $\omega_n$ is smooth in $(x, t)$. Since
\[
1 - T_n < 34R^nA^{-R^{n-1}}\ln(AN^s) \to 0 \tag{$n \to \infty$}
\]
on any compact subset of $[0, 1)$, only finitely many $\omega_n$ is non-zero,
so the whole sum is smooth in $(x, t) \in \mathbb R^3 \times [0, 1)$, with
\[
\partial_t\omega(x, t) = \sum_{n=0}^\infty (\rho_n'(t)\omega_n(x, t) + \rho_n(t)\partial_t\omega_n(x, t))
\]
so
\[
\partial_t\omega + (\omega * \mathcal K)\cdot\nabla\omega
- \omega\cdot\nabla(\omega * \mathcal K) - 0.01|\nabla|^\alpha\omega
= \sum_{n=0}^\infty F_n
\]
where the sum is smooth in $(x, t) \in \mathbb R^3 \times [0, 1)$, and the summand
\begin{align*}
F_n &= \rho_n(t)F_n^\circ(x, t) + \rho_n'(t)\omega_n(x, t),\\
F_n^\circ &= (\omega_n * \mathcal K)\cdot\nabla\omega_n
- \omega_n\cdot\nabla(\omega_n * \mathcal K)\\
&+ (\omega_n * \mathcal K)\cdot\nabla\Omega_n
- \Omega_n\cdot\nabla(\omega_n * \mathcal K)\\
&+ (\Omega_n * \mathcal K - U_n)\cdot\nabla\omega_n
- \omega_n\cdot\nabla(\Omega_n * \mathcal K - U_n)\\
&+ (g_n(t)\omega_n - 0.01|\nabla|^\alpha)\omega_n\\
&+ \partial_t\omega_n + U_n\cdot\nabla\omega_n - \omega_n\cdot\nabla U_n - g_n(t)\omega_n
\tag{= 0 by construction}
\end{align*}
will be controlled in the $C^r$ norm, where $r \in (0, s)$,
which is possible because $s > 0$:
\[
\|F_n\|_{L_t^1C_x^r} \le \|\rho_n(t)F_n^\circ\|_{L_t^1C_x^r} + \|\rho_n'(t)\omega_n\|_{L_t^1C_x^r}.
\]
Since $\rho_n$ is supported on $[T_n, 1]$ and has values in [0, 1],
\[
\|\rho_n(t)F_n^\circ\|_{L_t^1C_x^r} \le \int_{T_n}^1 \|F_n^\circ(t)\|_{C^r}dt.
\]
Since $\omega_n$ is continuous in $t \in [T_n, 1]$,
\[
\|\rho_n'(t)\omega_n\|_{L_t^1C_x^r} \to \|\omega_n(\cdot, T_n)\|_{C^r}
\]
as $\rho_n$ is taken to be arbitrarily close to $1_{[T_n,1]}$ in $L^1$.
Thus to show that $\sum_{n=0}^\infty F_n$ is finite in $L_t^1C_x^r$ norm,
it suffices to show the finiteness of the sums $\sum_{n=0}^\infty \|\omega_n(\cdot, T_n)\|_{C^r}$ and $\sum_{n=0}^\infty \int_{T_n}^1 \|F_n^\circ(t)\|_{C^r}dt$.
The former is shown in Section \ref{wn-Tn}, while the latter, itself containing multiple terms, are shown in Sections \ref{self-interact} to \ref{dissipate}.

\subsubsection{The term $\omega_n(T_n)$}\label{wn-Tn}
For $t \in [T_n, 1]$ we have $\sup|\omega_n(\cdot, t)| \lesssim K_n(t)^{-1}A^{R^n}$ and
\[
\|\omega_n(\cdot, t)\\|_{C^1} \lesssim K_n(t)^{-1}A^{R^n}(M + B_0)
\lesssim K_n(t)^{-1}A^{R^n}M
\]
so for $r \in (0, s)$, $\|\omega_n\|_{C^r} \lesssim K_n(t)^{-1}A^{R^n}M^r$.
In particular, since
\[
K_n(T_n) = \exp\int_{T_n}^1 -\partial_{n+2}U_{n+2}(0, 0, 0, s)ds = (AN^s)^{R^n} = A^{R^n}M^s,
\]
$\|\omega_n(T_n)\|_{C^r} \lesssim M^{r-s} = N^{(r-s)R^n}$,
so, using $r - s < 0$,
\[
\sum_{n=0}^\infty \|\omega_n(\cdot, T_n)\|_{C^r} \lesssim \infty.
\]

\subsubsection{The self-interaction}\label{self-interact}
By Lemma \ref{quadratic}, for $r \in (0, s)$,
\begin{align*}
\|(\omega_n * \mathcal K) \cdot \nabla\omega_n\|_{C^r}
&+ \|\omega_n \cdot \nabla(\omega_n * \mathcal K)\|_{C^r}\\
&\lesssim_\delta K_n(t)^{-2}A^{2R^n}B_0^{1-r}B_1^rM^{-2+2\delta}(M + B_0)\\
&\lesssim K_n(t)^{-2}A^{2R^n}B_0^{1-r}B_1^rM^{-1+2\delta}\\
&\lesssim A^{2R^n}L^{R^n}M^{r-1+2\delta}\ln M
\end{align*}
where in the last line we used $B_j \lesssim K_n(t)M^jL^{R^n}\ln M$ ($j = 0, 1$), so
\begin{align*}
\int_{T_n}^1 (\|(\omega_n * \mathcal K) \cdot \nabla\omega_n\|_{C^r}
&+ \|\omega_n \cdot \nabla(\omega_n * \mathcal K)\|_{C^r})dt\\
&\lesssim_\delta (1 - T_n)A^{2R^n}L^{R^n}M^{r-1+2\delta}\ln M\\
&\lesssim A^{2R^n-R^{n-1}}L^{R^n}M^{r-1+2\delta}\ln^2M\\
&\lesssim_\delta A^{2R^n-R^{n-1}}L^{R^n}M^{r-1+3\delta}\\
&= (N^{2\sqrt{2\alpha/7}-\alpha+r-1+3\delta}L)^{R^n}
= N^{(r+3\delta-s)R^n}
\end{align*}
because $L = N^{1+\alpha-s-2\sqrt{2\alpha/7}}$. Since $r < s$,
we can let $\delta = (s - r)/4 > 0$ so that
the integral is $O(a^{R^n})$ for some $a \in (0, 1)$, so
\[
\sum_{n=0}^\infty \int_{T_n}^1 (\|(\omega_n * \mathcal K) \cdot \nabla\omega_n\|_{C^r}
+ \|\omega_n \cdot \nabla(\omega_n * \mathcal K)\|_{C^r})dt < \infty.
\]

\subsubsection{The interaction between inner velocity and outer vorticity}
For $t \in [T_n, 1]$ and $k = 0, \dots, n - 1$, $K_k(t) \in [1, 2]$, so
$\|\omega_k\|_{C^{j}} \lesssim (AN^j)^{R^k}$ ($j = 0, 1, 2$).
Summing over $k$ we get $\|\Omega_n\|_{C^{j}} \lesssim (AN^j)^{R^{n-1}}$ ($j = 0, 1, 2$).
Then by Lemma \ref{inner-interaction}, for $j = 0, 1$,
\begin{align*}
\|(\omega_n * \mathcal K) \cdot \nabla\Omega_n\|_{C^j}
&\lesssim_\delta K_n(t)^{-1}A^{R^n}(M + B_0)\sum_{j_1+j_2=j}M^{(j_1-2)(1-\delta)}\|\Omega_n\|_{C^{j_2+1}}\\
&\lesssim_\delta A^{R^n}M\sum_{j_1+j_2=j}M^{(j_1-2)(1-\delta)}(AN^{j_2+1})^{R^{n-1}}
\end{align*}
where we have used $B_0 \lesssim M$ and $K_n(t) \ge 1$.
Since $N^{R^{n-1}} = M^{1/R} \le M^{1-\delta}$,
the sum is dominated by the summand with $j_1 = j$ and $j_2 = 0$, so
\[
\|(\omega_n * \mathcal K) \cdot \nabla\Omega_n\|_{C^j}
\lesssim_\delta M^{1+(j-2)(1-\delta)}(A^{1+R}N)^{R^{n-1}}.
\]
By interpolation, for $r \in (0, s)$,
\[
\|(\omega_n * \mathcal K) \cdot \nabla\Omega_n\|_{C^r}
\lesssim_\delta M^{1+(r-2)(1-\delta)}(A^{1+R}N)^{R^{n-1}}.
\]

Since for $t \in [T_n, 1]$, $g_{n1}$ and $g_{n3} > 1/2$, for $x \in \supp\omega_n$ we have
$|x_{n+1}|$ and $|x_{n+3}| < 2L^{-R^n}$.
Also $|x_2| = |\phi_n(\phi_n(x, t, 1), 1, t)_{n+2}| \lesssim K_n(t)L^{-R^n}$,
so $D = \max\{|x|: x \in \supp\omega_n(\cdot, t)\} \lesssim K_n(t)L^{-R^n}$.
Then by Lemma \ref{inner-interaction},
\begin{align*}
\|\Omega_n \cdot \nabla(\omega_n * \mathcal K)\|_{C^r}
&\lesssim_\delta K_n(t)^{-1}A^{R^n}M^{1+(r-1)(1-\delta)}D\|\Omega_n\|_{C^1}\\
&\lesssim A^{R^n}M^{1+(r-1)(1-\delta)}(1/L^{R^n})(AN)^{R^{n-1}}\ln M\\
&\lesssim_\delta M^{r+2\delta}L^{-R^n}(A^{1+R}N)^{R^{n-1}}
\end{align*}
dominating the bound above because $L \le N$. Then
\begin{align*}
\int_{T_n}^1 (\|(\omega_n * \mathcal K) \cdot \nabla\Omega_n\|_{C^r}
&+ \|\Omega_n \cdot \nabla(\omega_n * \mathcal K)\|_{C^r})dt\\
&\lesssim_\delta (1 - T_n)M^{r+2\delta}L^{-R^n}(A^{1+R}N)^{R^{n-1}}\\
&\lesssim M^{r+2\delta}(A/L)^{R^n}N^{R^{n-1}}\ln M\\
&\lesssim_\delta M^{r+3\delta}(A/L)^{R^n}N^{R^{n-1}} < M^{r-s+3\delta}
\end{align*}
because $L > AN^{1/R+s}$.
Since $r < s$, we can choose $\delta > 0$ so that $r - s + 3\delta < 0$, and then
\[
\sum_{n=0}^\infty \int_{T_n}^1 (\|(\omega_n * \mathcal K) \cdot \nabla\Omega_n\|_{C^r}
+ \|\Omega_n \cdot \nabla(\omega_n * \mathcal K)\|_{C^r})dt < \infty.
\]

\subsubsection{The interaction between outer velocity and inner vorticity}
By Lemma \ref{outer-interaction},
\begin{align*}
\|(\Omega_n * \mathcal K - U_n) \cdot \nabla\omega_n\|_{C^r}
&+ \|\omega_n \cdot \nabla(\Omega_n * \mathcal K - U_n)\|_{C^r}\\
&\lesssim_\delta K_n(t)^{-1}A^{R^n}(M^{r+1} + B_0^{1-r}B_1^r)\\
&\times (1/L_1 + 1/L_3)^2D^{1-\delta}\|\Omega\|_{C^2}\\
&\lesssim A^{R^n}M^{r+1}L^{-(3-\delta)R^n}(AN^2)^{R^{n-1}}.
\end{align*}
Since $1 - T_n \lesssim A^{-R^{n-1}}\ln M$,
\begin{align*}
\int_{T_n}^1 (\|(\Omega_n * \mathcal K - U_n) \cdot \nabla\omega_n\|_{C^r}
&+ \|\omega_n \cdot \nabla(\Omega_n * \mathcal K - U_n)\|_{C^r})dt\\
&\lesssim A^{R^n}M^{r+1}L^{-(3-\delta)R^n}N^{2R^{n-1}}\ln M\\
&= (N^{8\sqrt{2\alpha/7}+r+1}L^{-(3-\delta)})^{R^n}\ln M.
\end{align*}
When $\delta \to 0$, the base tends to
\begin{align*}
N^{8\sqrt{2\alpha/7}+r+1}L^{-3}
&= N^{8\sqrt{2\alpha/7}+r+1-3(1+\alpha-s-2\sqrt{2\alpha/7})}\\
&= N^{2\sqrt{14\alpha}-2-3\alpha+r+3s}.
\end{align*}
Since $r \in (0, s)$, $r + 3s < 4s = 3\alpha + 2 - 2\sqrt{14\alpha}$,
so the exponent is negative, so as before
\[
\sum_{n=0}^\infty \int_{T_n}^1 (\|(\Omega_n * \mathcal K - U_n) \cdot \nabla\omega_n\|_{C^r}
+ \|\omega_n \cdot \nabla(\Omega_n * \mathcal K - U_n)\|_{C^r})dt < \infty.
\]

\subsubsection{The dissipation}\label{dissipate}
For $x \in \supp\omega_n(\cdot, t)$, we have $\phi_n(x, t, 1) \in \supp\omega_n(\cdot, 1)$,
so
\[
|x_{n+2}| = |\phi_n(\phi_n(x, t, 1), 1, t)_{n+2}| \lesssim K_n(t)L^{-R^n}.
\]
Also $|g_{n2}''|$ and $|g_{n3}'| \lesssim K_n(t)(N^2/L^R)^{R^{n-1}}\ln M$.
Then by Corollary \ref{nabla-alpha-Cs},
\begin{align*}
\||\nabla|^{\alpha}\omega_1 &- M^\alpha(g_2'(0)^2 + g_3(0)^2)^{\alpha/2}|\nabla|\omega_1\|_{C^r}\\
&\lesssim_{r,s,\alpha,\delta} BM^{\alpha+(s-1)(1-\delta)}
+ M^{\alpha+s}|x_2|\sup(|g_2''| + |g_3'|)\\
&\lesssim BM^{\alpha+(s-1)(1-\delta)}
+ M^{\alpha+s}K_n(t)^2(N/L^R)^{2R^{n-1}}\ln M
\end{align*}
Now we specialize to $a = 1/g_1$, so $\sup|a'| \lesssim \sup|g_1'|$,
and the respective bound has $B$ replaced by $B_0 = B$ without the term $\sup|a'|$.
Since $B_0 \lesssim K_n(t)L^{R^n}\ln M$,
\begin{align*}
\||\nabla|^\alpha\omega_1 &- M^\alpha(g_2'(0)^2 + g_3(0)^2)^{\alpha/2}\omega_1\|_{C^r}\\
&\lesssim_{r,s,\alpha,\delta} K_n(t)L^{R^n}(\ln M)M^{\alpha+(s-1)(1-\delta)}\\
&+ M^{\alpha+(r+s)/2}K_n(t)^2(N/L^R)^{2R^{n-1}}\ln M.
\end{align*}
As before the corresponding bound for $\omega_3$ is better, so for $t \in [T_n, 1]$,
taking into account the amplitude of $\omega_n$,
\begin{align*}
\|0.01|\nabla|^\alpha\omega_n &- g_n(t)\omega_n\|_{C^r}\\
&\lesssim_{r,s,\alpha,\delta} (AL)^{R^n}(\ln M)M^{\alpha+(s-1)(1-\delta)}\\
&+ A^{2R^n}M^{\alpha+(r+3s)/2}(N/L^R)^{2R^{n-1}}\ln M \tag{$K_n(t) \le (AN^s)^{R^n}$}\\
&= M^{\sqrt{2\alpha/7}+1+\alpha-s-2\sqrt{2\alpha/7}+\alpha+(s-1)(1-\delta)}\ln M\\
&+ M^{2\sqrt{2\alpha/7}+\alpha+(r+3s)/2+2\sqrt{7\alpha/2}-2(1+\alpha-s-2\sqrt{2\alpha/7})}\ln M\\
&= M^{2\alpha-\sqrt{2\alpha/7}+(1-s)\delta}\ln M.
\end{align*}
Since $1 - T_n \lesssim A^{-R^{n-1}}\ln M = M^{-\alpha}\ln M$,
\begin{align*}
\int_{T_n}^1 \|0.01|\nabla|^\alpha\omega_n - g_n(t)\omega_n\|_{C^r}dt
&\lesssim_{r,s,\alpha,\delta} M^{\alpha-\sqrt{2\alpha/7}+(1-s)\delta}\ln M.
\end{align*}
Since $\alpha < 2/7$, when $\delta$ is sufficiently small, the exponent is negative, so
\[
\sum_{n=0}^\infty \int_{T_n}^1 \|0.01|\nabla|^\alpha\omega_n - g_n(t)\omega_n\|_{C^r}dt < \infty.
\]

Combining the bounds above, we get
\[
\sum_{n=0}^\infty \|F_n\|_{L_1^tC_x^r}
\le \sum_{n=0}^\infty \int_{T_n}^1 \|G_n(t)\|_{C^r}dt + \sum_{n=0}^\infty \|w_n(T_n)\|_{C^r}
< \infty.
\]

\subsection{Proof of Theorem \ref{thm}}
In the above, for $0 \le \alpha < \frac19(22 - 8\sqrt7)$ we have constructed a divergence free
$\omega \in C^\infty(\mathbb R^3 \times [0, 1))$ supported in $B(1) \times [0, 1)$ such that
\[
\partial_t\omega + u \cdot \nabla\omega + \nu|\nabla|^\alpha\omega = \omega \cdot \nabla u + F
\]
where $\nu = 0.01$, $F \in C^\infty(\mathbb R^3 \times [0, 1)) \cap L_t^1([0,T])C_x^r$ for some $r > 0$, and
\[
u(x, t) = \frac1{4\pi}\int_{\mathbb R^3} \frac{\omega(y, t) \times (x - y)}{|x - y|^3}dy
\]
satisfies $\nabla \cdot u = 0$ and $\nabla \times u = \omega$. Now we do a Hodge decomposition
\[
\partial_tu + u \cdot \nabla u + \nu|\nabla|^\alpha u = \nabla p + f
\]
where $\nabla \cdot f = 0$. Then
\[
\partial_t\omega + u \cdot \nabla\omega + \nu|\nabla|^\alpha\omega = \omega \cdot \nabla u + \nabla \times f
\]
so $\nabla \times f = F$. Since $\nabla \cdot f = 0$, $f$ can be recovered from $F$ using the Biot--Savart law:
\[
f(x, t) = \frac1{4\pi}\int_{\mathbb R^3} \frac{F(y, t) \times (x - y)}{|x - y|^3}dy.
\]
Since $F$ is compactly supported in $\mathbb R^3 \times [0, 1]$ and smooth in $x$ for $t < 1$,
$f$ is square integrable and smooth in $x$ for $t < 1$.
Since $F \in L_t^1([0,1])C_x^r$, by Schauder estimates $f \in L_t^1([0,1])C_x^{1,r}$.

\section*{Acknowledgements}
This work is supported in part by the Spanish Ministry of Science
and Innovation, through the “Severo Ochoa Programme for Centres of Excellence in R$\&$D (CEX2019-000904-S \& CEX2023-001347-S)” and 114703GB-100. We were also partially supported by the ERC Advanced Grant 788250, and by the SNF grant FLUTURA: Fluids, Turbulence, Advection No. 212573.

\bibliographystyle{alpha}

\end{document}